
\documentclass[11pt]{article}
\usepackage{bbm}
\usepackage{psfrag,epic,eepic}
\usepackage{fullpage,color}
\usepackage[applemac]{inputenc} 
\usepackage{amsmath,amsfonts,amssymb,amsthm,mathrsfs}
\usepackage[a4paper,vmargin={3.5cm,3.5cm},hmargin={2.5cm,2.5cm}]{geometry}
\usepackage[font=sf, labelfont={sf,bf}, margin=1cm]{caption}
\usepackage{graphicx,graphics}
\usepackage{latexsym}
\usepackage{ae,aecompl}
\usepackage[english]{babel}
 \usepackage[colorlinks=true]{hyperref}
\usepackage{enumerate}
\usepackage[normalem]{ulem}
\usepackage{xcolor}
\usepackage[framemethod=tikz]{mdframed}
\usepackage{tcolorbox}
\usepackage{pstricks}

\definecolor{gris25}{gray}{0.75}

\headheight=0cm
\oddsidemargin=-0.7cm
\evensidemargin=0cm
\textwidth=17.5cm
\textheight=22.5cm
\marginparsep=0cm
\marginparwidth=0cm
\topskip=0cm
\pagestyle{plain}
\topmargin=0.5cm
\setlength{\parskip}{3pt}

\definecolor{mycolor}{rgb}{0, 0, 0.1}

\newmdenv[innerlinewidth=0.5pt, roundcorner=4pt,linecolor=mycolor,innerleftmargin=6pt,
innerrightmargin=6pt,innertopmargin=6pt,innerbottommargin=6pt]{mybox}

\newcommand{\E}{\mathbb{E}}

\newcommand{\F}{\ensuremath{\mathcal{F}}}
\newcommand{\G}{\ensuremath{\mathcal{G}}}

\newcommand{\e}{{\mathrm e}}
\newcommand{\R}{{\mathbb{R}}}

\newcommand{\N}{\mathbb{N}}
\newcommand{\D}{\mathcal{D}}
\newcommand{\Z}{\mathbb{Z}}

\newcommand{\veps}{\varepsilon}

\newcommand{\pr}{\mathbb{P}}
\newcommand\numberthis{\addtocounter{equation}{1}\tag{\theequation}}

\renewcommand{\theequation}{\arabic{equation}}

\newtheorem{thm}{Theorem}[section]
\newtheorem{defn}[thm]{Definition}
\newtheorem{lem}[thm]{Lemma}
\newtheorem{prop}[thm]{Proposition}

\setcounter{tocdepth}{2}

\linespread{1.08}

\date{}

\title{\bf{\textsc{Bivariate Markov chains converging to \\ Lamperti transform Markov Additive Processes}}}

\author{B\'en\'edicte Haas\thanks{ Universit\'e Paris 13,  Sorbonne Paris Cit\'e, LAGA, CNRS UMR 7539, 93430 Villetaneuse, haas@math.univ-paris13.fr}  \quad \& \hspace{0.2cm} Robin Stephenson\thanks{NYU--ECNU Institute of Mathematical Sciences at NYU Shanghai, robin.stephenson@normalesup.org \newline \text{} \quad  This work is partially supported by the ANR GRAAL ANR--14--CE25--0014.}}

\begin{document}

\maketitle

\begin{abstract}
Motivated by various applications, we describe the scaling limits of bivariate Markov chains $(X,J)$ on $\mathbb Z_+ \times \{1,\ldots,\kappa\}$ where $X$ can be viewed as a position marginal and $\{1,\ldots,\kappa\}$ is a set of $\kappa$ types. The chain starts from an initial value $(n,i)\in \mathbb Z_+ \times \{1,\ldots,\kappa\}$, with $i$ fixed and $n \rightarrow \infty$, and typically we will assume that the macroscopic jumps of the marginal $X$ are rare, i.e. arrive with a probability proportional to a negative power of the current state. We also assume that $X$ is non--increasing. We then observe different asymptotic regimes according to whether the rate of type change is proportional to, faster than, or slower than the macroscopic jump rate. In these different situations, we obtain in the scaling limit Lamperti transforms of Markov additive processes, that sometimes reduce to standard positive self--similar Markov processes. 
As first examples of applications, we study the number of collisions in coalescents in varying environment and the scaling limits of Markov random walks with a barrier. This completes previous results obtained by Haas and Miermont \cite{HM11} and Bertoin and Kortchemski \cite{BK14} in the monotype setting.
In a companion paper, we will use these results as a building block to study the scaling limits of multi--type Markov branching trees, with applications to growing models of random trees and multi--type Galton--Watson trees.
\end{abstract}


\section*{Introduction}


\fontsize{10.75}{12.9}\selectfont 

Let $\left((X(k),J(k)),k\geq0\right)$ be a Markov chain on $\mathbb Z_+ \times \{1,\ldots,\kappa\}$ for some integer $\kappa\geq 2$, with transition probabilities $p_{n,i}(m,j)$ such that the first component $(X(k),k \geq 0)$ is non--increasing, i.e.
$$
p_{n,i}(m,j)=0,  \quad \text{when } m>n.
$$
We view the marginal $X$ as the position component of the chain, and $J$ as its type. When the process starts from $(n,i)\in \mathbb Z_+ \times \{1,\ldots,\kappa\}$, we will refer to it as $(X_n^{(i)},J_n^{(i)})$. Since the process $X_n^{(i)}$ is non--increasing and $\mathbb Z_+$--valued, it is absorbed after a finite (random) time, denoted by $A_n^{(i)}$.
Our goal is to give conditions on the transition probabilities under which a suitable rescaling of the process 
\begin{equation}
\label{Pr}
\left(\left(X^{(i)}_n(\lfloor t \rfloor),t\geq 0\right), A^{(i)}_n\right) 
\end{equation}
has a non--trivial limit as $n \rightarrow \infty$ (for all $i$). 

\bigskip

This question has already been studied in the monotype setting ($\kappa=1$), see \cite{HM11} for the non--increasing case and \cite{BK14} for more general cases. Several applications to random walks with a barrier, Bessel--type random walks, exchangeable coalescence or fragmentation--coalescence processes, random trees and random planar maps have since then been developed (\cite{HM11, BK14, HM12, BCK15}).  Still in the monotype setting, there is also a series of papers describing the behavior of the absorption time only, under various assumptions on the transition probabilities, see e.g. \cite{IM08} and \cite{AM15} and the references therein. 

\vspace{0.01cm}

Our goal is to extend the results of \cite{HM11} to the multi--type setting. With this, we aim at developing new applications. One important application will concern the description of the scaling limits of multi--type Markov branching trees, with in turn applications to growing models of random trees and multi--type Galton--Watson trees. See the forthcoming work \cite{HS15}. Roughly, a family of random trees is said to satisfy the Markov branching property if, for each tree of the family, the subtrees above a given height are independent, with distributions that depend only on their sizes. This property arises in several natural situations, see e.g. \cite{Ald96, BerFire, BDMcS08, HMPW08, HM12, HS14} and the survey \cite{HSurvey16}. 
It turns out that multi--type versions of such families also arise naturally, with strong connections with multi--type fragmentation processes as developed in \cite{Ber08}. This is the topic of the forthcoming work \cite{HS15}. At the end of the present paper, we develop other applications of the asymptotic study of (\ref{Pr}), to models of coalescents in varying environment and to multi--type random walks with a barrier.

\bigskip

Let us  now briefly recall the results of \cite{HM11}. In that case we remove any notation referring to the type and denote $(p_n(k))$ the  transitions probabilities of the chain $(X_n)$. When $(X_n)$ is non--increasing, it has been shown that if for all continuous functions $f:[0,1]\rightarrow \mathbb R$
\begin{equation}
\label{HypMonotype}
n^{\gamma} \times \sum_{k\geq 0}f\left(\frac{k}{n}\right)\left(1-\frac{k}{n}\right)p_n(k) \underset{n\rightarrow \infty}\longrightarrow \int_{[0,1]}f(x)\mu(\mathrm dx)
\end{equation} 
for some $\gamma>0$ and some non--zero, finite, non--negative measure $\mu$ on $[0,1]$,  
then
\[
\left(\frac{X_n(\lfloor n^{\gamma}t \rfloor)}{n},t\geq 0\right) \overset{\mathrm{(d)}}{\underset{n\rightarrow \infty}\longrightarrow} X_{\infty}
\]
for the Skorokhod topology on the space of c\`adl\`ag functions from $[0,\infty)$ to $[0,\infty)$, 
where $X_{\infty}$ is a positive $\gamma$--self--similar Markov process which is absorbed at 0 in finite time. Note that (\ref{HypMonotype}) means that starting from $n$, the probability to do a jump larger than $\varepsilon n$ is of order $c_{\varepsilon}n^{-\gamma}$, where $c_{\varepsilon}$ increases as $\varepsilon$ decreases, and possibly tends to $+\infty$ as $\varepsilon$ tends to 0. Background on positive self--similar Markov process will be recalled in Section \ref{LampertiMAP}, in particular their connection to L\'evy processes via the Lamperti transformation. Roughly, the point is that any positive self--similar Markov process can be written as the exponential of a time--changed L\'evy process. For our process $X_{\infty}$ the time--change is guided by the parameter $\gamma$ and the L\'evy process is the negative of a subordinator with Laplace transform
\[
\psi(\lambda)=\mu(\{0\})+\mu(\{1\})\lambda+\int_{(0,1)}\big(1-x^{\lambda}\big)\frac{\mu(\mathrm dx)}{1-x}.
\]
It is also known from \cite{HM11}, that jointly with the previous convergence, the absorption time 
\[A_n=\inf \big\{k:X_n(j)=X_n(k), \forall j \geq k \big\}
\]
satisfies
\[
\frac{A_n}{n^{\gamma}} \ \overset{\mathrm{(d)}}{\underset{n\rightarrow \infty}\longrightarrow} \ \inf\{t \geq 0:X_{\infty}(t)=0\},
\] 
and that this limit is distributed as
$$
\int_0^{\infty}\exp(-\gamma \xi_r) \mathrm d r,
$$ 
where $\xi$ is a subordinator with the above Laplace transform $\psi$. Bertoin and Kortchemski have in \cite{BK14} extended these results to non--monotone chains. They obtain similarly  positive (non--monotone) self--similar Markov processes in the scaling limit.  

\bigskip

Coming back to the multi--type setting,  we focus here on \textbf{non--increasing} chains $X$. This is to simplify our approach,   but we believe that
similar results may hold in a non--monotone framework. 
In order to describe the scaling limits of (\ref{Pr}), we will need Lamperti transform Markov additive processes (MAP), as a generalization of Lamperti transform L\'evy processes. 
In general, a MAP is a Markov process $\left((\xi_t,K_t),t \geq 0 \right)$ taking values in $\mathbb R \times \{1,\ldots,\kappa\}$ for some integer $\kappa \geq 1$ such that for all $t \in \mathbb R_+$ and all $(x,i) \in \mathbb R \times \{1,\ldots,\kappa\}$ 
\[
\big(\left(\xi_{t+s}-\xi_t,K_{t+s}),s \geq 0\right) \ | \ (\xi_u,K_u),u\leq t \big) \text{ under } \mathbb P_{(x,i)} \text{ has distribution } \mathbb P_{(0,K_t)}.
\]
Later we will more generally consider MAPs that may possibly be killed (with $\xi$ reaching then $+\infty$). 
See Section \ref{SecMAP} for background, references and the notion of Lamperti transform. Of course, when $\kappa=1$, the first marginal $\xi$ reduces to a standard L\'evy process and its Lamperti transform to a positive self--similar Markov process. 

In the multi--type setting, we will observe three different regimes in the limit. Let us  explain this very roughly here and postpone precise statements to the core of the paper. As in the monotype setting, we will always assume that macroscopic jumps of the $X$--marginal are rare, with a rate of order $n^{-\gamma}, \gamma>0$ when $(X,J)$ is in the state  $(n,i)$, for all types $i$. We will further assume that the rate of type change is of order $n^{-\beta}, \beta\geq0$ when $(X,J)$ is in the state  $(n,i)$, for all types $i$. We will then have to accelerate time by a factor $n^{\gamma}$ in the process (\ref{Pr}) to observe a non--trivial limit. The nature of the limit will depend of the position of $\beta$ compared to $\gamma$:
\begin{enumerate}
\item[$\bullet$] If $\beta=\gamma$ (critical regime), the limiting process is a Lamperti transform MAP involving at most $\kappa$ types.
\item[$\bullet$] If $\beta<\gamma$ (mixing regime), the limiting process is a positive self--similar Markov process, whose distribution is a mixture of the contributions of the different types, depending on the stationary distribution of the types.
\item[$\bullet$] If $\beta>\gamma$ (solo regime), the limiting process  is a positive self--similar Markov process, whose distribution depends only on the initial type.
\end{enumerate}
The third case, when the changes of types occur at rates slower than the macroscopic jumps is the most simple one: in that case the chain will not change type at all in the scaling limit, and we are left with the standard monotype case. In the critical regime, the rescaled chain will locally behave as if it was monotype, until the type changes (after a strictly positive time). Our study in that situation will consist in studying monotype processes on the type--constancy intervals of the chain and then ``gluing" all these processes together. In the mixing regime, the types will change quickly and will give rise in the scaling limit to a stationary distribution, which is used to combine contributions from each type. The formal assumptions corresponding to each of those three cases are given in 
Hypotheses ($\mathsf H_{\mathrm{cr}}$) (Section \ref{SecCritique}, for the critical regime), ($\mathsf H_{\mathrm{mix}}$) (Section \ref{SecMixing}, for the mixing regime) and ($\mathsf H_{\mathrm{sol}}$) (Section \ref{SecSolo}, for the solo regime).  In each of these three situations, we will also describe the scaling limit of the absorption time of the marginal $X_n$.

\bigskip

{\setlength{\baselineskip}{0.8\baselineskip}
\tableofcontents \par}


\section{Markov additive processes and their Lamperti transforms}
\label{SecMAP}


This section concerns the continuous--time processes that will arise in the scaling limits of the bivariate Markov chains under consideration. 

\subsection{Generalities on Markov additive processes}

We give here some background on \emph{Markov additive processes} and refer to Asmussen \cite[Chapter XI]{asmussen} for details and applications. 

\smallskip

\begin{defn} Let  $\left((\xi_t,K_t),t \geq 0 \right)$ be a  Markov process on $\mathbb R \times \{1,\ldots,\kappa\}\cup\{(+\infty,0)\}$, where $\kappa \in \mathbb N$, and write $\mathbb P_{(x,i)}$ for its distribution when starting at a point $(x,i)$. It is called a \emph{Markov additive process} \emph{(MAP)} if for all $t \in \mathbb R_+$ and all $(x,i) \in \mathbb R \times \{1,\ldots,\kappa\}$,
\[
\left(\left(\xi_{t+s}-\xi_t,K_{t+s}),s \geq 0\right) \ | \ (\xi_u,K_u),u\leq t , \xi_t<\infty \right) \text{ under } \mathbb P_{(x,i)} \text{ has distribution } \mathbb P_{(0,K_t)},
\]
and $(+\infty,0)$ is an absorbing state.
\end{defn}

\smallskip

Note that MAPs are closely related to Lévy processes. When $\kappa=1$, $\xi$ is clearly a standard Lévy process. In the general case, the chain $(K_t,t\geq 0)$ is a continuous--time Markov chain, and on its constancy intervals, the process $\xi$ behaves as a Lévy process, whose dynamics depend only on the value of the chain $K$. Jumps of $K$ may also induce jumps of $\xi$. As in the discrete setting, we will sometimes refer to $\xi$ as the position marginal, and $K$ as the type marginal. 
 In this paper, unless otherwise stated, we always consider MAPs such that $\xi$ is \textbf{non--decreasing}. The distribution of such a process is then characterized by three families of parameters:
\begin{enumerate}
\item[$\bullet$] the jump rates $(\lambda_{i,j})_{(i,j) \in \{1,\ldots,\kappa\}^2,i\neq j}$ of the chain $(K_t,t\geq 0)$
\item[$\bullet$] a set of distributions on $[0,+\infty)$ $(B_{i,j})_{(i,j) \in \{1,\ldots,\kappa\}^2,i\neq j}$: $B_{i,j}$ is the distribution of the jump of $\xi$ when $K$ jumps from $i$ to $j$
\item[$\bullet$] triplets $(k^{(i)}, c^{(i)},\Pi^{(i)})$, where $k^{(i)},c^{(i)}\geq 0$ and $\Pi^{(i)}$ are $\sigma$--finite measures on $(0,\infty)$ such that $\int_{(0,\infty)} (1 \wedge x) \Pi^{(i)}(\mathrm dx)<\infty$, $1 \leq i \leq \kappa$.
The triplet $(k^{(i)}, c^{(i)},\Pi^{(i)})$ corresponds to the standard parameters (killing rate, drift and L\'evy measure) of the subordinator which $\xi$ follows on the time intervals where $K=i$. We call $(\psi_i)_{i\in\{1,\ldots,\kappa\}}$ the corresponding Laplace exponent, that is, for $i\in\{1,\ldots,\kappa\}$, $q\geq 0$
\[\psi_i(q)=k^{(i)}+c^{(i)}q+\int_0^{\infty} (1-\e^{-qx})\Pi^{(i)}(\mathrm{d}x).\]
\end{enumerate}
If $\xi$ is killed at a time $t$, then by convention $\xi_s=+\infty$ and $K_s=0$ for $s\geq t.$ 

\bigskip

\noindent \textbf{Asymptotics.} In most circumstances, we will exclude the cases where the process $(\xi_t,t\geq 0)$ may be absorbed in a constant state after a certain time. Typically this cannot happen, with probability one, as soon as for each type $i \in \{1,\ldots,\kappa\}$:
\begin{equation}
\label{HypNotConstant}
\begin{aligned}
 \text{a)}& \text{ either one of the parameters } k^{(i)}, c^{(i)},\Pi^{(i)} \text{ is not trivial  or }\exists j\neq i \text{ such that } \lambda_{i,j}>0 \text{ and } B_{i,j} \neq \delta_0\\
 & \text{(the type $i$ may induce a jump)} \\
 \text{b)}& \text{ or } \exists j \text{ satisfying a) and a path }i_1=i,i_2,\ldots,i_p=j \text{ such that } \lambda_{i_k,i_{k+1}}>0 \text{ for all } 1\leq k \leq p-1 \\ 
& \text{(in all irreducible components of types, there is at least one type satisfying a))}.
\end{aligned}
\end{equation}
We then note the following simple law of large numbers--type lemma which we will need in what follows. We point out that the limit is not deterministic in general, specifically it depends on which irreducible component the Markov chain of types lands into.

\begin{lem} 
\label{lem:linearite}
Assume $(\ref{HypNotConstant})$ for each type $i \in \{1,\ldots,\kappa\}$. Then, as $t\to\infty$, the random variable $t^{-1}\xi_t$ has a $\mathbb P_{(x,i_0)}$--almost--sure limit, which is strictly positive (and possibly infinite), for all $(x,i_0) \in \mathbb R \times \{1,\ldots,\kappa\}$.
\end{lem}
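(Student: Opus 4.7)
The plan is to exploit the finite state space of $K$ to reduce to an irreducible class, then run a renewal argument along the return times of $K$ to a fixed state, and finally use the monotonicity of $\xi$ to upgrade convergence along this sequence to convergence for all $t\to\infty$. First, since $K$ lives on $\{1,\ldots,\kappa\}$, on almost every path it enters, after a $\mathbb P_{(x,i_0)}$-a.s.\ finite time, one of the closed irreducible classes $C$, and $\xi$ has accumulated a finite amount by that time. By the strong Markov property it suffices to treat the case where $K$ is irreducible on some fixed class $C$ and starts from a fixed state $i_0\in C$; the non-deterministic character of the limit then reflects precisely the random choice of $C$.

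Within an irreducible class $C$ I would distinguish two sub-cases according to whether killing of $\xi$ occurs a.s.\ or not. In the killing case (which happens a.s.\ as soon as $k^{(j)}>0$ for some $j\in C$, since $j$ is then visited infinitely often), $\xi_t=+\infty$ for $t$ larger than the killing time, so $t^{-1}\xi_t\to+\infty$ trivially. Otherwise, let $0=T_0<T_1<T_2<\cdots$ be the successive return times of $K$ to $i_0$. By the strong Markov property the pairs $(T_n-T_{n-1},\xi_{T_n}-\xi_{T_{n-1}})_{n\geq 1}$ are i.i.d.\ with values in $(0,\infty)\times[0,+\infty)$, and since $K|_C$ is an irreducible continuous-time chain on a finite set, $m:=\mathbb E[T_1]\in(0,\infty)$. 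The SLLN gives $T_n/n\to m$ a.s., and for $\Delta_n:=\xi_{T_n}-\xi_{T_{n-1}}$ it yields $\xi_{T_n}/n\to\mathbb E[\Delta_1]\in(0,+\infty]$ a.s.: classically when $\mathbb E[\Delta_1]<\infty$, and via truncation $\Delta_n\wedge M$ followed by $M\to\infty$ otherwise (this latter case is possible e.g.\ when $\int x\,\Pi^{(j)}(\mathrm dx)=\infty$ for some $j\in C$). The strict positivity of $\mathbb E[\Delta_1]$ uses hypothesis (\ref{HypNotConstant}): by b), some $j\in C$ satisfies a), and during each excursion $K$ visits $j$ with positive probability, producing via the subordinator drift/L\'evy measure at $j$ or via a transition $j\to j'$ with $B_{j,j'}\neq\delta_0$ a strictly positive contribution to $\Delta_1$, hence $\mathbb P(\Delta_1>0)>0$.

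Combining these gives $\xi_{T_n}/T_n\to\alpha_C:=\mathbb E[\Delta_1]/m\in(0,+\infty]$ a.s. To pass from this to $\xi_t/t$, I would use the non-decreasing character of $\xi$: with $N(t):=\max\{n:T_n\leq t\}$,
\[
\frac{\xi_{T_{N(t)}}}{T_{N(t)+1}}\;\leq\;\frac{\xi_t}{t}\;\leq\;\frac{\xi_{T_{N(t)+1}}}{T_{N(t)}},
\]
and since $N(t)\to\infty$ and $T_{n+1}/T_n\to 1$ a.s., both bounds converge to $\alpha_C$. The main technical obstacle, beyond these standard ingredients, is the bookkeeping for the case $\mathbb E[\Delta_1]=+\infty$ and the separate treatment of killing; both are resolved as above, the truncation handling the former and the direct observation $\xi_t=+\infty$ beyond the killing time handling the latter.
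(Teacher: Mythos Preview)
Your proof is correct and follows essentially the same approach as the paper's: reduce to an irreducible class (handling killing separately), run a renewal argument along the return times of $K$ to a fixed state using the SLLN, and then sandwich $\xi_t/t$ between $\xi_{T_{N(t)}}/T_{N(t)+1}$ and $\xi_{T_{N(t)+1}}/T_{N(t)}$ via monotonicity. You supply more detail than the paper does (the truncation for the case $\mathbb E[\Delta_1]=+\infty$ and the explicit justification of $\mathbb P(\Delta_1>0)>0$ via hypothesis~(\ref{HypNotConstant})), but the structure is the same.
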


\begin{proof} If the process is killed, then of course the wanted limit is $+\infty$. Thus in following we condition $K$ on arriving into the irreducible component of a type $i$, such that $k^{(j)}=0$ for all $j$ in this component. By (\ref{HypNotConstant}) we can assume that $i$ satisfies a). Let then $(T_n,n\in \N)$ be the successive return times to $i$. By the law of large numbers, both $T_n/n$ and $\xi_{T_n}/n$ have strictly positive limits a.s. as $n$ tends to infinity (possibly an infinite limit for $\xi_{T_n}/n$). For $t\geq 0$, we then let $n(t)$ be the unique integer such that $T_{n(t)}\leq t <T_{n(t)+1},$ and if we write
\[\frac{\xi_{T_{n(t)}}}{T_{n(t)+1}}\leq\frac{\xi_t}{t}\leq \frac{\xi_{T_{n(t)+1}}}{T_{n(t)}},\]
both bounds converge to the same limit, ending the proof.
\end{proof}

\subsection{Lamperti transform MAPs}
\label{LampertiMAP}

The Lamperti transformation is a time--change used by Lamperti  \cite{Lamp62, Lamp72}  to give a one--to--one correspondence between L\'evy processes and non--negative self--similar Markov processes with a fixed index of self--similarity. We give here a generalization to multi--type self--similar processes. Let $\left((\xi_t,K_t),t \geq 0 \right)$ be a MAP (exceptionally here $\xi$ is not supposed monotone) and $\gamma>0$ be a number we call the \emph{index of self--similarity}. Let also $Z_t=\e^{-\xi_t}$ for $t\geq 0.$ We let $\rho$ be the time--change associated to $Z$ and $\gamma$ by:
	\[\rho(t) = \inf \left\{u, \int_0^u (Z_r)^{\gamma} \mathrm{d}r >t\right\},
\] 
and call \emph{Lamperti transform of $\left((\xi_t,K_t),t \geq 0 \right)$} the process $\left((X_t,J_t),t\geq 0)\right)$ defined by
\begin{equation}
\label{LMAP}
X_t=Z_{\rho(t)}, \quad J_t=K_{\rho(t)}.
\end{equation}
Here, by convention, $\rho(t)=\infty$ if $t \geq \int_0^{\infty} (Z_r)^{\gamma} \mathrm{d}r$ and we let $X_{t}=0$ and  $J_t=0$ for such times $t$. Note that, while $J$ is c\`adl\`ag on $[0,\int_0^{\infty} (Z_r)^{\gamma} \mathrm{d}r)$, it does not have a left limit at $\int_0^{\infty} (Z_r)^{\gamma} \mathrm{d}r$ (whether this integral is finite, or not) in general.

When $\kappa=1$, $\xi$ is a standard Lévy process, and the marginal $X$ is a non--negative self--similar Markov process. Reciprocally, any such Markov process can be written in this form, see  \cite{Lamp72}.
In general, for any $\kappa$, one readily checks that the process $\left((X_t,J_t),t\geq 0)\right)$ is Markovian and $\gamma$--self--similar, in the sense that 
$\left((X_t,J_t),t\geq 0\right)$, started from $(x,i),$ has the same distribution as $ \left((xX'_{x^{-\gamma}t},J'_{x^{-\gamma}t}),t\geq 0\right)$, where  $\left((X'_t,J'_t),t\geq 0\right)$ is a version of the same process which starts at  $(1,i).$ 
We point out that recently, \cite{CPR13,KKPW14} gave a one--to--one correspondence via Lamperti transformation between a family of MAPs with two types and \emph{real--valued} self--similar Markov processes with initial condition different from zero, generalizing the initial result of Lamperti \cite{Lamp72}. 

\subsubsection{Some properties of the time--change}

We give here a few properties of the Lamperti--type time--change introduced above which we will need at various places in the paper. We place ourselves in a more general framework and let $f$ be a non--increasing and c\`adl\`ag function from $[0,\infty)$ to $[0,\infty)$ such that $f(0)=1$. We introduce the notation
\[T_0(f)=\inf\big\{t\geq 0, f(t)=0\}.\]
Let also $\alpha\in\R.$ We then call the \emph{Lamperti time--change} (associated to $f$ and $\alpha$) the function $\tau$ defined for $t\geq0$ by
	\[\tau(t) = \inf \left\{u, \int_0^u f(r)^{\alpha} \mathrm{d}r >t\right\},
\] 
and then call the function $g$ defined by $g(t)=f(\tau(t))$ the Lamperti transform of $f$, where by convention $\inf\{\emptyset\}=\infty$ and $f(\infty)=0$.
Note that $$T_0(g)=\int_0^{T_0(f)} f(r)^{\alpha}\mathrm d r$$ and that $\tau$ induces a bijection between $[0,T_0(f))$ and $[0,T_0(g))$. For $t\geq T_0(g)$, $\tau(t)$ is constantly equal to $T_0(f)$, whereas for $t\leq T_0(g)$,
\[\int_0^{\tau(t)} f(r)^{\alpha} \mathrm{d}r = t,
\]
which implies that $\tau$ is left and right--differentiable everywhere on $[0,T_0(g))$, with  $\tau'(t^{\pm})=$ \linebreak $f(\tau(t)^{\pm})^{-\alpha}$. This means informally that $\tau$ corresponds to a local rescaling of time by a factor $f(\tau(t))^{-\alpha}$. This also explains why, if we let $\rho$ be the Lamperti time--change associated to $g$ and $-\alpha$, then $\rho$ is the inverse bijection of $\tau$ from $[0,T_0(g))$ to $[0,T_0(f))$.

\bigskip

We will need the following lemma which shows that the Lamperti transformation, when $\alpha<0$, behaves well with the $J_1$--Skorokhod topology:
\begin{lem}\label{lampsko}
Let $(f_n)_{n\in\N}$ be a sequence of non--increasing and c\`adl\`ag functions from $[0,\infty)$ to $[0,\infty)$ and assume that $f_n\to f$ in the Skorokhod sense. Let $\alpha<0$ and $\tau_n,\tau,g_n,g$ be the respective Lamperti time--changes and Lamperti transforms of $f_n,f$. Then
\vspace{-0.2cm}
\begin{enumerate}
\item[\emph{(i)}] $\tau_n$ converges uniformly to $\tau$ on compact sets.
\vspace{-0.1cm}
\item[\emph{(ii)}] $g_n$ converges to $g$ in the Skorokhod sense.
\end{enumerate}
\end{lem}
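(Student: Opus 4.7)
The plan is to establish (i) by inverting the convergence of the primitives $\sigma_n(u):=\int_0^u f_n(r)^{\alpha}\,\d r$ to $\sigma(u):=\int_0^u f(r)^{\alpha}\,\d r$, which play the role of right--inverses of $\tau_n$ and $\tau$ on $[0,T_0(f_n))$ and $[0,T_0(f))$ respectively, and then to derive (ii) from (i) via a standard composition argument for the Skorokhod topology.

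For (i), fix any continuity point $u$ of $f$ with $u<T_0(f)$. By monotonicity $f_n(r)\geq f_n(u)$ for $r\leq u$, while Skorokhod convergence at the continuity point $u$ gives $f_n(u)\to f(u)>0$; since $\alpha<0$, the sequence $r\mapsto f_n(r)^{\alpha}$ is therefore uniformly bounded on $[0,u]$ for $n$ large, and dominated convergence yields $\sigma_n(u)\to\sigma(u)$ on this dense subset of $[0,T_0(f))$. Since $\sigma$ is continuous and strictly increasing on $[0,T_0(f))$, a P\'olya--type argument upgrades this pointwise convergence of the non--decreasing $\sigma_n$'s to locally uniform convergence on $[0,T_0(f))$. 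Inversion of monotone functions with continuous strictly increasing limit then gives $\tau_n\to\tau$ uniformly on every compact subset of $[0,T_0(g))$.

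The main obstacle is extending this uniform convergence onto compacts intersecting $[T_0(g),\infty)$, where $\tau\equiv T_0(f)$: a priori $T_0(f_n)$ need not converge to $T_0(f)$, so the boundary behaviour requires a separate argument. For fixed $t\geq T_0(g)$ and $\veps>0$ the plan is to obtain the lower bound $\liminf_n\tau_n(t)\geq T_0(f)$ by monotonicity, observing that $\liminf_n\tau_n(t)\geq\lim_n\tau_n(s)=\tau(s)$ for any $s<T_0(g)$ and letting $s\uparrow T_0(g)$; and to obtain the upper bound by remarking that on $[T_0(f),T_0(f)+\veps]$ the limit $f$ vanishes, so $f_n\to 0$ at every continuity point there, and since $\alpha<0$ this forces $f_n(r)^{\alpha}\to+\infty$. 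Fatou's lemma then gives $\sigma_n(T_0(f)+\veps)\to+\infty$, whence $\tau_n(t)\leq T_0(f)+\veps$ for $n$ large. This is the step where the assumption $\alpha<0$ is used in an essential way. Since $\tau$ is constant past $T_0(g)$, uniformity on compacts follows.

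For (ii), with the uniform convergence of $\tau_n$ to the continuous non--decreasing function $\tau$ from (i), I would invoke the standard composition theorem for the $J_1$--Skorokhod topology (e.g.\ Jacod--Shiryaev VI.1.23 or Whitt): if $h_n\to h$ in Skorokhod and $\phi_n\to\phi$ uniformly on compact sets, with $\phi$ continuous and non--decreasing, then $h_n\circ\phi_n\to h\circ\phi$ in Skorokhod. Applied with $h_n=f_n$ and $\phi_n=\tau_n$, this yields $g_n=f_n\circ\tau_n\to f\circ\tau=g$ in the Skorokhod sense. As a self--contained alternative, one can build reparametrizations explicitly: if $(\lambda_n)$ realize the Skorokhod convergence $f_n\to f$, then $\mu_n:=\sigma_n\circ\lambda_n\circ\tau$ satisfies $\tau_n\circ\mu_n=\lambda_n\circ\tau$ on $[0,T_0(g))$, so that $g_n\circ\mu_n=(f_n\circ\lambda_n)\circ\tau\to f\circ\tau=g$ uniformly on compacts of $[0,T_0(g))$, while $\mu_n\to\sigma\circ\tau=\mathrm{id}$ uniformly there, with an affine extension past $T_0(g)$ handling the region where $g$ is eventually $0$.
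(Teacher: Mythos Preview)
Your proof is correct and follows essentially the same approach as the paper's. For (i) you work with the primitives $\sigma_n=\tau_n^{-1}$ and use dominated convergence (on $[0,T_0(f))$) and Fatou (for the upper bound at $t\geq T_0(g)$) before inverting, whereas the paper argues directly on $\tau_n$ via subsequential limits and an ad hoc inequality at the boundary; these are equivalent formulations of the same idea. For (ii) your explicit reparametrization $\mu_n=\sigma_n\circ\lambda_n\circ\tau$ is exactly the paper's $\tau_n^{-1}\circ\lambda_n\circ\tau$, so the arguments coincide.
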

We leave the proof of this lemma to Appendix \ref{sec:appsko}.

\subsubsection{Absorption time}
\label{sec:absorption}

Consider $(\xi^{(i)},K^{(i)})$ a MAP starting from $(0,i)$ (with $\xi$ non--decreasing) and satisfying (\ref{HypNotConstant}) for all types. Let $(X^{(i)},J^{(i)})$ be its Lamperti transform defined by (\ref{LMAP}) and
$$I^{(i)}:=\int_0^\infty (Z^{(i)}_r)^{\gamma} \mathrm dr$$ 
denote the time at which $X^{(i)}$ is absorbed at 0. By Lemma \ref{lem:linearite}, $I^{(i)}<\infty$ a.s.

\bigskip

\noindent {\bf Continuity of $X^{(i)}$ at time $I^{(i)}$}. When $X^{(i)}$ is a standard self--similar Markov process ($\kappa=1$), it is well--known and easy to check that it is absorbed continuously at 0 if and only if $\xi^{(i)}$ is not killed. We will use this on several occasions. Note that this generalizes easily to the multi--type setting. In particular, when there is no killing in the MAP $(\xi^{(i)},K^{(i)})$, the process $X^{(i)}$ is  absorbed continuously at 0. This leads to the following fact, which we will use later on:
let $(T^{(i)}(p),p \geq 1)$ be the successive jump times of the type marginal $J^{(i)}$, with the convention that $T^{(i)}(p)=I^{(i)}$ if there is strictly less than $p$ type changes. Hence either there is some killing in the MAP or the type is asymptotically constant, in which cases $T^{(i)}(p)=I^{(i)}$ for $p$ large enough, or there is no killing and no type is absorbed and the type changes  infinitely often. In this last case, $\rho^{-1}(T^{(i)}(p))$ tends to infinity, hence $T^{(i)}(p)$ tends to $I^{(i)}$, and $X^{(i)}$ is absorbed continuously at $I^{(i)}$. So finally in all cases,
\begin{equation}
\label{eq:cvchgttype}
X^{(i)}\big(T^{(i)}(p)\big) \underset{p\rightarrow \infty}\longrightarrow 0.
\end{equation}


\section{Details on the bivariate Markov chain $(X,J)$}
\label{SecDetails}


We fix here some conventions and notations on the $\mathbb Z_+ \times \{1,\ldots,\kappa\}$--valued Markov chain $(X,J)$ introduced in the Introduction. We recall that the $(p_{(n,i)}(m,j))$ denote the transition probabilities of the chain and that $(X_n^{(i)},J_n^{(i)})$ refers to the chain starting from $(n,i)$.

\bigskip

\noindent \textbf{Absorption time.} For all types $i$ and all integers $n$, let $A_n^{(i)}$ be the first time when the chain $X_n^{(i)}$ is absorbed, i.e.
\begin{equation}
\label{DiscreteAbsorption}
A_n^{(i)}:=\inf\Big\{k \geq 0: X_n^{(i)}(k')=X_n^{(i)}(k) \text{ for all }k'\geq k \Big\}.
\end{equation}
Since the chain is $\mathbb Z_+$--valued and non--increasing, $A_n^{(i)}$ is finite. We decide in the following that once absorbed the chain cannot change its type. This implies that the chain can be absorbed in a state $a$ (for some initial configuration $(n,i)$) if and only if there exists a type $j \in \{1,\ldots,\kappa\}$ such that $$p_{a,j}(a,j)=1.$$ We call such a state $a$ an \emph{absorbing state}. Note that clearly 0 is absorbing, and our convention implies that $p_{0,j}(0,j)=1$ for all types $j$.

\bigskip

\noindent \textbf{Type transition matrix}. We let $P_n(i,j)$ be the probability to pass from type $i$ to type $j$ when the position $X$ is in $n$, i.e. 
\begin{equation}\label{ProbaMatriceType}
P_n(i,j)=\sum_{m=0}^n p_{n,i}(m,j), \quad \forall i,j \in \{1,\ldots,\kappa\}.
\end{equation}

\bigskip

\noindent \textbf{Position transition probabilities}.  On the other hand, we let $p^{(i)}_n(m)$ be the probability to pass from position $n$ to position $m$ when the type $J$ is in $i$, i.e.
\begin{equation}\label{ProbaChgtPosition}
p^{(i)}_n(m)=\sum_{j \in \{1,\ldots,\kappa\}} p_{n,i}(m,j), \quad \forall n,m \in \mathbb Z_+. 
\end{equation}


\section{Critical regime}
\label{SecCritique}


This section is devoted to the case where the macroscopic jump rate and the type change rate of the chain $(X,J)$ are of the same order. This, in general, will give in the scaling limit a Lamperti transform MAP with several types. To simplify, we restrict ourselves to cases where  the limiting MAP is not eventually constant (hence (ii) in the following hypothesis). Formally, we assume throughout the following 

\medskip

\begin{mybox}
\noindent \textbf{Hypothesis $(\mathsf H_{\mathrm{cr}})$.} (i) For all $i,j \in \{1,\ldots,\kappa\}$, there exists finite measures $\mu^{(i,j)}$ on $(0,1]$ such that for all continuous functions $f:[0,1]\rightarrow \mathbb R$,
$$
n^{\gamma} \sum_{m=0}^n f\left(\frac{m}{n}\right) \left(1-\frac{m}{n}\mathbbm 1_{\{j=i\}} \right)p_{n,i}(m,j) \underset{n\rightarrow \infty}\longrightarrow \int_{(0,1]} f(x)\mu^{(i,j)}(\mathrm dx). 
$$
(ii) Moreover, for all $i\in \{1,\ldots,\kappa\}$: 
\begin{enumerate}
\item[a)] either $\sum_{j \in \{1,\ldots,\kappa\}}\mu^{(i,j)}(0,1)>0$ 
\item[b)] or there exists a type $\ell$ such that $\sum_{j \in \{1,\ldots,\kappa\}}\mu^{(\ell,j)}(0,1)>0$ and a path from $i$ to $\ell$, $i_1=i,$ $i_2,\ldots,i_p={\ell}$ such that $\mu^{(i_k,i_{k+1})}(0,1]>0$ for all $1 \leq k \leq p-1$.
\end{enumerate}
\end{mybox}

\noindent Note that under $(\mathsf H_{\mathrm{cr}})$, the probability $P_n(i,j)$ (defined in (\ref{ProbaMatriceType})) gives asymptotically
$$
n^{\gamma} P_n(i,j) \rightarrow \mu^{(i,j)}((0,1]), \quad j \neq i,
$$
in particular, starting from the position $n$, the probability of changing type is asymptotically $$n^{-\gamma} \sum_{j \in \{1,\ldots,\kappa\}\backslash\{i\}} \mu^{(i,j)}((0,1]).$$ 
Concerning large jumps, the probability, starting from position $n$, to do a jump larger than $n\varepsilon$  is asymptotically, for a.e. $\varepsilon>0$, $$n^{-\gamma}\left(\sum_{j\in \{1,\ldots,\kappa\}\backslash\{i\}}\mu^{(i,j)}((0,1-\varepsilon]) +\int_{(0,1-\varepsilon]}\frac{\mu^{(i,i)}(\mathrm dx)}{1-x}\right).$$
Note that the later quantity is finite but tends to $\infty$ when $\varepsilon \rightarrow 0$ and $\int_{(0,1)}(1-x)^{-1}\mu^{(i,i)}(\mathrm dx)$ is infinite.

\bigskip

\noindent \textbf{The limiting process.} From the measures $\mu^{(i,j)}$ appearing in the limit of $(\mathsf H_{\mathrm{cr}})$, we construct the following characteristics of a MAP:
\begin{enumerate}
\item[$\circ$] for all $i\in \{1,\ldots,\kappa\}$, \ $\psi_i(q)=\mu^{(i,i)}(\{1\})q+\int_{(0,1)} (1-x^{q})\frac{\mu^{(i,i)}(\mathrm dx)}{1-x}, \quad q \geq 0,$
\item[$\circ$] for all $i,j\in \{1,\ldots,\kappa\}$, $i\neq j$, \
$\lambda_{i,j} B_{i,j}=\mu^{(i,j)} \circ (-\log)^{-1}.$
\end{enumerate}
Under the assumptions a) and b) of $(\mathsf H_{\mathrm{cr}})$, we see that these characteristics satisfy (\ref{HypNotConstant}), hence the MAP is absorbed at 0 in finite time a.s.

\bigskip

\noindent \textbf{Changing time.} In order to slow down time in $(X^{(i)}_n,J^{(i)}_n)$ and observe in the scaling limit a regular MAP, we introduce the following time--change: 
\begin{equation}
\label{timechange}
\tau_n^{(i)}(t):=\inf\left\{u\geq 0 :\int_0^u \left(\frac{X_n(\lfloor n^{\gamma} r\rfloor)}{n}\right)^{-\gamma} \mathrm dr>u\right\}.
\end{equation}
We then define a new c\`adl\`ag process $(Z_n^{(i)})$ by
\begin{equation}
\label{Zn}
Z_n^{(i)}(t):=\frac{X_n^{(i)}(\lfloor n^{\gamma} \tau_n^{(i)}(t)\rfloor)}{n}, \quad t \geq 0. 
\end{equation}

\bigskip

\noindent We can now state the main result of this section.

\begin{thm} 
\label{ThCritical}
Under assumption $(\mathsf H_{\mathrm{cr}})$, for all types $i$,
$$
\left(\Bigg(\frac{X^{(i)}_n(\lfloor n^{\gamma}t \rfloor)}{n}, Z_n^{(i)}(t)\Bigg), t\geq 0\right) \ \overset{\mathrm{(d)}}{\underset{n \rightarrow \infty} \longrightarrow} \ \big(X^{(i)},  Z^{(i)}\big),
$$
where $-\log(Z^{(i)})$
is the position component of a \emph{MAP} starting from $(0,i)$ with the characteristics $\psi_j,\lambda_{j,l}, B_{j,l}$ defined above, and $X^{(i)}=Z^{(i)}(\rho^{(i)}(\cdot))$ with
$$
\rho^{(i)}(t)=\inf \left\{u:\int_0^u \big(Z^{(i)}(r)\big)^{\gamma} \mathrm dr>t\right\}.
$$
The topology is the product topology on $\mathcal D\left([0,\infty),[0,\infty)\right)^2$, where $\mathcal D\left([0,\infty),[0,\infty)\right)$ is the set of non--negative c\`adl\`ag functions defined on  $[0,\infty)$, endowed with the Skorokhod topology.
\end{thm}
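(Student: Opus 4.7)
I would prove convergence of $X_n^{(i)}(\lfloor n^\gamma\cdot\rfloor)/n$ to $X^{(i)}$ by induction on the number of type changes, and then deduce joint convergence with $Z_n^{(i)}$ by invoking Lemma \ref{lampsko}, since $Z_n^{(i)}$ is by construction the Lamperti transform (with $\alpha=-\gamma$) of $X_n^{(i)}(\lfloor n^\gamma\cdot\rfloor)/n$.

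\emph{Base case, first constancy interval.} Let $\tilde X_n^{(i)}$ denote $X_n^{(i)}$ conditioned on never leaving type $i$, with transitions $p_{n,i}(m,i)/P_n(i,i)$. Summing $(\mathsf H_{\mathrm{cr}})$(i) over $j\neq i$ with $f\equiv 1$ yields $1-P_n(i,i)=O(n^{-\gamma})$, so $P_n(i,i)\to 1$. Hypothesis $(\mathsf H_{\mathrm{cr}})$(i) with $j=i$ then becomes exactly the Haas--Miermont monotype assumption $(\ref{HypMonotype})$ with measure $\mu^{(i,i)}$ and index $\gamma$, so the main theorem of \cite{HM11} yields convergence of $\tilde X_n^{(i)}(\lfloor n^\gamma t\rfloor)/n$ to the $\gamma$-self-similar Markov process with Laplace exponent $\psi_i$, i.e., to the restriction of $X^{(i)}$ to its first type-constancy interval. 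In parallel, $(\mathsf H_{\mathrm{cr}})$(i) applied to $j\neq i$ gives joint convergence of the first type-change time $\sigma_n^{(i)}(1)$, the destination type, and the induced log-position jump to, respectively, an exponential time of rate $\sum_{j\neq i}\lambda_{i,j}$, a destination $j$ chosen with probability $\lambda_{i,j}/\sum_{j'\neq i}\lambda_{i,j'}$, and a jump of law $B_{i,j}$, using the identity $\lambda_{i,j}B_{i,j}=\mu^{(i,j)}\circ(-\log)^{-1}$.

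\emph{Induction and passage to infinity.} Applying the strong Markov property at $\sigma_n^{(i)}(p)$, I restart with the new type and the new (still of order $n$) starting position and iterate, obtaining convergence on $[0,\sigma_n^{(i)}(p)]$ for every fixed $p$. To extend to $[0,\infty)$, I invoke $(\ref{eq:cvchgttype})$: $X^{(i)}(T^{(i)}(p))\to 0$ as $p\to\infty$, and since $X_n^{(i)}$ is $\mathbb Z_+$-valued and non-increasing, the tail of $X_n^{(i)}(\lfloor n^\gamma\cdot\rfloor)/n$ beyond $\sigma_n^{(i)}(p)$ has vanishing oscillation for large $p$. Convergence of $Z_n^{(i)}$ to $Z^{(i)}$, and hence the joint convergence, then follows immediately from Lemma \ref{lampsko} applied with $\alpha=-\gamma$ to the already established convergence $X_n^{(i)}(\lfloor n^\gamma\cdot\rfloor)/n\to X^{(i)}$.

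\emph{Main obstacle.} The technical heart is the base case: establishing joint convergence of the trajectory on $[0,\sigma_n^{(i)}(1))$, the first type-change time, and the associated jump distribution. One has to show that the position just before the type change is, in the appropriate sense, asymptotically independent of the change itself, and that the joint law of the new type and of the jump of $\log X_n^{(i)}$ at that time converges to a mixture weighted by the $\lambda_{i,j}B_{i,j}$ --- both naturally obtained by checking convergence of the compensator of the type-change point process and of the conditional distribution of its marks. Once this is in place, the strong Markov iteration and the limit $p\to\infty$ are routine.
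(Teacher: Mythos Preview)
Your overall architecture coincides with the paper's: establish convergence interval by interval (this is Lemma~\ref{LemCoupe}), glue via the strong Markov property, let $p\to\infty$ using $(\ref{eq:cvchgttype})$, and finish with Lemma~\ref{lampsko}. The induction step and the last two steps are essentially as in the paper.

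The base case, however, is handled differently, and your version has a gap. The paper does \emph{not} use your conditioned chain with transitions $p_{n,i}(m,i)/P_n(i,i)$. It instead defines $X_n^{(i)}|_1$, the chain \emph{killed} (sent to $0$) at the first type change, with transitions $q_n^{(i)}(m)=p_{n,i}(m,i)$ for $m\neq 0$ and $q_n^{(i)}(0)=p_{n,i}(0,i)+\sum_{j\neq i}\sum_m p_{n,i}(m,j)$. The advantage is that the monotype results of \cite{HM11} then give, in one stroke, convergence of the trajectory \emph{and} of its absorption time---which is precisely $T_n^{(i)}(1)/n^\gamma$. Your conditioned chain does converge (to the unkilled self-similar process with exponent $\psi_i$, not to ``the restriction of $X^{(i)}$ to its first type-constancy interval'', which is the killed process with exponent $\psi_i+\sum_{j\neq i}\lambda_{i,j}$), but it gives you no direct handle on $\sigma_n^{(i)}(1)$: with your decomposition, the type-change time is the first success among Bernoulli trials with state-dependent parameter $1-P_{\tilde X_n^{(i)}(k)}(i,i)$, and extracting its limit requires a genuine compensator argument that you only allude to. Relatedly, your claim that $\sigma_n^{(i)}(1)/n^\gamma$ converges to an exponential of rate $\sum_{j\neq i}\lambda_{i,j}$ is wrong: that is the distribution of the first type-change time in the MAP time scale, but in the $X^{(i)}$ time scale the limit $T^{(i)}(1)$ is its Lamperti transform, which is not exponential.

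There is a second subtlety you correctly flag as the main obstacle but do not resolve: the convergence of $X_n^{(i)}(T_n^{(i)}(1)-1)/n$, the position just before the type change. This does not follow from Skorokhod convergence of $X_n^{(i)}|_1$ and of $T_n^{(i)}(1)/n^\gamma$ alone (one can have $f_n(t_n-)\to 0$ while $f(t-)>0$; see the remark after Lemma~\ref{lemSko2}). The paper handles this by a further coupling with a second auxiliary chain that, instead of being sent to $0$ at the type change, jumps to $\lfloor n/2\rfloor$; this forces $\liminf X_n^{(i)}(T_n^{(i)}(1)-1)/n>0$ a.s., after which Lemma~\ref{lemSko2}(i) applies. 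Your conditioned chain could in principle play this same role (it also agrees with $X_n^{(i)}$ up to $T_n^{(i)}(1)-1$ and is not killed), but only once the convergence of $T_n^{(i)}(1)/n^\gamma$ has been established, which brings you back to the gap above.
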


\noindent \textbf{Remark.} It is possible to incorporate types in the above convergence, however  this point involves some subtleties (in the limit, the type process of the MAP is not c\`adl\`ag in general). We refer to the forthcoming Lemma \ref{LemCoupe}, where the convergence of types at the times of type--change
is  shown when the limiting process is supposed to have no absorbing types.  

\bigskip

Next, we want to compare the absorption times. This will be essential for the applications. We emphasize that it is not a direct consequence of the previous theorem. Let $I^{(i)}$ be the absorption time at 0 of $X^{(i)}$ and recall that it is finite a.s.

\begin{thm}
\label{ThCriticalJoint} Additionally to $(\mathsf H_{\mathrm{cr}})$, assume that for all types $i$, there exists a type $j$ such that \linebreak $\mu^{(i,j)}((0,1))>0$. Then, jointly with the previous convergence,
$$
 \frac{A_n^{(i)}}{n^{\gamma}}\overset{\mathrm{(d)}}{\underset{n \rightarrow \infty} \longrightarrow}I^{(i)},
$$
and for all $a \geq 0$, $\mathbb E \big[ \big(I^{(i)}\big)^a\big]<\infty$ and
$$
\mathbb E\left[\left(\frac{A_n^{(i)}}{n^{\gamma}}\right)^a \right] {\underset{n \rightarrow \infty} \longrightarrow} \mathbb E \big[ \big(I^{(i)}\big)^a\big].
$$
\end{thm}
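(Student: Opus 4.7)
The plan is to establish the joint distributional convergence $A_n^{(i)}/n^\gamma\to I^{(i)}$ first, and then upgrade it to moment convergence via a uniform exponential tail estimate from which the finiteness of $\mathbb{E}[(I^{(i)})^a]$ also follows by Fatou's lemma.

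For joint convergence, I would sandwich $A_n^{(i)}/n^\gamma$ between hitting times of small levels. For each $\epsilon>0$, set
\[
H_n^{\epsilon}:=\inf\bigl\{t\geq 0 : X_n^{(i)}(\lfloor n^{\gamma} t\rfloor)/n \leq \epsilon\bigr\}, \qquad H^{\epsilon}:=\inf\bigl\{t\geq 0 : X^{(i)}(t) \leq \epsilon\bigr\}.
\]
Since all $\psi_j(0)=0$, the MAP is never killed, so $X^{(i)}$ is absorbed continuously at $0$ (see Section~\ref{sec:absorption}), giving $H^{\epsilon}\uparrow I^{(i)}$ almost surely as $\epsilon\downarrow 0$. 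The additional hypothesis $\mu^{(i,j)}((0,1))>0$ ensures that $X^{(i)}$ is strictly decreasing on $[0,I^{(i)})$, so the hitting-time functional at level $\epsilon$ is almost surely continuous at $X^{(i)}$ for Lebesgue-almost every $\epsilon$. Combined with Theorem~\ref{ThCritical}, this yields $(H_n^{\epsilon}, X_n^{(i)}(\lfloor n^{\gamma} \cdot\rfloor)/n)\to(H^{\epsilon},X^{(i)})$ jointly in distribution. The remaining gap $A_n^{(i)}/n^\gamma-H_n^{\epsilon}$ is then controlled by the strong Markov property at $H_n^\epsilon$ together with the uniform tail estimate of the next paragraph: it has expectation $\leq C\epsilon^\gamma$ uniformly in $n$ and the starting type, so sending $\epsilon\to 0$ closes the argument.

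The central ingredient for both the remainder estimate and the moment convergence is the uniform bound $\sup_n \mathbb{E}[\exp(\theta A_n^{(i)}/n^\gamma)]<\infty$ for some small $\theta>0$. The additional hypothesis combined with $(\mathsf H_{\mathrm{cr}})$ yields constants $c,\delta>0$ and $n_0$ such that
\[
\mathbb{P}\bigl(X_n^{(j)}(1)\leq (1-\delta)n\bigr)\geq c\,n^{-\gamma} \qquad \text{for all } n\geq n_0 \text{ and all types } j.
\]
Iterating the strong Markov property, $A_n^{(i)}$ is stochastically dominated by $\sum_{k=0}^{K_n}\Gamma_k$, where $\Gamma_k$ is geometric of parameter $c\lceil(1-\delta)^k n\rceil^{-\gamma}$ and $K_n = O(\log n)$ is sufficient to bring the chain below a fixed threshold (below which the absorption time is handled by an elementary $\mathbb{Z}_+$-valued argument, since only finitely many states remain). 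Dividing by $n^\gamma$, the $k$-th summand is dominated in distribution by $c^{-1}(1-\delta)^{\gamma k}\mathcal{E}_k$ for independent unit-rate exponentials, and $\sum_k(1-\delta)^{\gamma k}<\infty$, whence the Laplace transform at small $\theta>0$ is uniformly bounded in $n$. Uniform integrability of every power follows, giving both $\mathbb{E}[(I^{(i)})^a]<\infty$ via Fatou and the convergence of the $a$-th moments.

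Main obstacle: the trickiest step is extracting the uniform lower bound on the large-jump probability displayed above from the purely asymptotic information in $(\mathsf H_{\mathrm{cr}})$. This requires choosing continuous test functions supported in an interval $[\alpha,1-\delta]$ bounded away from $0$ and $1$, summing over the (at most $\kappa$) destination types $j$ to pick up some $\mu^{(\cdot,j)}((0,1))>0$, and using the finiteness of the type set to make $c$ and $\delta$ independent of the initial type. A secondary subtle point, needed for the hitting-time argument, is ruling out $X^{(i)}$ lingering at level $\epsilon$ on a positive-length interval; this is precisely where the strict monotonicity granted by the extra hypothesis is used.
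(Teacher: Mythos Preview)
Your overall strategy is sound and close in spirit to the paper's, but the two proofs differ in their truncation device and in how the uniform moment bound is obtained.

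\textbf{On the moment bound.} Both you and the paper extract, from the extra hypothesis, constants $c,\delta>0$ and $n_0$ such that $\mathbb{P}(X_n^{(j)}(1)\leq(1-\delta)n)\geq cn^{-\gamma}$ for all $n\geq n_0$ and all types $j$ (this is your ``main obstacle'' and corresponds to Lemma~\ref{lem:tightness} in the paper). The paper then couples $X_n^{(i)}$ with an explicit monotype chain and invokes the results of \cite{HM11} to get $\sup_n\mathbb E[(A_n^{(i)}/n^{\gamma})^a]<\infty$ for every $a$. Your geometric--sum domination is a valid alternative and actually yields a stronger conclusion (a uniform exponential moment); both suffice for uniform integrability.

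\textbf{On the joint convergence.} Here the approaches genuinely diverge. The paper does \emph{not} use level--hitting times $H_n^{\epsilon}$. Instead it exploits the already--proved Lemma~\ref{LemCoupe}: it shows tightness of the triplet, passes to a subsequential limit $(X^{(i)},Z^{(i)},\sigma^{(i)})$, gets $\sigma^{(i)}\geq I^{(i)}$ from Fatou applied to $A_n^{(i)}/n^{\gamma}=\int_0^\infty(Z_n^{(i)})^{\gamma}\,\mathrm dr$, and then bounds $\limsup_n\mathbb E[A_n^{(i)}/n^{\gamma}]$ by writing $A_n^{(i)}=T_n^{(i)}(p)+(A_n^{(i)}-T_n^{(i)}(p))$ and applying Markov at the $p$--th type--change time together with the uniform bound. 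Your hitting--time route is a legitimate alternative, and is in fact the device the paper uses later, in the mixing regime (see the end of Section~\ref{sec:continuoustime}).

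\textbf{A correction.} Your claim that the extra hypothesis makes $X^{(i)}$ \emph{strictly decreasing} on $[0,I^{(i)})$ is false: if for some type $k$ one has $\mu^{(k,k)}((0,1])=0$ and $\mu^{(k,k)}(\{1\})=0$ (which is compatible with the hypothesis when the required $j$ differs from $k$), then $X^{(i)}$ is constant on every type--$k$ interval. Fortunately you do not need strict monotonicity: a non--increasing c\`adl\`ag process has at most countably many plateau levels, so for all but countably many $\epsilon$ the first--passage functional $f\mapsto\inf\{t:f(t)\leq\epsilon\}$ is Skorokhod--continuous at $X^{(i)}$. This is exactly the fact invoked by the paper in Section~\ref{sec:continuoustime}; use that instead of the incorrect strict--monotonicity justification.
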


\bigskip

\noindent \textbf{Remark: possible extensions. 1.} The additional assumption of Theorem \ref{ThCriticalJoint} is not necessary. 
Other variants are possible but we will not treat them here. Let us simply note, for the interested reader, that the conclusions of Theorem \ref{ThCriticalJoint} are valid as  soon as $(\mathsf H_{\mathrm{cr}})$ is satisfied and for all $a\geq 0$ the moments $\mathbb E[(n^{-\gamma}A^{(i)}_n)^a]$ are uniformly bounded in $n$ -- see the proof of Theorem \ref{ThCriticalJoint} in Section \ref{sec:proofTh2}.

\noindent \textbf{2.} We believe that the two theorems above remain valid if we more generally assume that the measures $\mu^{(i,j)}$ are supported by $[0,1]$, instead of $(0,1]$. This more general setting includes cases where the limiting MAP may be killed. However, in this situation, the proofs require more work than the unkilled cases. Since we will not really need this generalization in applications, this fact is left as a remark and we focus here on cases where the measures $\mu^{(i,j)}$ are supported by $(0,1]$.

\bigskip

Before entering the proofs of Theorem \ref{ThCritical} and Theorem \ref{ThCriticalJoint}, we start by noticing in Section \ref{sec:foreword} that we can do additional assumptions on the model, without loss of generality. Assuming these additional assumptions, we show in Section \ref{sec:truncation} that for all positive integers $p$, the rescaled process $X_n$ killed at its $p$--th  type--change time converges to the  process $X^{(i)}$ killed at its $p$--th  type--change time, as well as related quantities. To see this, we use the results of \cite{HM11} to study the monotype processes on the type--constancy intervals and then ``glue" these processes together. Section  \ref{sec:proofTh1} is devoted to the proof of Theorem \ref{ThCritical} and Section \ref{sec:proofTh2} to that of Theorem \ref{ThCriticalJoint}.

\subsection{Foreword: additional assumptions}\label{sec:absorption}
\label{sec:foreword}

To simplify the proofs, we will do some additional assumptions on the model, without loss of generality. 

\medskip

\noindent \textbf{On absorbing states.} The marginal $X$ of the process $(X,J)$ may have different absorbing states. Note however that under $(\mathsf H_{\mathrm{cr}})$ its set of absorbing states if finite, otherwise there would exist a type $i$ such that $\mu^{(i,j)}((0,1])=0$ for all $j \in \{1,\ldots,\kappa\}$. Now, consider the process defined by
$$
X_n^{(i),\mathrm q}(k)=X_n^{(i)}(k) \mathbbm 1_{\{k \leq A_n^{(i)}\}}, \quad \forall k \in \mathbb N.
$$
Then $(X_n^{(i),\mathrm q},J_n^{(i)})$ is a Markov chain with transition probabilities defined by $q_{k,i}(m,j)=p_{k,i}(m,j)$ for each integer $k$ that is not absorbing, and then all types $i,j$ and all integers $m$, and $q_{a,i}(0,i)=p_{a,i}(a,i)$ for all absorbing states $a$ and all types $i$ (recall that by convention, $a$ is aborbing if there exist a type $i$ such that $p_{(a,i)}(a,i)=1$). Clearly, $X_n^{(i),\mathrm q}$ is absorbed at 0 at time $A_n^{(i)}$ or $A_n^{(i)}+1$, and $\sup_{k \geq 0}|X_n^{(i),\mathrm q}(k)-X_n^{(i)}(k)| \leq \max{\{a: a\text{ is absorbing}\}}$. Moreover, $(\mathsf H_{\mathrm{cr}})$ is clearly satisfied for the transition probabilities $(q_{n,i}(m,j))$ if it holds for the transition probabilities $(p_{n,i}(m,j))$. Consequently, if  Theorem \ref{ThCritical} and Theorem \ref{ThCriticalJoint} are proved for the process $(X_n^{(i),\mathrm q},J_n^{(i)})$, they will also hold for $(X_n^{(i)}, J_n^{(i)})$. 

\smallskip

Hence, 
\textbf{in the forthcoming proofs, we can and will assume that $X^{(i)}_n$ is always absorbed at 0}, with no loss of generality.

\bigskip

\noindent \textbf{On absorbing types in the limit.} Let
$$
\mathcal A^{\mathrm{type}}:=\left\{i\in \{1,\ldots,\kappa\}:\sum_{j \in \{1,\ldots,\kappa\}\backslash \{i\}} \mu^{(i,j)}((0,1])=0\right\}
$$
be the set of types that are absorbing in the limiting MAP $(-\log(Z))$. If $\mathcal A^{\mathrm{type}}$ is not empty, then the upcoming Lemma \ref{LemCoupe} will fail. In order to overcome this difficulty, we can create phantom types in that case. We detail the idea when  $\mathcal A^{\mathrm{type}}$ is reduced to one type and note that it generalizes immediately to the cases where  $\mathcal A^{\mathrm{type}}$ contains more types. So assume that $\mathcal A^{\mathrm{type}}=\{i_0\}$, create a new type $\kappa+1$ and set for all integers $n,m$: 
\begin{enumerate} 
\item[$\circ$] $p^*_{n,i_0}(m,i_0)=(1-n^{-\gamma})p_{n,i_0}(m,i_0)$
\item[$\circ$] $p^*_{n,i_0}(m,\kappa+1)=n^{-\gamma}p_{n,i_0}(m,i_0)$
\item[$\circ$] $p^*_{n,i_0}(m,j)=p_{n,i_0}(m,j)$ for $j\neq i_0,\kappa+1$
\item[$\circ$] $p^*_{n,\kappa+1}(m,\kappa+1)=(1-n^{-\gamma})p_{n,i_0}(m,i_0)$
\item[$\circ$] $p^*_{n,\kappa+1}(m,i_0)=n^{-\gamma}p_{n,i_0}(m,i_0)$
\item[$\circ$] $p^*_{n,\kappa+1}(m,j)=p_{n,i_0}(m,j)$ for $j\neq i_0,\kappa+1$
\item[$\circ$] $p^*_{n,i}(m,j)=p_{n,i}(m,j)$ for $i\neq i_0,\kappa+1$ and all types $j \in \{1,\ldots,\kappa\}$.
\end{enumerate}
One can clearly couple the construction of $(X_n^{(i)},J_n^{(i)})$ with that of a $\mathbb Z_+ \times \{1,\ldots,\kappa+1\}$--valued Markov chain $(X_n^{*,(i)},J_n^{*,(i)})$ with transition probabilities 
$(p^*_{(n,i)}(m,j))$ and such that $X_n^{(i)}=X_n^{*,(i)}$. These new transition probability satisfy $(\mathsf{H_{\mathrm{cr}}})$ with $\mu^{*,(i,j)}=\mu^{(i,j)}$ for all $i \neq i_0,\kappa+1, j \leq \kappa$, $\mu^{*,(i_0,j)}=\mu^{*,(\kappa+1,j)}=\mu^{(i_0,j)}=0$ for all  $j \neq i_0,\kappa+1$, $\mu^{*,(i_0,i_0)}=\mu^{*,(\kappa+1,\kappa+1)}=\mu^{(i_0,i_0)}$ and finally $\mu^{*,(i_0,\kappa+1)}=\mu^{*,(\kappa+1,i_0)}=\delta_1$. Hence the set of absorbing types  in the limiting MAP is here empty, and clearly, if Theorem \ref{ThCritical} and Theorem \ref{ThCriticalJoint} hold for the $*$--model, they also hold for the initial model. 

\smallskip

Hence, 
\textbf{in the forthcoming proofs, we can and will always assume that $\mathcal A^{\mathrm{type}}$ is empty}, with no loss of generality.

\subsection{Truncation}
\label{sec:truncation}

Before proving, strictly speaking, Theorem \ref{ThCritical}, we first study the asymptotic behavior of the process $(n^{-1}X^{(i)}_n(\lfloor n^{\gamma} \cdot \rfloor)$ killed at the first time when it changes type, then killed at the second time when it changes type, and so on. To this end, recall the definition of the absorption time $A_n^{(i)}$ in (\ref{DiscreteAbsorption}) and consider
$$T_{n}^{(i)}(1) := \inf\{k: J_n^{(i)}(k) \neq i \} \wedge A_n^{(i)}$$ the first time where either the process $X^{(i)}_n$ changes its type or is absorbed at 0, and let 
$X_n^{(i)}{|_1}$ be $X^{(i)}_n$ killed at time  $T_{n}^{(i)}(1)$, i.e.:
$$
X_n^{(i)}{|_1}(k)=X^{(i)}_n (k)\mathbbm 1_{\{k < T_{n}^{(i)}(1)\}}, \quad k \geq 0.
$$
We then define recursively $T_{n}^{(i)}(p)$ the $p$--th time at which $X^{(i)}_n$ change its type (with the convention that it is equal to $A_n^{(i)}$ if it reaches 0 before a $p$--th type change) and 
$$
X_n^{(i)}{|_p}(k)=X^{(i)}_n (k)\mathbbm 1_{\{k < T_{n}^{(i)}(p)\}},  \quad k \geq 0.
$$
Lastly, we define similarly the quantities $T^{(i)}(p)$, $X^{(i)}{|_{p}}$ for the limiting process $X^{(i)}$. Note in particular that $X^{(i)}{|_{1}}$ is the Lamperti transform of a standard subordinator with Laplace exponent
$$
\psi(q)=\sum_{j \in \{1,\ldots,\kappa\},j\neq i} \mu^{(i,j)}((0,1])+\mu^{(i,i)}(\{1\})q+\int_{(0,1)} (1-x^{q})\frac{\mu^{(i,i)}(\mathrm dx)}{1-x}.
$$
The goal of this section is to prove the following lemma. We recall that we have assumed that the set 
$\mathcal A^{\mathrm{type}}$ of absorbing types in the limit is empty.

\begin{lem}
\label{LemCoupe}
For all types $i \in \{1,\ldots, \kappa\}$ and all integers $p \geq 1$
\begin{eqnarray*}
\label{EqRec}
&& \left(\frac{X_n^{(i)}{|_p}(\lfloor n^{\gamma}  \cdot \rfloor)}{n}, \ \frac{T_n^{(i)}(p)}{n^{\gamma}}, \ \frac{X_n^{(i)}\big(T_n^{(i)}(p)-1\big)}{n}, \ \frac{X_n^{(i)}\big(T_n^{(i)}(p)\big)}{n}, \ J_n^{(i)}\big(T_n^{(i)}(p)\big) \right)\ \nonumber \\
&&\overset{\mathrm{(d)}}{\underset{n \rightarrow \infty}\longrightarrow} \left(X^{(i)}{|_{p}}, \ T^{(i)}(p), \ X^{(i)}\big(T^{(i)}(p)-\big), \ X^{(i)}\big(T^{(i)}(p)\big), \ J^{(i)}\big(T^{(i)}(p) \big) \right).
\end{eqnarray*}
Moreover, 
\begin{equation*} 
\mathbb E\left[\left(\frac{T_{n}^{(i)}(p)}{n^{\gamma}}\right)^a\right] \rightarrow \mathbb E\left[\left(T^{(i)}(p)\right)^a\right]  \quad \text{for all } a\geq 0.
\end{equation*}\end{lem}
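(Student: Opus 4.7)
The proof proceeds by induction on $p$. For the base case $p=1$, before time $T_n^{(i)}(1)$ the chain $X_n^{(i)}|_1$ is purely monotype, evolving under the transitions $p_{n,i}(\cdot,i)$, and is terminated at each step with a probability whose contributions come from type changes to $j\neq i$. The $j=i$ clause of $(\mathsf H_{\mathrm{cr}})$ is exactly the monotype hypothesis \eqref{HypMonotype} with $\mu=\mu^{(i,i)}$, while the $j\neq i$ clauses contribute an asymptotic killing rate of order $n^{-\gamma}\sum_{j\neq i}\mu^{(i,j)}((0,1])$. A direct extension of \cite{HM11} to killed monotype chains then yields the joint convergence of $\bigl(n^{-1}X_n^{(i)}|_1(\lfloor n^\gamma\,\cdot\,\rfloor),\,n^{-\gamma}T_n^{(i)}(1)\bigr)$ to $\bigl(X^{(i)}|_1,\,T^{(i)}(1)\bigr)$, where $X^{(i)}|_1$ is the Lamperti transform of the killed subordinator with the Laplace exponent $\psi$ displayed just before the lemma. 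The joint convergence with the endpoint positions $n^{-1}X_n^{(i)}(T_n^{(i)}(1)-1)$, $n^{-1}X_n^{(i)}(T_n^{(i)}(1))$ and the new type $J_n^{(i)}(T_n^{(i)}(1))$ is then obtained by analysing the distribution of the killing jump: conditional on the pre-killing position being $m$, the joint law of (new position, new type) is $p_{m,i}(\cdot,j)/\sum_{j'\neq i}P_m(i,j')$, whose asymptotics are governed by the ratios of measures $\mu^{(i,j)}$, $j\neq i$.

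Moment convergence at $p=1$ follows from a stochastic domination argument. Since $\mathcal A^{\mathrm{type}}=\emptyset$ (by the reduction of Section \ref{sec:foreword}), the per-step probability of a type change, starting from any position $m\in\{1,\ldots,n\}$ with type $i$, is at least $c\,n^{-\gamma}$ for some $c>0$ and all $n$ large. Hence $T_n^{(i)}(1)$ is stochastically dominated by a geometric random variable of parameter of order $n^{-\gamma}$, and its rescaled $a$-th moment is uniformly bounded in $n$. Combined with the convergence in distribution, this yields convergence of all positive moments.

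For the inductive step, assume the conclusion at level $p$. By the strong Markov property at $T_n^{(i)}(p)$, conditional on $(N,I):=\bigl(X_n^{(i)}(T_n^{(i)}(p)),J_n^{(i)}(T_n^{(i)}(p))\bigr)$ the chain after $T_n^{(i)}(p)$ is distributed as a fresh chain $(X_N^{(I)},J_N^{(I)})$. The induction hypothesis gives the joint convergence of $(N/n,I)$ with the previous variables to $(x,i'):=\bigl(X^{(i)}(T^{(i)}(p)),J^{(i)}(T^{(i)}(p))\bigr)$; crucially $x>0$ almost surely, since the measures $\mu^{(i,j)}$ place no mass at $0$ (so type-change jumps of $X^{(i)}$ do not send it to $0$) and between type changes $X^{(i)}$ is driven by a subordinator without killing, which stays finite in finite time. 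Passing to a Skorokhod representation we assume this convergence to be almost sure. Applying the base case to $(X_N^{(I)},J_N^{(I)})$ and converting the scaling from $(N,N^\gamma)$ to $(n,n^\gamma)$ via the approximate self-similarity
\[
\frac{X_N^{(I)}(\lfloor n^\gamma s\rfloor)}{n}\;=\;\frac{N}{n}\cdot\frac{X_N^{(I)}\!\left(\big\lfloor N^\gamma (n/N)^\gamma s\big\rfloor\right)}{N}\;\overset{\mathrm{(d)}}{\underset{n\rightarrow\infty}\longrightarrow}\;x\,X^{(i')}|_1\!\bigl(s\,x^{-\gamma}\bigr),
\]
one obtains the convergence of the continuation of the chain past $T_n^{(i)}(p)$, matching the law of $X^{(i)}|_{p+1}$ on $[T^{(i)}(p),T^{(i)}(p+1)]$ by the $\gamma$-self-similarity of $X^{(i')}$. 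Concatenating the two segments gives the joint convergence at level $p+1$. Moments propagate by Minkowski applied to $T_n^{(i)}(p+1)=T_n^{(i)}(p)+(T_n^{(i)}(p+1)-T_n^{(i)}(p))$ together with the conditional moment bound from the base case applied to the chain started from $(N,I)$, using that $N/n$ stays bounded away from $0$ with high probability (ultimately because $x>0$ a.s.). The main obstacle is precisely this approximate self-similarity step: it requires control of the base-case convergence that is uniform in the starting position $N\in[\varepsilon n,n]$, a uniformity not directly provided by the pointwise-in-$n$ hypothesis $(\mathsf H_{\mathrm{cr}})$. The standard remedy is to restrict to the event $\{N/n\geq\varepsilon\}$, extract uniformity on this event from a slight refinement of the convergence of the transition probabilities, and finally let $\varepsilon\to 0$ using the a.s.\ convergence $N/n\to x>0$.
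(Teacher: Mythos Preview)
Your overall inductive strategy matches the paper's. The genuine gap is in the base case: you assert that the convergence of $n^{-1}X_n^{(i)}(T_n^{(i)}(1)-1)$ to $X^{(i)}(T^{(i)}(1)-)$ follows ``by analysing the distribution of the killing jump'', but this left-limit at the absorption time does \emph{not} follow from the Skorokhod convergence of $n^{-1}X_n^{(i)}|_1(\lfloor n^\gamma\cdot\rfloor)$ and of $n^{-\gamma}T_n^{(i)}(1)$ alone. Indeed, Lemma~\ref{lemSko2} requires the extra input $\liminf_n n^{-1}X_n^{(i)}(T_n^{(i)}(1)-1)>0$ a.s., and the counterexample right after that lemma shows this can genuinely fail. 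The paper's fix is to introduce an auxiliary monotype chain $\tilde X_n^{(i)}$ which, instead of being killed at the type-change time, jumps to $\lfloor m/2\rfloor$ when the original chain would change type from position $m$. One couples so that $X_n^{(i)}|_1(k)=\tilde X_n^{(i)}(k)\mathbbm 1_{\{k<T_n^{(i)}(1)\}}$; since $\tilde X_n^{(i)}$ is not killed, its scaling limit $\tilde X^{(i)}$ is a.s.\ strictly positive at the (limiting) killing time $T^{(i)}(1)$ of $X^{(i)}|_1$, which yields $\liminf_n n^{-1}X_n^{(i)}(T_n^{(i)}(1)-1)\geq \tilde X^{(i)}(T^{(i)}(1))>0$ and unlocks Lemma~\ref{lemSko2}. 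Without this step you cannot get your hands on the pre-killing position, and the analysis of the killing jump does not get off the ground.

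Two smaller remarks. Your geometric-domination argument for the moments at $p=1$ only controls the type-change probability when the current position $m$ is large enough for the asymptotics of $(\mathsf H_{\mathrm{cr}})$ to apply; for small $m$ the hypothesis gives no lower bound. The paper sidesteps this by observing that $X_n^{(i)}|_1$ is itself a monotype chain satisfying the hypothesis of \cite{HM11} (with limiting measure $\mu^{(i,i)}+\sum_{j\neq i}\mu^{(i,j)}((0,1])\,\delta_0$), whose Theorem~2 directly gives convergence of all moments of $n^{-\gamma}T_n^{(i)}(1)$. Finally, the uniformity concern you raise in the inductive step is not a genuine obstacle: one only needs that $N:=X_n^{(i)}(T_n^{(i)}(p))\to\infty$ in probability, so that the base-case convergence applies to the restarted chain along the random sequence $N\to\infty$ (no uniformity over $m\in[\varepsilon n,n]$ is required, only convergence along every deterministic sequence $m_n\to\infty$). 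Similarly, the inductive moment bound needs only $N\leq n$, so that $(N/n)^{a\gamma}\leq 1$, not that $N/n$ is bounded away from~$0$.
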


\begin{proof} We proceed by induction on $p$. For $p=1$, the proof relies essentially on Theorems 1 and 2  in \cite{HM11}. The induction then uses the Markov property of the process $(X_n^{(i)},J_n^{(i)})$.

\medskip

\noindent $\bullet$ First, let $p=1$ and note that (for all types $i$) the transition probabilities of the chain $X^{(i)}|_1$ are
$$
q_n^{(i)}(m):=p_{n,i}(m,i) \quad \text{for } m \neq 0  
$$
and
$$
q_n^{(i)}(0):=p_{n,i}(0,i)+\sum_{j \neq i} \sum_{m \geq 0} p_{n,i}(m,j).
$$
By ($\mathsf H_{\mathrm{cr}}$), 
$$
n^{\gamma} \sum_{k\geq0} f\left(\frac{k}{n}\right) \left(1-\frac{k}{n}\right) q_n^{(i)}(k) \underset{n\rightarrow \infty} \longrightarrow \int_{(0,1]} f(x)\mu^{(i,i)}(\mathrm dx) + f(0) \sum_{j \neq i} \mu^{(i,j)}((0,1]).
$$
Consequently, $X_n^{(i)}|_1$ a monotype Markov chain whose transition probabilities satisfy the hypothesis of Theorem 1 and Theorem 2  in \cite{HM11} and we get that
$$
\left(\left(\frac{X_n^{(i)}{|_1}(\lfloor n^{\gamma} t\rfloor)}{n},t\geq 0\right), \frac{T_{n}^{(i)}(1)}{n^{\gamma}}\right) \overset{(\mathrm d)}{\underset{n\rightarrow \infty}\longrightarrow}   \left(X^{(i)}{|_1}, T^{(i)}(1)\right)
$$
together with the convergence of all positive moments of $n^{-\gamma}T_{n}^{(i)}(1)$ towards those of $T^{(i)}(1)$.

\medskip

\noindent $\bullet$ Second, we would like to apply Lemma \ref{lemSko2} to get that
\begin{equation}
\label{TempsPrecedent}
\left(\left(\frac{X_n^{(i)}{|_1}(\lfloor n^{\gamma} t\rfloor)}{n},t\geq 0\right), \frac{T_{n}^{(i)}(1)}{n^{\gamma}}, \frac{X_n^{(i)}\big(T_{n}^{(i)}(1)-1\big)}{n}\right) \overset{(\mathrm d)}{\underset{n \rightarrow \infty} \longrightarrow}   \left(X^{(i)}{|_1}, T^{(i)}(1), X^{(i)}\big(T^{(i)}(1)-\big)\right).
\end{equation}
The proof is not immediate because it is not clear that we are always in situations (i) or (ii) of Lemma \ref{lemSko2}. We need to introduce another Markov chain $\big(\tilde X_n^{(i)}\big)$ on $\mathbb Z_+$ with transition probabilities
$$
\tilde p_n(m):=p_{n,i}(m,i) \quad \text{for } m \neq  \left\lfloor \frac{n}{2} \right\rfloor 
$$
and
$$
\tilde p_n\left(\left\lfloor \frac{n}{2} \right\rfloor \right):=p_{n,i}\left(\left\lfloor \frac{n}{2} \right\rfloor ,i\right)+\sum_{j \neq i} \sum_{m \geq 0} p_{n,i}(m,j).
$$
Let $\tilde A^{(i)}_n$ denote its absorption time. 
By ($\mathsf H_{\mathrm{cr}}$) and Theorem 1 and Theorem 2 in \cite{HM11}, \linebreak $\big(\tilde X_n^{(i)}(\lfloor n^{\gamma} \cdot \rfloor/n,\tilde A^{(i)}_n/n^{\gamma}\big)$ converges to $(\tilde X^{(i)},\tilde I^{(i)})$ where $\tilde X^{(i)}$ is a self--similar Markov process  with characteristics
$$
\tilde \psi(q)=\mu^{(i,i)}(\{1\})q+\int_{(0,1)} \frac{1-x^{q}}{1-x}\left(\mu^{(i,i)}(\mathrm dx) +\frac{1}{2} \sum_{j\neq i} \mu^{(i,j)}((0,1]) \delta_{\frac{1}{2}} (\mathrm dx)\right),
$$
and $\tilde I^{(i)}$ is the time at which it is absorbed at 0. 
In fact, clearly, one can construct a joint version of the pair $\big(X_n^{(i)}{|_1},\tilde X_n^{(i)}\big)$ such that
$$
X_n^{(i)}{|_1}(k) = \tilde X_n^{(i)}(k) \mathbbm 1_{\{k <T_n^{(i)}(1)\}}.
$$ 
Together with the convergence of the rescaled process $X_n^{(1)}|_1$ settled above, this implies that 
$$\left(\left(\frac{X_n^{(i)}{|_1}(\lfloor n^{\gamma} t\rfloor)}{n},t\geq 0\right), \frac{T_{n}^{(i)}(1)}{n^{\gamma}},\left(\frac{\tilde X_n^{(i)}(\lfloor n^{\gamma} t\rfloor)}{n},t\geq 0\right), \frac{\tilde A_{n}^{(i)}}{n^{\gamma}}\right) $$ converges in distribution towards a quadruplet $\big(X,I, \tilde X, \tilde I \big)$, where $(X,\tilde X)$ is a coupling of $\gamma$--self--similar Markov processes such that $X(t) = \tilde X(t) \mathbbm 1_{\{t<I\}}$ for all $t \geq 0$, and $I<\tilde I$ a.s. To see that $I < \tilde I$ a.s., note that if $(\xi(t)\mathbbm 1_{\{t<T\}}+\infty \mathbbm 1_{\{t \geq T\}}, t\geq 0)$ denotes the underlying subordinator of $X$, which is killed at rate $\sum_{j \neq i} \mu^{(i,j)}((0,1])>0$, then the underlying subordinator of $\tilde X$ is $\xi+\tilde \xi$ where $\tilde \xi$ is a subordinator independent of $\xi$ whose first jump arises at time $T$. Next, assume, using Skorokhod's representation theorem, that the convergence of the quadruplet holds a.s. We then get that, a.s.,
\begin{equation}
\label{cvinfty}
\liminf_{n \rightarrow \infty} \frac{X_n^{(i)}\big(T_{n}^{(i)}(1)-1\big)}{n}=\liminf_{n \rightarrow \infty} \frac{\tilde X_n^{(i)}\big(T_{n}^{(i)}(1)-1\big)}{n} \geq \tilde X(I)>0.
\end{equation}
This, together with Lemma \ref{lemSko2} indeed gives (\ref{TempsPrecedent}).

\bigskip

\noindent $\bullet$ Third, we  immediately have by the Markov property of $(X,J)$, together with $(\mathsf H_{\mathrm{cr}})$, that conditionally on $\big(T^{(i)}_n(1),X^{(i)}_n(k), k \leq T^{(i)}_n(1)-1\big)$, and since $X^{(i)}_n(T^{(i)}_n(1)-1) \overset{\mathbb P}\rightarrow \infty$  (by (\ref{cvinfty})), that
$$
\left(\frac{X^{(i)}_n(T^{(i)}_n(1))}{X^{(i)}_n(T^{(i)}_n(1)-1)}, J_n^{(i)}\big(T^{(i)}_n(1)\big) \right)\overset{\mathrm{(d)}}{\underset{n \rightarrow \infty}\longrightarrow} \left(S_{\infty}, J^{(i)}\big(T^{(i)}(1)\big) \right),
$$
where the law of the limit is given by 
$$\mathbb E\left[f\left(S_{\infty},J^{(i)}\big(T^{(i)}(1)\big)\right) \right] = \frac{\sum_{j \neq i} \int_{(0,1]} f(x,j)\mu^{(i,j)}(\mathrm d x)}{\sum_{j \neq i} \mu^{(i,j)}((0,1])}.$$ Together with (\ref{TempsPrecedent}), and  the convergence of all positive moments of $n^{-\gamma}T_{n}^{(i)}(1)$ already mentioned, this finally proves the lemma for $p=1$.

\bigskip

\noindent $\bullet$ Now assume that the lemma is proved for all $q \leq p$ and fix a type $i$. In particular, we have that
$$
C_n:=\left(\left(\frac{X_n^{(i)}{|_p}(\lfloor n^{\gamma} t\rfloor)}{n},t\geq 0\right), \frac{T_n^{(i)}(p)}{n^{\gamma}}, \frac{X_n^{(i)}\big(T_{n}^{(i)}(p)-1\big)}{n}, \frac{X_n^{(i)}\big(T_{n}^{(i)}(p)\big)}{n}, J_n^{(i)}\big(T_n^{(i)}(p)\big) \right) 
$$
converges in distribution towards
$$
C:=\left(X^{(i)}{|_{p}}, T^{(i)}(p), X^{(i)}\big(T^{(i)}(p)-\big), X^{(i)}(T^{(i)}(p)\big), J^{(i)}\big(T^{(i)}(p)\big)\right).
$$
Set also
\begin{eqnarray*}
D_n&:=& \left(\left(\frac{X_n^{(i)}{|_{p+1}}(\lfloor T_n^{(i)}(p)+n^{\gamma}t\rfloor)}{n},t\geq 0 \right), \frac{T_n^{(i)}(p+1)-T_n^{(i)}(p)}{n^{\gamma}}, \frac{X_n^{(i)}(T_n^{(i)}(p+1)-1)}{n} \right., \\
&&\left.\frac{X_n^{(i)}(T_n^{(i)}(p+1))}{n},  J_n^{(i)}\big(T_n^{(i)}(p+1)\big)\right),
\end{eqnarray*}
and for all types $j$
\begin{eqnarray*}
D^{(j)}&:=&\left(\left(X^{(i)}(T^{(i)}(p)) \overline X^{(j)}{|_{1}}(X^{(i)}\big(T^{(i)}(p))^{-\gamma} t\big),t\geq 0\right), \big(X^{(i)}(T^{(i)}(p))\big)^{\gamma} \overline T^{(j)}(1), \right. \\  
&& X^{(i)}\big(T^{(i)}(p)\big) \overline X^{(j)}\big(T^{(j)}(1)-\big), \left. X^{(i)}\big(T^{(i)}(p)\big) \overline X^{(j)}\big(T^{(j)}(1)\big), \overline J^{(j)}\big(\overline T^{(j)}(1)\big)\right)
\end{eqnarray*}
with $\big(\overline X^{(j)},\overline J^{(j)}\big)$ independent of $\big(X^{(i)},J^{(i)}\big)$ and distributed as $\big(X^{(j)},J^{(j)}\big)$.

Then apply the strong Markov property at the stopping time $T_n^{(i)}(p)$ together with the fact that the lemma holds for $q=1$ and that $C_n$ converges in distribution towards $C$, to get that
$$
\left(C_n,D_n\right)\overset{(\mathrm d)}{\underset{n \rightarrow \infty}\longrightarrow} \big(C, D^{(J^{(i)}(T^{(i)}(p))} \big).
$$
Here we have used the fact that $X_n^{(i)}(T_n^{(i)}(p)) \overset{\mathbb P} \rightarrow \infty$, which is due to the convergence in distribution of this quantity divided by $n$ to $X^{(i)}(T^{(i)}(p))$, which is a.s. strictly positive, since the limiting  MAP changes its type infinitely often since $\mathcal A^{\mathrm{type}}=\emptyset$ and there is no killing.
Lastly, gluing the pieces thanks to Lemma \ref{lemSko1} leads to the statement of the first part of the lemma for $p+1$.

It remains to prove the convergence of all positive moments of $T_n^{(i)}(p+1)/n^{\gamma}$. Since we already know that this r.v. converges in distribution to $T^{(i)}(p+1)$, the convergence of moments will be proved if we check that
$$
\sup_{n\geq 1} \mathbb E \left[\left(\frac{T_n^{(i)}(p+1)}{n^{\gamma}} \right)^a \right]<\infty, \quad \forall a \geq 0.
$$
This is a direct consequence of the induction hypothesis and the fact that
$$
\sup_{n\geq 1} \mathbb E \left[\left(\frac{T_n^{(i)}(p+1)}{n^{\gamma}} \right)^a\right] \leq c_a \left(\sup_{n\geq 1} \mathbb E \left[\left(\frac{T_n^{(i)}(p)}{n^{\gamma}} \right)^a\right]+\sup_{n\geq 1}\mathbb E\left[\left(\frac{T_n^{(i)}(p+1)-T_n^{(i)}(p)}{n^{\gamma}} \right)^a\right]\right)
$$
for some finite $c_a$. Indeed, on the right--hand side the first supremum is finite, applying the induction hypothesis at $p$. And the second supremum is also finite, by the Markov property and the induction hypothesis applied at the initial rank $1$.
\end{proof}

\subsection{Proof of Theorem \ref{ThCritical}: scaling limits of the position marginal}
\label{sec:proofTh1}

Let
$$
Y_n^{(i)}(t):=\frac{X_n^{(i)}(\lfloor n^{\gamma}t\rfloor)}{n} \quad \text{and} \quad Y_n^{(i)}|_p(t):=\frac{X_n^{(i)}|_p(\lfloor n^{\gamma}t\rfloor)}{n}
$$
and note that the second process can be interpreted as $Y_n^{(i)}$ killed at its $p$--th type change, which is denoted by $T^{Y,(i)}_n(p)$ (and equal to $T^{(i)}_n(p)/n^{\gamma}$).

\bigskip

\noindent \textit{Proof of Theorem \ref{ThCritical}.} $\bullet$ Consider any Lipschitz function $f:\mathcal D([0,\infty),[0,\infty)) \rightarrow [0,\infty)$, say with Lipschitz constant $c_f$, where for distance on $\mathcal D([0,\infty),[0,\infty))$ we consider the classical distance metrizing the Skorokhod topology, see formula (16.4) in Billingsley \cite{Bill99} for a precise definition. We denote this distance by $d_{\mathrm{sko}}$ and recall that it is smaller than the uniform distance on $[0,\infty)$. 
Our goal is to show that $$\mathbb E[f(Y_n^{(i)})] \underset{n \rightarrow \infty}\rightarrow \mathbb E[f(X^{(i)})].$$
If proved for any Lipschitz function $f$, this will ensure that 
$$
Y_n^{(i)} \underset{n \rightarrow \infty}{\overset{(\mathrm d)}\longrightarrow} X^{(i)}.
$$
So fix $f$, let $\varepsilon>0$ and take an integer  $p$ so that $\mathbb E[X^{(i)}(T^{(i)}(p))] \leq \varepsilon$ (such a $p$ exists, by dominated convergence and since $X^{(i)}(T^{(i)}(p))$ converges to 0 -- see (\ref{eq:cvchgttype})). Then by Lemma \ref{LemCoupe},  we have that 
$\mathbb E[Y_n(T_n^{Y,(i)}(p))] \leq 2 \varepsilon$ for all $n$ large enough, say for $n \geq n_{\varepsilon,p}$. Thus,
\begin{eqnarray*}
\left|\mathbb E \left[f(Y_n^{(i)})- f(Y_n^{(i)}|_p)\right]\right| &\leq& c_f \mathbb E \left[d_{\mathrm{Sko}}(Y_n^{(i)},Y_n^{(i)}|_p)\right] \\
&\leq& c_f \mathbb E \left[Y_n^{(i)}(T_n^{Y,(i)}(p))\right] \leq 2c_f\varepsilon, \text{ for } n \geq n_{\varepsilon,p}.
\end{eqnarray*}
(The second inequality is due to the fact that $d_{\mathrm{Sko}}$ is smaller than the uniform distance on $[0,\infty)$ and the fact that $Y_n^{(i)},Y_n^{(i)}|_p$ coincide on $[0,T_n^{Y,(i)}(p))$, together with $Y_n^{(i)}|_p$ is null on $[T_n^{Y,(i)}(p), \infty)$ and $Y_n^{(i)}$ is non--increasing.) Similarly, we get that
$$
\left|\mathbb E \left[f(X^{(i)})- f(X^{(i)}|_p)\right]\right| \leq c_f\varepsilon.
$$
This entails that
\begin{eqnarray*}
\left|\mathbb E \left[f(Y_n^{(i)})- f(X^{(i)})\right]\right| \leq 3c_f\varepsilon + \left|\mathbb E \left[f(Y_n^{(i)}|_p)- f(X^{(i)}|_p)\right]\right|.
\end{eqnarray*}
Besides, by Lemma \ref{LemCoupe}, $Y_n^{(i)}|_p \rightarrow X^{(i)}|_p$ in distribution, so  
finally, we have proven that for all $\varepsilon>0$ and then all $n$ large enough,
$$
\left|\mathbb E \left[f(Y_n^{(i)})- f(X^{(i)})\right]\right| \leq (3 c_f +1) \varepsilon.
$$

\bigskip

\noindent $\bullet$ The convergence of the pair $\big(Y_n^{(i)},  Z_n^{(i)}\big)$ to $\left(X^{(i)},  Z^{(i)}\right) $ is then a consequence of Lemma \ref{lampsko}. $\hfill \square$

\subsection{Proof of Theorem \ref{ThCriticalJoint}: scaling limit of the absorption time}
\label{sec:proofTh2}

We start by proving the following lemma, using a coupling with a monotype Markov chain, and then turn to the proof of Theorem \ref{ThCriticalJoint}.

\begin{lem}
\label{lem:tightness}
Assume $(\mathsf H_{\mathrm{cr}})$ and that for all types $i \in \{1,\ldots ,\kappa\}$, there exists a type $j$ such that $\mu^{(i,j)}((0,1))>0$. Then for all types $i \in \{1,\ldots,\kappa\}$ and all $a \geq 0$,
$$
\sup_{n \in \mathbb N}\mathbb E\left[\left(\frac{A_n^{(i)}}{n^{\gamma}}\right)^a\right]<\infty.
$$
In particular, the sequence $\big(n^{-\gamma}A_n^{(i)}\big)$ is tight, $\forall i \in \{1,\ldots ,\kappa\}$.
\end{lem}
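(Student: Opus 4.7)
The plan is to reduce the problem to a dyadic decomposition of the trajectory and bound the waiting times between successive macroscopic drops by geometric random variables, then conclude via Minkowski's inequality.

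The first step is a uniform big-drop estimate: I would show that there exist $\delta\in(0,1)$, $c>0$ and $n_0\in\mathbb N$ such that for all $n\geq n_0$ and all types $i\in\{1,\ldots,\kappa\}$,
$$
\mathbb P\big(X_n^{(i)}(1)\leq (1-\delta)n\big)\geq c\,n^{-\gamma}.
$$
For each $i$, one picks a type $j=j_i$ with $\mu^{(i,j_i)}((0,1))>0$ and a compact interval $[a_i,b_i]\subset(0,1)$ with $\mu^{(i,j_i)}([a_i,b_i])>0$, then applies $(\mathsf H_{\mathrm{cr}})$ to a continuous $f$ with $0\leq f\leq \mathbbm 1_{[a_i,b_i]}$. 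Since $0\leq 1-(m/n)\mathbbm 1_{j_i=i}\leq 1$, the quantity $n^{\gamma}\sum_{m\in[a_in,b_in]\cap\mathbb Z_+} p_{n,i}(m,j_i)$ dominates the sum appearing in $(\mathsf H_{\mathrm{cr}})$, hence is eventually $\geq \tfrac12\int f\,\mathrm d\mu^{(i,j_i)}>0$. Taking $\delta=\min_i(1-b_i)$ and the minimum over the finite type space gives uniform constants.

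Next I would set $S_0=0$ and recursively $S_k=\inf\{t>S_{k-1}:X_n^{(i)}(t)\leq (1-\delta)X_n^{(i)}(S_{k-1})\}$. By monotonicity, $X_n^{(i)}(S_k)\leq (1-\delta)^k n$, so after $K_n:=\lceil\log(n/n_0)/\log(1/(1-\delta))\rceil+1$ of these drops the position is at most $n_0$. The key observation is that any one-step big drop occurring during $(S_{k-1},S_k]$ automatically triggers $S_k$, again by monotonicity, so combining the strong Markov property with the uniform estimate shows that each increment $S_k-S_{k-1}$ is stochastically dominated by a geometric random variable $G_k$ of parameter $c\,((1-\delta)^{k-1}n)^{-\gamma}$. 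Once the chain reaches $\{X\leq n_0\}$, its remaining lifetime is dominated by a random variable $T_0$ with $\mathbb E[T_0^a]<\infty$ for every $a$: the restriction of $(X,J)$ to $\{0,\ldots,n_0\}\times\{1,\ldots,\kappa\}$ has finite state space and reaches its absorbing set geometrically fast from every state.

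Finally, for $a\geq 1$, Minkowski's inequality yields
$$
\|A_n^{(i)}\|_a\leq \sum_{k=1}^{K_n}\|G_k\|_a+\|T_0\|_a \leq C_a\,n^{\gamma}\sum_{k\geq 1}(1-\delta)^{(k-1)\gamma}+\|T_0\|_a = \bigO(n^{\gamma})
$$
uniformly in $n\geq n_0$, since the geometric series converges. The case $a\in[0,1)$ follows from $a=1$ via Jensen's inequality, and finitely many small $n$'s are handled using the finite-state structure of the chain. Tightness is then Markov's inequality. I expect the main technical point to be the first step, where one must carefully accommodate the weight $1-(m/n)\mathbbm 1_{j=i}$ in $(\mathsf H_{\mathrm{cr}})$ (which provides an upper rather than a lower bound on the raw transition probabilities) and make the constants uniform in the initial type; the rest is a fairly direct application of the strong Markov property and Minkowski's inequality.
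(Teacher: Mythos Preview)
Your argument is correct, and the first step (the uniform big-drop estimate) is essentially the same as the paper's: both of you extract from $(\mathsf H_{\mathrm{cr}})$ and the extra hypothesis constants $\delta\in(0,1)$, $c>0$, $n_0$ such that $\mathbb P(X_n^{(i)}(1)\leq(1-\delta)n)\geq cn^{-\gamma}$ for all $n\geq n_0$ and all types. Your handling of the weight $1-(m/n)\mathbbm 1_{\{j=i\}}$ is fine --- it is at most $1$, so the raw transition mass over $[a_in,b_in]$ dominates the weighted sum whose limit is positive.

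The two proofs diverge after that. The paper does not introduce your stopping times $S_k$; instead it builds a single auxiliary monotype chain $Y_n$ on $\mathbb Z_+$ with transitions $q_n(\lfloor rn\rfloor)=cn^{-\gamma}$, $q_n(n)=1-cn^{-\gamma}$ for $n$ large (and a simple deterministic drift below a threshold), couples so that $X_n^{(i)}(k)\leq Y_n(k)$ for all $k$, and then invokes the monotype scaling results of \cite{HM11} as a black box to get $\sup_n\mathbb E[(A_{Y,n}/n^{\gamma})^a]<\infty$, hence the same for $A_n^{(i)}$. Your dyadic decomposition with geometric waiting times and Minkowski is in effect a direct, self-contained proof of the moment bound for exactly such a $Y_n$ --- the geometric series $\sum_k(1-\delta)^{(k-1)\gamma}$ you obtain is precisely what controls the absorption time of the paper's coupling chain. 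So the paper's route is shorter because the analytic work is outsourced to \cite{HM11}, while yours is more transparent and does not rely on that reference. One small point worth making explicit in your write-up: above $n_0$ there are no absorbing states (the set of absorbing states is finite under $(\mathsf H_{\mathrm{cr}})$, so $n_0$ can be chosen above all of them), which guarantees that each $S_k$ is reached in finite time while the chain is still above $n_0$.
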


\begin{proof}
Since the number of types is finite, our additional assumption implies the existence of $r \in (0,1)$ such that $$\sum_{j \in \{1,\ldots,\kappa\}} \mu^{(i,j)}((0,r))>0, \quad \text{for all types } i \in \{1,\ldots ,\kappa\}.$$ By ($\mathsf H_{\mathrm{cr}}$), this in turn implies the existence of $c \in (0,1)$ and $n_0 \in \mathbb N$ such that
\begin{equation}
\label{eq:couplage}
\sum_{k=0}^{\lfloor a n \rfloor} p_{n}^{(i)}(k) \geq \frac{c}{n^{\gamma}}, \quad \forall n \geq n_0, \text{ }\forall i \in \{1,\ldots ,\kappa\}
\end{equation}
(we recall that $p_n^{(i)}(k)=\sum_{j \in \{1,\ldots ,\kappa\}}p_{n,i}(k,j)$ is the transition probability of the position marginal $X$ when its type is $i$).
Besides, the chain $X$ is assumed to be always absorbed at 0. Hence,  for  all $\ell \in \{1,\ldots ,n_0-1\}$ and all types $i \in \{1,\ldots ,\kappa\}$, there exists a non--negative integer $k_{\ell,i}\leq \ell-1$ such that $p_{\ell}^{(i)}(k_{\ell,i})>0$. We let
\begin{equation}
\label{eq:couplage2}
d:=\min_{(\ell,i) \in  \{1,\ldots ,n_0-1\} \times \{1,\ldots ,\kappa\}} p_{\ell}^{(i)}(k_{\ell,i})>0.
\end{equation}
Consider now a $\mathbb Z_+$--valued Markov chain $Y$ with transition probabilities
$$
\begin{array}{ll}
\vspace{0.15cm}
\circ \hspace{0.2cm} q_n(\lfloor r n \rfloor)=cn^{-\gamma} & \forall \ n \geq \lceil \frac{n_0}{r} \rceil \\ 
\vspace{0.15cm}
\circ \hspace{0.2cm} q_n(n)=1- cn^{-\gamma} & \forall \ n \geq \lceil \frac{n_0}{r} \rceil \\ 
\vspace{0.15cm}
\circ \hspace{0.2cm} q_n(n-1)=d & \forall \ 1 \leq n <\lceil \frac{n_0}{r} \rceil  \\
\circ \hspace{0.2cm} q_n(n)=1-d & \forall \ 1 \leq n <\lceil \frac{n_0}{r} \rceil 
\end{array}
$$
(and $q_0(0)=1$) and let $Y_n$ denote a version of the chain $Y$ starting from $n$. Fix a type $i$. Using (\ref{eq:couplage}), (\ref{eq:couplage2}) and the fact that $n \mapsto cn^{-\gamma}$ is decreasing, it is easy to see that one can couple the construction of the chains $(X^{(i)}_n,J^{(i)}_n)$ and $Y_n$ such that $$X^{(i)}_n(k) \leq Y_n(k), \quad  \text{for all } k \in \mathbb Z_+.$$ Note that the chain $Y_n$ is necessarily absorbed at 0, so we also have that
$$
A_n^{(i)}\leq A_{Y,n}
$$
where $A_{Y,n}$ is the absorption time of $Y_n$.
Moreover, clearly, 
$$
n^{\gamma}  \sum_{m=0}^n f\left(\frac{m}{n}\right)\left(1-\frac{m}{n} \right)q_n(m) \underset{n\rightarrow \infty}\longrightarrow c(1-r)f\left(r\right)
$$
for all continuous functions $f:[0,1] \rightarrow \mathbb R$.
So we are exactly in the conditions of the monotype setting studied in \cite[Theorems 1 and 2]{HM11}. In particular, we know that there is a positive r.v. $I_Y$ with all positive moments finite ($I_Y$ is the absorption time of the  self--similar process arising as scaling limit of $Y_n$) such that
$$
\frac{A_{Y,n}}{n^{\gamma}} \underset{n\rightarrow \infty}{\overset{\mathrm{(d)}}\longrightarrow} I_Y \quad \text{and} \quad  \mathbb E\left[\left(\frac{A_{Y,n}}{n^{\gamma}}\right)^a\right] \underset{n\rightarrow \infty}\longrightarrow \mathbb E\left[ (I_Y)^a\right].
$$
Since $A_n^{(i)}\leq A_{Y,n}$ for all $n$, this leads to the statement of the lemma.
\end{proof}

\bigskip

\noindent \textit{Proof of Theorem  \ref{ThCriticalJoint}}. The initial type $i$ is fixed. Theorem \ref{ThCritical} together with Lemma \ref{lem:tightness} imply the tightness of 
\begin{equation}
\label{eq:triplet}
\left(\frac{X_n^{(i)}(\lfloor n^{\gamma} \cdot \rfloor)}{n}, Z_n^{(i)}, \frac{A_n^{(i)}}{n^{\gamma}}\right), \ n \geq 1.
\end{equation}
Consider then a converging subsequence, indexed, say, by ($\psi(n)$), that converges to a limit denoted by $(X^{(i)},Z^{(i)},\sigma^{(i)})$. Since Theorem \ref{ThCritical} is already proven, the only thing we have to check is that $\sigma^{(i)}=I^{(i)}$ a.s., with $I^{(i)}$ the extinction time of $X^{(i)}$. Indeed, if this holds for all converging subsequences, this will 
\begin{enumerate}
\item[(1)] imply the convergence in distribution of (\ref{eq:triplet})  to $(X^{(i)},Z^{(i)},I^{(i)})$
\item[(2)] imply the convergence of all positive moments of $n^{-\gamma}A_n^{(i)}$ to those of $I^{(i)}$ (which are then necessarily finite), by using the convergence of $n^{-\gamma}A_n^{(i)}$ to $I^{(i)}$ together with the bounds of Lemma \ref{lem:tightness}.
\end{enumerate}
So, now, consider the converging subsequence indexed by ($\psi(n)$).
By the Skorokhod representation theorem, we may assume that the convergence holds almost surely. Then, note that 
\[
\frac{A_{\psi(n)}^{(i)}}{\psi(n)^{\gamma}}=\int_0^{\infty}\Big(Z_{\psi(n)}^{(i)}\Big)^{\gamma}(r) \mathrm dr.
\]
Hence we have in the limit, by Fatou's lemma, that $\sigma^{(i)} \geq I^{(i)}=\int_0^{\infty}\big(Z^{(i)}\big)^{\gamma}(r) \mathrm dr$ a.s. 

\vspace{0.07cm}

To prove that $\sigma^{(i)} = I^{(i)}$ a.s., it is now sufficient to show that $\mathbb E[\sigma^{(i)}] \leq \mathbb E[I^{(i)}]$. Note that, by Fatou's lemma again,
$$
\mathbb E[\sigma^{(i)}] \leq \liminf_n \mathbb E\left[ \frac{A_{\psi(n)}^{(i)}}{\psi(n)^{\gamma}}\right] \leq  \limsup_n \mathbb E\left[ \frac{A_{n}^{(i)}}{n^{\gamma}}\right]
$$
so it is actually enough to show that the latter $\limsup$ is smaller than $\mathbb E[I^{(i)}]$.
Recall that we have assumed that the set of absorbing types $\mathcal A^{\mathrm{type}}$ is empty. 
Fix $\varepsilon>0$ and then $p$ large enough so that
$$
\mathbb E\left[ \Big(X^{(i)}\big(T^{(i)}(p)\big)\Big)^{\gamma}\right] \leq \varepsilon
$$
(recall that $T^{(i)}(p)$ is the $p$--th time of type change in $X^{(i)}$ and recall (\ref{eq:cvchgttype})). 
By Lemma \ref{LemCoupe}, we know that $n^{-1}X^{(i)}_n\big(T^{(i)}_n(p)\big)$ converges in distribution to $X^{(i)}\big(T^{(i)}(p)\big)$, which implies
\begin{equation}
\label{eq:majoeps}
\limsup_n \mathbb E\left[ \left(\frac{X^{(i)}_n\big(T_n^{(i)}(p)\big)}{n}\right)^{\gamma}\right]
\leq 2\varepsilon.
\end{equation}
By Lemma \ref{LemCoupe} again,  the expectation of $n^{-\gamma}T^{(i)}_n(p)$ converges to that of $T^{(i)}(p)$ which implies
\begin{equation}
\label{ineg1}
\mathbb E \left[\frac{T^{(i)}_n(p)}{n^{\gamma}}\right] \leq \varepsilon + \mathbb E[T^{(i)}(p)] \leq \varepsilon + \mathbb E[I^{(i)}], \quad \text{for all $n$ large enough}.
\end{equation}
Then, the Markov property at (the stopping time) $T_n^{(i)}(p)$ implies that
$$
A_n^{(i)}-T_n^{(i)}(p)=\tilde A^{J_n^{(i)}(T_n^{(i)}(p))}_{X^{(i)}_n(T_n^{(i)}(p))},
$$
where given $(X^{(i)}_n(T_n^{(i)}(p)),J^{(i)}_n(T_n^{(i)}(p)))=(m,j)$,  the r.v. in the right--hand side is distributed as $A^{(j)}_{m}$. Note that
\begin{eqnarray*}
\mathbb E\left[\frac{\tilde A^{J_n^{(i)}(T_n^{(i)}(p))}_{X^{(i)}_n(T_n^{(i)}(p))}}{n^{\gamma}} \right]&=&\mathbb E\left[\frac{\tilde A^{J_n^{(i)}(T_n^{(i)}(p))}_{X^{(i)}_n(T_n^{(i)}(p))}}{\big(X^{(i)}_n(T_n^{(i)}(p))\big)^{\gamma}}\times \frac{\big(X^{(i)}_n(T_n^{(i)}(p))\big)^{\gamma}}{n^{\gamma}} \right] \\
&\leq& c2 \varepsilon
\end{eqnarray*}
for all $n$ large enough, where $c=\sup_{m \in \mathbb N, j \in \{1,\ldots,\kappa\}}\mathbb E\big[m^{-\gamma}A_m^{(j)}\big]$ is finite, by Lemma \ref{lem:tightness}. Then, writing $A^{(i)}_n=A^{(i)}_n-T_n^{(i)}(p) + T_n^{(i)}(p)$ and recalling (\ref{ineg1}), we get 
$$
\mathbb E \left[\frac{A^{(i)}_n}{n^{\gamma}}\right] \leq 2c\varepsilon + \mathbb E\left[\frac{T_n^{(i)}(p)}{n^{\gamma}}\right] \leq (2c+1)\varepsilon + \mathbb E[I^{(i)}]
$$
for $n$ large enough, which leads to the expected $\limsup \mathbb E [n^{-\gamma}A^{(i)}_n] \leq  \mathbb E[I^{(i)}]$.
$\hfill \square$


\section{Mixing regime}
\label{SecMixing}


In this section, we assume that the rate of type change is much larger than that of macroscopic jumps. We recall the notations $P_n$ and $p_n^{(i)}(m)$ introduced in Section \ref{SecDetails} for the type transition matrix and for the position transition probabilities of the chain $(X,J)$. We recall also that a \emph{$\mathsf Q$--matrix} on $\{1,\ldots,\kappa\}$ is a $\kappa\times\kappa$ matrix $Q$ such that the diagonal coefficients are nonpositive, the coefficients outside the diagonal are nonnegative and the sum of each line is $0$. These matrices serve as generators for continuous time Markov chains on $\{1,\ldots,\kappa\}$. A $\mathsf Q$--matrix is said to be \emph{irreducible} if the associated Markov chain is irreducible.

\medskip

\begin{mybox}
\noindent \textbf{Hypothesis $(\mathsf H_{\mathrm{mix}})$.} Assume that there exists $0\leq\beta<\gamma$ such that:
\begin{enumerate}
\item[(i)] There exist finite measures $(\mu^{(i)},i \in \{1,\ldots,\kappa\})$ on $[0,1],$ at least one of which is nontrivial, such that, for all continuous functions $f:[0,1]\rightarrow \mathbb R$,
\[
n^{\gamma} \sum_{m=0}^n f\left(\frac{m}{n}\right) \left(1-\frac{m}{n}\right)p_{n}^{(i)}(m) \underset{n\rightarrow \infty}\longrightarrow \int_{[0,1]} f(x)\mu^{(i)}(\mathrm dx). 
\]
\item[(ii)] Moreover, there exists an irreducible $\mathsf Q$--matrix $Q=(q_{i,j})_{i,j\in\{1,\ldots,\kappa\}}$ such that
\[
n^{\beta}(P_n-I) \underset{n\rightarrow \infty}\longrightarrow Q.
\]
\end{enumerate}
\end{mybox}

\noindent In this regime, we will observe that the types asymptotically ``mix". Precisely, at the $n^{\gamma}$ scale, the type of the chain changes instantly since $\beta<\gamma$, and the chain will act in the limit as if its type was a weighted combination of all types, given by the invariant measure of the matrix $Q$. We let $\pi=(\pi_i,i\in\{1,\ldots,\kappa\})$ be the unique invariant probability measure for $Q$, which exists by irreducibility. We let also for $i\in\{1,\ldots,\kappa\}$, $\psi_i$ be the Laplace exponent corresponding to the measure $\mu^{(i)}$, that is
\[
\psi_i(\lambda)=\mu^{(i)}(\{0\})+\lambda\mu^{(i)}(\{1\})+\int_{(0,1)} (1-x^{\lambda})\frac{\mu^{(i)}(\mathrm d x)}{1-x},
\]
and $\psi$ the mixed Laplace exponent:
\begin{equation}\label{mix:defpsi}
\psi(\lambda)=\sum_{i=1}^{\kappa} \pi_i\psi_i(\lambda).
\end{equation}
We define

\[
Y_n^{(i)}(t):=\frac{X_n^{(i)}(\lfloor n^{\gamma} t\rfloor)}{n}, \quad \quad 
Z_n^{(i)}(t):=Y_n^{(i)}(\tau_n^{(i)}(t)), \quad \quad \text{and} \quad K_n^{(i)}(t):=J_n^{(i)}(\lfloor n^{\gamma}\tau_n^{(i)}(t)\rfloor) \quad t \geq 0,
\]
where 
$\tau_n^{(i)}(t)=\inf\left\{u:\int_0^u (Y_n^{(i)}(r))^{-\gamma} \mathrm dr>t\right\}.$

\medskip

\begin{thm} 
\label{ThMixing}
Under assumption $(\mathsf H_{\mathrm{mix}})$,
\[
\left(\Bigg(\frac{X^{(i)}_n(\lfloor n^{\gamma}t \rfloor)}{n}, Z_n^{(i)}(t) \Bigg), t\geq 0\right) \ \overset{\mathrm{(d)}}{\underset{n \rightarrow \infty} \longrightarrow} \ \left(X,Z\right),
\]
where $\left(-\log(Z)\right)$ is a subordinator with Laplace exponent $\psi=\sum_{j=1}^{\kappa}\pi_j\psi_j$, $X=Z(\rho(\cdot))$ and
\[
\rho(t)=\inf \left\{u:\int_0^u \left(Z(r)\right)^{\gamma} \mathrm dr>t\right\}.
\]
The topology is the product topology on $\mathcal D\left([0,\infty),[0,\infty)\right)^2$.
\end{thm}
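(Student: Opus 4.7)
The plan is to reduce the mixing regime to the monotype setting of \cite{HM11} by averaging the position transitions over the stationary distribution $\pi$ of the fast type chain. Since type changes happen on a time scale $O(n^\beta)$ while macroscopic jumps of $X$ happen on scale $O(n^\gamma)$ with $\beta<\gamma$, the type marginal equilibrates to $\pi$ in between successive macroscopic jumps, and only the averaged jump kernel should survive in the scaling limit.

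Concretely, I would define the averaged transition probabilities $\bar p_n(m) := \sum_{i=1}^{\kappa} \pi_i\, p_n^{(i)}(m)$. By linearity, $(\mathsf H_{\mathrm{mix}})\mathrm{(i)}$ gives
\[
n^\gamma \sum_{m=0}^n f(m/n)(1-m/n)\, \bar p_n(m) \underset{n\rightarrow\infty}{\longrightarrow} \int_{[0,1]} f(x)\, \bar\mu(\mathrm dx), \qquad \bar\mu := \sum_i \pi_i \mu^{(i)},
\]
and one checks immediately that the associated Laplace exponent is precisely $\psi$ from (\ref{mix:defpsi}). Applying \cite[Theorems 1, 2]{HM11} to a monotype chain $\bar X_n$ with transitions $\bar p_n$ then yields that $\bar X_n(\lfloor n^\gamma \cdot \rfloor)/n$ converges in distribution, for the Skorokhod topology, to the $\gamma$--self--similar Markov process $X$ of the statement.

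The core step is to show that $n^{-1}X_n^{(i)}(\lfloor n^\gamma \cdot \rfloor)$ and $n^{-1}\bar X_n(\lfloor n^\gamma \cdot \rfloor)$ have the same scaling limit. I would introduce an intermediate scale $k_n$ with $n^\beta \ll k_n \ll n^\gamma$, say $k_n := \lfloor n^{(\beta+\gamma)/2}\rfloor$, and split the horizon into blocks of $k_n$ steps. On each block the position stays, with probability $1-o(1)$, within a window of size $o_{\mathrm P}(n)$ around some macroscopic value $m$, while the type chain is driven by an essentially position--independent generator $n^{-\beta} Q$; by irreducibility of $Q$ and its spectral gap, this chain mixes in $O(n^\beta)=o(k_n)$ steps, so its empirical occupation of each type $j$ over the block is $\pi_j + o_{\mathrm P}(1)$. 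Consequently, the conditional law of the position increment over the block coincides, up to a total--variation error tending to zero, with the law of a length--$k_n$ trajectory of the averaged chain $\bar X_m$. A union bound over the $O(n^\gamma/k_n)$ blocks --- together with the fact that the probability of having two macroscopic jumps in a single block is $O((k_n n^{-\gamma})^2)$, which is summable in the rescaling --- couples $X_n^{(i)}$ with $\bar X_n$ in such a way that $n^{-1}\sup_{t\leq T}\bigl|X_n^{(i)}(\lfloor n^\gamma t\rfloor) - \bar X_n(\lfloor n^\gamma t\rfloor)\bigr| \to 0$ in probability for every $T>0$, transferring the scaling limit. The joint convergence with $Z_n^{(i)}$ then follows immediately from Lemma \ref{lampsko} applied with $\alpha = -\gamma < 0$, since $X_n^{(i)}$ is non--increasing.

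The main obstacle is the averaging step: it requires a uniform--in--position mixing estimate for the type chain, which should follow from the convergence $n^\beta(P_n - I) \to Q$ combined with the spectral gap of the irreducible limit $Q$, after a standard perturbation argument. A secondary difficulty concerns the boundary regime where $X_n^{(i)}/n$ becomes small and the separation of time scales degrades, but this is handled exactly as in the monotype setting of \cite{HM11}: one exploits the continuity of $X$ at its absorption time when $\bar\mu(\{0\})=0$, and otherwise absorbs the killing contribution $\bar\mu(\{0\})$ directly into the definition of $\psi$, so that small values of the position contribute negligibly to the scaling limit.
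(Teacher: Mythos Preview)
Your approach is genuinely different from the paper's, and the core idea---averaging over $\pi$ and reducing to \cite{HM11}---is natural. But the proposal as written has a real gap at the coupling step.

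The paper does not attempt a trajectory-level coupling between $(X_n^{(i)},J_n^{(i)})$ and an averaged monotype chain. Instead it (a) proves tightness of $Y_n^{(i)}=n^{-1}X_n^{(i)}(\lfloor n^\gamma\cdot\rfloor)$ directly via Aldous' criterion and a one-step martingale, (b) passes to continuous time, extracts a subsequential limit $(Y',Z')$, and (c) identifies the law of $Z'$ by showing that $\big((Z'(t))^\lambda e^{t\psi(\lambda)},t\ge0\big)$ is a martingale. Step (c) is obtained by proving that an explicit discrete martingale $\mathcal M_n^{(\lambda)}$ converges to this process; the only place the mixing enters is through an \emph{occupation-measure} statement (Proposition~\ref{prop:melange}): $\mathbbm 1_{\{s\le T_\veps(Z_n^c)\}}\mathbbm 1_{\{K_n^c(s)=j\}}\mathrm ds \to \pi_j\,\mathbbm 1_{\{s\le T_\veps(Z')\}}\mathrm ds$ in probability. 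That proposition is itself proved by coupling, but only the type process $K_n^{(\beta)}$ with a continuous-time chain of generator $Q$, and only at the level of occupation measures---never at the level of position trajectories.

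The gap in your argument is the sentence ``A union bound over the $O(n^{\gamma}/k_n)$ blocks \ldots\ couples $X_n^{(i)}$ with $\bar X_n$ so that $n^{-1}\sup_{t\le T}|X_n^{(i)}-\bar X_n|\to 0$.'' Three issues: (i) the number of blocks $n^{(\gamma-\beta)/2}\to\infty$, so a per-block total-variation error that merely tends to zero does not survive the union bound; you would need a quantitative rate and have not indicated where it comes from. (ii) The blocks are not independent: the terminal position and type of one block are the initial data of the next, so coupling errors compound rather than add. (iii) Within a block the types form a Markov chain, not an i.i.d.\ $\pi$-sequence; even after the $O(n^\beta)$ burn-in, the law of the full type trajectory over the block is not close in total variation to the i.i.d.\ law, so ``the conditional law of the position increment over the block coincides with that of $\bar X_m$ up to TV error $o(1)$'' is not established. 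These difficulties are exactly why the paper works with occupation measures fed into a martingale identity rather than with pathwise couplings of the position.
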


\medskip

Recall that $A_n^{(i)}$ is the absorption time of $X_n^{(i)}$ and let  $I$ be the absorption time of $X$ (which has positive moments of all orders since the Laplace exponent $\psi$ is not trivial). 

\begin{thm}\label{ThMixingjoint}
Assume, in addition to $(\mathsf H_{\mathrm{mix}})$, that the measures $(\mu^{(i)},i \in \{1,\ldots,\kappa\})$ are all nontrivial. In this case, jointly with the previous convergence, we have
\[ \frac{A_n^{(i)}}{n^{\gamma}}\overset{\mathrm{(d)}}{\underset{n \rightarrow \infty} \longrightarrow}I,\]
and for all $a \geq 0$,
\[\mathbb E\left[\left(\frac{A_n^{(i)}}{n^{\gamma}}\right)^a \right] {\underset{n \rightarrow \infty} \longrightarrow} \mathbb E \left[ I^a\right].\]
\end{thm}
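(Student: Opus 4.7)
The strategy mirrors the proof of Theorem \ref{ThCriticalJoint}, with hitting times of decreasing levels playing the role played there by successive type--change times (type changes occur too rapidly in the mixing regime to be useful for truncation). The first and most delicate ingredient is the analogue of Lemma \ref{lem:tightness}: a uniform moment bound
\[
\sup_{n}\mathbb E\!\left[\left(\frac{A_n^{(i)}}{n^{\gamma}}\right)^{\!a}\right]<\infty,\qquad \forall a\geq 0,\ \forall i\in\{1,\ldots,\kappa\}.
\]
Unlike the critical regime, the nontriviality of each $\mu^{(i)}$ does not automatically provide a uniform big--jump rate of order $n^{-\gamma}$ (here $\mu^{(i)}$ is allowed to be a pure Dirac mass at $1$, a ``pure drift'' type), so the coupling of Lemma \ref{lem:tightness} cannot be applied verbatim. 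Instead, apply $(\mathsf H_{\mathrm{mix}})$(i) to the test function $f(x)=(1-x^{\gamma})/(1-x)$, continuous on $[0,1]$ with $f(1)=\gamma$, to show that the Lyapunov function $V(x)=x^{\gamma}$ satisfies a uniform drift condition
\[
\mathbb E\!\left[V(X^{(i)}_n(k))-V(X^{(i)}_n(k+1))\,\big|\,X^{(i)}_n(k)=n,\,J^{(i)}_n(k)=j\right]\geq c>0
\]
for every type $j$ and all $n$ large enough, with $c=\tfrac12\min_{\ell}\int f\,d\mu^{(\ell)}>0$. Optional stopping delivers $\mathbb E[A_n^{(i)}]\leq Cn^{\gamma}$; moment bounds of higher order are then obtained by combining this first--moment bound with the strong Markov property applied at the successive hitting times of the levels $n/2^{p}$.

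Granted this moment bound, Theorem \ref{ThMixing} yields tightness of the triple
\[
\Bigl(X_n^{(i)}(\lfloor n^{\gamma}\cdot\rfloor)/n,\ Z_n^{(i)},\ A_n^{(i)}/n^{\gamma}\Bigr)
\]
in $\mathcal D([0,\infty),[0,\infty))^{2}\times[0,\infty)$. Extract a convergent subsequence with limit $(X,Z,\sigma)$ and, via Skorokhod's representation theorem, assume a.s.\ convergence. Since $Y_n^{(i)}(t):=X_n^{(i)}(\lfloor n^{\gamma}t\rfloor)/n$ is the $\gamma$--Lamperti transform of $Z_n^{(i)}$, we have the identity
\[
\frac{A_n^{(i)}}{n^{\gamma}}=\int_0^{\infty}Z_n^{(i)}(r)^{\gamma}\,\mathrm dr.
\]
Skorokhod convergence of $Z_n^{(i)}$ to $Z$ entails pointwise convergence at every continuity point of $Z$, and Fatou's lemma then gives the lower bound $\sigma\geq\int_0^{\infty}Z(r)^{\gamma}\,\mathrm dr=I$ almost surely.

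For the matching upper bound on expectations, truncate at level $\delta$: for $\delta>0$ outside the countable set of jump levels of $X$, set $\tau_\delta^n:=\inf\{k:X_n^{(i)}(k)\leq\delta n\}$. The strong Markov property at $\tau_\delta^n$, together with the moment bound of Step 1, yields
\[
\mathbb E\!\left[A_n^{(i)}-\tau_\delta^n\right]\leq\sup_{m\leq\delta n,\,j}\mathbb E\!\left[A_m^{(j)}\right]\leq C(\delta n)^{\gamma},
\]
while Theorem \ref{ThMixing}, continuity of the hitting--time functional at our chosen $\delta$, and the uniform integrability provided by Step 1 (since $\tau_\delta^n\leq A_n^{(i)}$) give $\mathbb E[n^{-\gamma}\tau_\delta^n]\to\mathbb E[\tau_\delta]$, where $\tau_\delta:=\inf\{t:X(t)\leq\delta\}$. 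Letting $\delta\downarrow 0$ along a dense sequence, $\tau_\delta\uparrow I$ and monotone convergence yields $\limsup_n\mathbb E[n^{-\gamma}A_n^{(i)}]\leq\mathbb E[I]$. Combined with $\mathbb E[\sigma]\geq\mathbb E[I]$ from the lower bound, this forces $\sigma=I$ a.s. Uniqueness of the subsequential limit delivers convergence in distribution of the full sequence, and the uniform integrability of $\{(A_n^{(i)}/n^{\gamma})^{a}\}$ from Step 1 upgrades this to convergence of all moments.

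The principal obstacle is Step 1: in the critical setting the assumption $\mu^{(i,j)}((0,1))>0$ furnishes a macroscopic jump rate of exact order $n^{-\gamma}$ and the monotype results of \cite{HM11} can be invoked through a direct stochastic coupling; here a pure--drift type contributes only infinitesimal decrements per step, so the argument must pass through the Lyapunov estimate on $V(x)=x^{\gamma}$ and an iteration at geometrically decreasing levels in order to reach moment bounds of every order.
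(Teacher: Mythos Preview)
Your overall architecture is sound and the argument goes through, but it is considerably more roundabout than the paper's proof, and one step of your sketch is incomplete.

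\textbf{Where the paper is shorter.} Both approaches rest on the same elementary observation: under the extra hypothesis that every $\mu^{(i)}$ is nontrivial, one has $1-G_n^{(j)}(\lambda)\geq c'(\lambda)n^{-\gamma}$ for every type $j$ and every $n$ (this is exactly your drift inequality for $V(x)=x^{\gamma}$, read at $\lambda=\gamma$). The paper feeds this lower bound into the martingale $\mathcal{M}_n^{(\lambda)}$ already built in Section~\ref{secPrelim} and obtains in one line the exponential estimate $\mathbb E\big[(Z_n^{c}(t))^{\lambda}\big]\leq e^{-c'(\lambda)t}$ (Lemma~\ref{lem:expo}). Since $A_n^{c}/n^{\gamma}=\int_0^{\infty}(Z_n^{c}(r))^{\gamma}\,\mathrm dr$, this single estimate yields \emph{all} moment bounds at once (via H\"older, or by expanding the $a$-th power of the integral and using monotonicity of $Z_n^{c}$), and also gives the matching upper bound $\limsup_n\mathbb E[A_n^{c}/n^{\gamma}]\leq\mathbb E[I]$ by dominated convergence on $\int_0^{\infty}\mathbb E[(Z_n^{c}(r))^{\gamma}]\,\mathrm dr$, with dominator $e^{-c'(\gamma)r}$. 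No level--truncation or Markov--property iteration is needed.

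\textbf{What is incomplete in your sketch.} Your Step~1 delivers the first moment cleanly, but the phrase ``moment bounds of higher order are then obtained by combining this first--moment bound with the strong Markov property applied at the successive hitting times of the levels $n/2^{p}$'' does not close as written. Decomposing $A_n=\sum_p(\sigma_{p+1}-\sigma_p)$ and conditioning gives $\mathbb E[\sigma_{p+1}-\sigma_p\mid\mathcal F_{\sigma_p}]\leq C(n2^{-p})^{\gamma}$, but to control $\mathbb E[A_n^{a}]$ you need conditional $a$-th moments of each increment, and the first--moment bound alone does not supply them (the diagonal terms in the multinomial expansion are the obstruction). One can rescue this by an induction on $a$ that exploits the contraction factor $2^{-\gamma a}$ coming from the self--similar scaling, but this requires more care than your one sentence suggests. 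The quickest fix is simply to note that your drift inequality is equivalent to the paper's lower bound on $1-G_n^{(j)}(\lambda)$ and then follow Lemma~\ref{lem:expo}; once you have the exponential bound on $Z_n^{c}$, both the uniform moment bounds and the $\limsup$ upper bound fall out without any level--hitting argument.

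Your Step~3 (level truncation at $\delta$) is correct and parallels the proof in the critical regime, but becomes superfluous once Lemma~\ref{lem:expo} is available.
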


\medskip

\noindent \textbf{Remark: possible extensions. 1.} As for Theorem \ref{ThCriticalJoint}, we believe that the convergences stated in \ref{ThMixingjoint} are still true without the additional assumption that the $(\mu^{(i)},i \in \{1,\ldots,\kappa\})$ are all nontrivial. However, this assumption leads to a fairly simple proof and therefore we will keep it in the following.

\noindent \textbf{2.} It is probably possible to write versions of Theorems \ref{ThMixing} and \ref{ThMixingjoint} for matrices $Q$ which are not irreducible,  and in fact one could also have intermediate results between the mixing and critical regimes, where we have several groups of types, inside of which the rate of type change is of order $n^{-\beta}$, but the rate of changing group is of order $n^{-\gamma}$. We will not consider such generalizations, the current subject matter already being quite complex.

\bigskip

The proofs of Theorem \ref{ThMixing} and Theorem \ref{ThMixingjoint} are partly inspired by the ones of \cite{HM11} in the monotype setting. The differences come from the multiplicity of types and their mixing, which significantly complicates the proofs.
We start below by implementing a series of preliminaries in Section \ref{secPrelim}, and then turn to the proofs of Theorem \ref{ThMixing} and Theorem \ref{ThMixingjoint} in Section \ref{secTh1} and Section \ref{secTh2} respectively. The proof of a key point on the mixing of types, Proposition \ref{prop:melange}, stated in Section  \ref{sec:presmixing}, is postponed to Section \ref{sec:PropMelange}.

\subsection{Preliminaries}
\label{secPrelim}

We set up in this section key steps to prove the statements of Theorem \ref{ThMixing} and Theorem \ref{ThMixingjoint}. In all the statements below, it is implicit that we work under $(\mathsf H_{\mathrm{mix}})$ (although this hypothesis is not necessary at every step).

\subsubsection{Generating functions and bounds}
We list here a few simple results on some generating functions which we will need later on. For $n\in\N$ and $i\in\{1,\ldots,\kappa\},$ we let $G_n^{(i)}$ be the function defined by for $\lambda>0$ by 
\begin{equation}\label{mix:defG}
G_n^{(i)}(\lambda)=\mathbb{E}\left[\left(\frac{X_n^{(i)}(1)}{n}\right)^{\lambda}\right].
\end{equation}
By assumption, we know that, for all $\lambda>0$,
\begin{equation}\label{mix:convG}
n^{\gamma}\big(1-G_n^{(i)}(\lambda)\big) \to \psi_i(\lambda).
\end{equation}
We then have the existence of a finite constant $c(\lambda)$ such that, for all $n\in\N$ and $i\in\{1,\ldots,\kappa\}$,
\begin{equation}\label{mix:c1}
1-G_n^{(i)}(\lambda)\leq n^{-\gamma}c(\lambda).
\end{equation}

\subsubsection{Tightness and different time scales}
\label{sec:tightness}

\begin{prop}
\label{prop:mix:tight}
The sequence of processes $(Y_n^{(i)},n\in\N)$ is tight in  $\mathcal D\left([0,\infty),[0,\infty)\right).$
\end{prop}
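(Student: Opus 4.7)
The plan is to verify the Aldous tightness criterion for the sequence of non-increasing càdlàg processes $(Y_n^{(i)})_{n\in\N}$. Since $Y_n^{(i)}$ takes values in $[0,1]$ with $Y_n^{(i)}(0)=1$, the marginals are automatically tight. What remains is to show that for every $T,\eta,\varepsilon>0$, there exists $\delta>0$ such that for every $n$ large enough and every $(\mathcal F_k)$-stopping time $\sigma_n$ bounded by $n^\gamma T$,
\[
\mathbb P\left( Y_n^{(i)}(\sigma_n/n^\gamma) - Y_n^{(i)}(\sigma_n/n^\gamma + \delta) > \eta \right) \leq \varepsilon.
\]
The monotonicity of $Y_n^{(i)}$ plus this estimate is enough, via a standard Aldous-type argument, to yield tightness in $\mathcal D([0,\infty),[0,\infty))$ for the J1 topology.

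The key ingredient is the uniform moment bound \eqref{mix:c1}. By the Markov property, conditionally on $\mathcal F_k$ with $X_n^{(i)}(k)=m$ and $J_n^{(i)}(k)=j$,
\[
\mathbb E\left[ (X_n^{(i)}(k+1)/m)^\lambda \mid \mathcal F_k \right] = G_m^{(j)}(\lambda) \geq 1 - m^{-\gamma} c(\lambda).
\]
Iterating this bound on $p$ successive steps, and using that $X_n^{(i)}$ is non-increasing so each intermediate value is at most $m$, I would obtain
\[
\mathbb E\left[ (X_n^{(i)}(k+p)/m)^\lambda \mid \mathcal F_k,\; X_n^{(i)}(k)=m \right] \geq (1 - m^{-\gamma} c(\lambda))^p \geq 1 - p m^{-\gamma} c(\lambda).
\]
Applying this at $k=\sigma_n$ with $p=\lfloor n^\gamma \delta \rfloor$ on the event $\{Y_n^{(i)}(\sigma_n/n^\gamma) \geq \varepsilon'\}$ for some fixed threshold $\varepsilon'>0$, we get
\[
\mathbb E\left[ \left( Y_n^{(i)}(\sigma_n/n^\gamma + \delta) / Y_n^{(i)}(\sigma_n/n^\gamma) \right)^\lambda \mid \mathcal F \right] \geq 1 - c(\lambda)\, \delta\, (\varepsilon')^{-\gamma}.
\]
A Markov-type inequality (splitting on whether the ratio drops below $1-\eta/\varepsilon'$) then bounds the probability of a drop larger than $\eta$ by a constant multiple of $\delta(\varepsilon')^{-\gamma}/(1-(1-\eta/\varepsilon')^\lambda)$, which can be made smaller than $\varepsilon/2$ by taking $\delta$ small.

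It remains to control the event $\{Y_n^{(i)}(\sigma_n/n^\gamma) < \varepsilon'\}$, which is where the process is close to being absorbed. The plan is to handle this separately by a coupling with a monotype comparison chain, in the spirit of Lemma 4.8: one constructs a non-increasing $\mathbb Z_+$-valued Markov chain $(Y_n)$ whose one-step transition probabilities dominate those of $X_n^{(i)}$ (regardless of type), for which \cite{HM11} directly provides tightness of the rescaled absorption time $A_{Y,n}/n^\gamma$, and thus tightness of the whole rescaled process. Then for $\varepsilon'$ small enough, the probability that $Y_n^{(i)}$ reaches below $\varepsilon'$ before time $T$ and then drops by $\eta$ within $\delta$ of time is negligible, by tightness and the fact that the event is included in a small-value event for the dominating chain. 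The main obstacle is to define this coupling cleanly in the multi-type framework; the irreducibility of $Q$ together with assumption $(\mathsf H_{\mathrm{mix}})$(i) (producing a uniform lower bound on the macroscopic jump probability from some positive mass of $\mu^{(i)}$, for every type) should make this possible analogously to the construction in Lemma 4.8.
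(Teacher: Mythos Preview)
Your overall strategy---Aldous' criterion plus a bound on the expected $\lambda$-th power decrement over a short time window---is the same as the paper's. However, there are two issues in the execution.

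First, the multiplicative iteration $\mathbb E[(X_n^{(i)}(k+p)/m)^\lambda \mid X_n^{(i)}(k)=m] \geq (1 - m^{-\gamma} c(\lambda))^p$ is not justified by ``intermediate values are at most $m$'': since $m' \leq m$ gives $(m')^{-\gamma} \geq m^{-\gamma}$, the one-step lower bound $1-(m')^{-\gamma}c(\lambda)$ can be \emph{smaller} than $1-m^{-\gamma}c(\lambda)$, so the product does not telescope as you claim. The correct route is additive: for $\lambda \geq \gamma$ (a restriction you do not mention but which is essential) one has $(X_n^{(i)}(\ell))^\lambda\bigl(1-G_{X_n^{(i)}(\ell)}^{(j)}(\lambda)\bigr) \leq c(\lambda)(X_n^{(i)}(\ell))^{\lambda-\gamma} \leq c(\lambda)\, n^{\lambda-\gamma}$, and summing over the $\lfloor n^\gamma\theta\rfloor$ steps gives $\mathbb E\bigl[(Y_n^{(i)}(T))^\lambda-(Y_n^{(i)}(T+\theta))^\lambda\bigr] \leq c(\lambda)(\theta+n^{-\gamma})$ directly. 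This is exactly what the paper does, packaged as optional stopping for the compensated martingale $M_n(k)=(X_n^{(i)}(k)/n)^\lambda + \sum_{l<k}(X_n^{(i)}(l)/n)^\lambda\bigl(1-G_{X_n^{(i)}(l)}^{(J_n^{(i)}(l))}(\lambda)\bigr)$, and it requires \emph{no} lower bound on $Y_n^{(i)}(T)$: the point is to bound $(X_n^{(i)}(\ell))^{\lambda-\gamma}$ by $n^{\lambda-\gamma}$, not by $(X_n^{(i)}(\sigma_n))^{\lambda-\gamma}$.

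Second, your treatment of the event $\{Y_n^{(i)}(\sigma_n/n^\gamma)<\varepsilon'\}$ is both unnecessary and not available under the stated hypotheses. Unnecessary, because on this event the total drop of the non-increasing, non-negative process is at most $\varepsilon'$, so choosing $\varepsilon'<\eta$ makes the probability of a drop exceeding $\eta$ equal to zero---no coupling is needed. Not available, because the comparison chain you sketch requires a uniform lower bound on the macroscopic jump rate for \emph{every} type, whereas $(\mathsf H_{\mathrm{mix}})$(i) only assumes that \emph{at least one} $\mu^{(i)}$ is nontrivial; the stronger assumption that all $\mu^{(i)}$ are nontrivial is added only for Theorem~\ref{ThMixingjoint}, not for the present proposition.
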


\begin{proof}
Our proof is essentially the same as that of Lemma 1 in \cite{HM11}, adapted to the multi--type case. We use Aldous' tightness criterion for the Skorokhod topology. Namely, since $Y_n^{(i)}$ is bounded, we need to prove

\begin{equation}\label{Aldous}
\underset{\theta_0\rightarrow 0}\lim\underset{n\to\infty}\limsup \underset{T\in \mathcal{J}(\G_n),T\leq t}\sup \;\underset{0\leq\theta\leq\theta_0}{\sup} \pr\big[|Y_n^{(i)}(T)-Y_n^{(i)}(T+\theta)|>\veps\big]=0 
\end{equation}
for all $t>0$ and $\veps>0$, where $\mathcal{J}(\G_n)$ is the set of stopping times for the filtration $\G_n$ which is the natural filtration of the process $(Y_n^{(i)})$.

To do this, we make use of the martingale $M_n=(M_n(k),k\geq0)$ defined for $k\geq0$ by
\[M_n(k)=\left(\frac{X_n^{(i)}(k)}{n}\right)^{\lambda} + \sum_{l=0}^{k-1} \left(\frac{X_n^{(i)}(l)}{n}\right)^{\lambda}\left(1-G^{(J_n^{(i)}(l))}_{X_n^{(i)}(l)}(\lambda)\right),\]
where $\lambda$ is any real number greater than $1\vee \gamma$. While $M_n$ is not a martingale for the filtration $\mathcal{G}_n$, it is a martingale for the larger filtration $\F_n$, the natural filtration of the process $(Y_n^{(i)},J_n^{(i)}).$ 
Given that $Y_n^{(i)}$ is non--increasing and $\lambda\geq 1$, we have $|Y_n^{(i)}(T)-Y_n^{(i)}(T+\theta)|^{\lambda}\leq (Y_n^{(i)}(T))^{\lambda}-(Y_n^{(i)}(T+\theta))^{\lambda}$ for $\theta\geq 0$ and $T$ a bounded stopping time. Using the optional stopping theorem and the fact that $\lambda\geq\gamma$, we obtain
\begin{align*}
\E\big[\big(Y_n^{(i)}(T)\big)^{\lambda}-\big(Y_n^{(i)}(T+\theta)\big)^{\lambda}\big]&=n^{-\lambda}\E\left[\sum_{l=\lfloor n^{\gamma} T\rfloor}^{\lfloor n^{\gamma}(T+\theta)\rfloor -1} \big(X_n^{(i)}(l)\big)^{\lambda}\big(1-G_{X_n^{(i)}(l)}^{(J_n^{(i)}(l))}(\lambda)\big)\right] \\
        &\leq c(\lambda)n^{-\lambda}\E\left[\sum_{l=\lfloor n^{\gamma} T\rfloor}^{\lfloor n^{\gamma}(T+\theta)\rfloor -1} \big(X_n^{(i)}(l)\big)^{\lambda-\gamma}\right] \\ 
        &\leq c(\lambda)n^{-\lambda}\E\left[\sum_{l=\lfloor n^{\gamma} T\rfloor}^{\lfloor n^{\gamma}(T+\theta)\rfloor -1} n^{\lambda-\gamma}\right] \\
        &\leq c(\lambda)(\theta+n^{-\gamma}),        
\end{align*}
and (\ref{Aldous}) is then a consequence of the Markov inequality.
\end{proof}

One may expect from this tightness  that the natural scale of time to see macroscopic changes in $X_n^{(i)}$ is the scale $n^{\gamma}$. The following lemma reinforces this, and shows that the scale of time to see a change of $J_n^{(i)}$ is much smaller.

\begin{lem}
\label{lemma:mix:scale}
\begin{itemize}
\item[\emph{(i)}] Let $(T_n,n\in\N)$ be any sequence of random times such that $n^{-\gamma}T_n$ converges in probability to $0$. Then $n^{-1}X_n^{(i)}(T_n)$ converges in probability to $1$.
\item[\emph{(ii)}] For $n\in\N$ and $\veps>0$, let
\[T_{n,\veps}=\inf \{k\geq 0: \, X_n^{(i)}(k)\leq n\veps\}.\]
Then, for any $\alpha<\gamma$, $n^{-\alpha}T_{n,\veps}$ tends to infinity in probability, in the sense that, for all $u>0$, $\pr(T_{n,\veps}> un^{\alpha})$ converges to $1$.
\item[\emph{(iii)}] For $n\in\N$, let $T_n=\inf\{k\in\N, J_n^{(i)}(k)\neq i\}$. Then $n^{-\gamma}T_n$ converges in probability to $0$.
\end{itemize}
\end{lem}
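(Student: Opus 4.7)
The plan is to derive all three items from the martingale $M_n$ introduced in the proof of Proposition \ref{prop:mix:tight}. The key estimate I would first obtain, by applying optional stopping to $M_n$ at the deterministic time $\lfloor \delta n^{\gamma}\rfloor$ for an arbitrary $\delta>0$ and any $\lambda > 1\vee\gamma$ and using the uniform bound (\ref{mix:c1}) exactly as in the tightness argument, is
\[
1 - \E\bigl[(Y_n^{(i)}(\lfloor \delta n^{\gamma}\rfloor))^{\lambda}\bigr] \leq c(\lambda)\delta.
\]
Markov's inequality then gives $\pr(Y_n^{(i)}(\lfloor \delta n^{\gamma}\rfloor) < 1 - \eta) \leq c(\lambda)\delta/(1 - (1-\eta)^{\lambda})$ for each $\eta > 0$. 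For (i), I would exploit the monotonicity of $Y_n^{(i)}$: since $Y_n^{(i)}(T_n) \geq Y_n^{(i)}(\lfloor \delta n^{\gamma}\rfloor)$ on the event $\{T_n \leq \delta n^{\gamma}\}$ and $\pr(T_n > \delta n^{\gamma}) \to 0$ by hypothesis, letting first $n\to\infty$ and then $\delta\to 0$ yields $Y_n^{(i)}(T_n) \to 1$ in probability. Item (ii) is the specialization of (i) to the deterministic time $T_n = \lfloor un^{\alpha}\rfloor$, legitimate because $\alpha < \gamma$: the inclusion $\{Y_n^{(i)}(\lfloor un^{\alpha}\rfloor) > \veps\} = \{T_{n,\veps} > \lfloor un^{\alpha}\rfloor\} \subseteq \{T_{n,\veps} > un^{\alpha}\}$ then forces the latter probability to tend to $1$.

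The substance of the lemma lies in (iii), which genuinely uses $\beta < \gamma$. I would fix $\delta > 0$, pick an intermediate exponent $\alpha \in (\beta, \gamma)$, and introduce $\sigma_n := \inf\{k : X_n^{(i)}(k) < n/2\}$; by (ii) already proved, $\pr(\sigma_n \leq n^{\alpha}) \to 0$. Irreducibility of $Q$ forces $q_{i,i} < 0$, and the convergence $n^{\beta}(P_n - I) \to Q$ combined with the non--increasing nature of $X_n^{(i)}$ (so that $m \leq n$ implies $1/m^{\beta} \geq 1/n^{\beta}$) would yield constants $c > 0$ and $n_0$ such that $P_m(i,i) \leq 1 - c/n^{\beta}$ for all $n \geq n_0$ and all $m \in [n/2, n]$. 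Introducing
\[
A_k := \{J_n^{(i)}(l) = i \text{ and } X_n^{(i)}(l) \geq n/2 \text{ for all } l \leq k\},
\]
conditioning step by step on $\F_{k-1}$ and using the above bound gives $\pr(A_k) \leq (1 - c/n^{\beta})\pr(A_{k-1})$, whence $\pr(A_{\lfloor n^{\alpha}\rfloor}) \leq \exp(-c n^{\alpha-\beta})$. Combining with the control of $\sigma_n$,
\[
\pr(T_n > \delta n^{\gamma}) \leq \pr(T_n > n^{\alpha}) \leq \pr(A_{\lfloor n^{\alpha}\rfloor}) + \pr(\sigma_n \leq n^{\alpha}) \longrightarrow 0,
\]
as required.

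The main obstacle I anticipate is purely technical: upgrading the pointwise limit $m^{\beta}(1 - P_m(i,i)) \to |q_{i,i}|$ to a lower bound on $1 - P_m(i,i)$ that is uniform over the window $m \in [n/2, n]$ needed to iterate in (iii). The monotonicity of $X_n^{(i)}$ makes this uniformity essentially free, and it is exactly the non--increasing structure of the chain that allows the two scales $n^{\beta}$ and $n^{\gamma}$ to be cleanly separated here; parts (i) and (ii) are straightforward consequences of the martingale estimate once this is in hand.
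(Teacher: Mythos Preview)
Your proof is correct and for parts (ii) and (iii) is essentially identical to the paper's argument: the same choice of an intermediate exponent $\alpha\in(\beta,\gamma)$, the same decomposition of $\{T_n>n^{\alpha}\}$ according to whether the position has dropped below a fixed fraction of $n$, and the same geometric iteration using the lower bound on $1-P_m(i,i)$ coming from $(\mathsf H_{\mathrm{mix}})$.

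For part (i) you take a slightly more direct route than the paper. The paper deduces (i) from the Skorokhod tightness of $(Y_n^{(i)})$ established in Proposition~\ref{prop:mix:tight}, invoking the characterisation of relatively compact sets in $\mathcal D([0,\infty),[0,\infty))$ to control $Y_n^{(i)}(\eta)-Y_n^{(i)}(0)$ uniformly in $n$ for small $\eta$. You instead go back to the martingale $M_n$ that underlies that tightness proof and extract the explicit bound $1-\E\bigl[(n^{-1}X_n^{(i)}(\lfloor\delta n^{\gamma}\rfloor))^{\lambda}\bigr]\le c(\lambda)\delta$ directly, then finish with Markov's inequality and monotonicity. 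This is the same computation at heart, but it avoids the detour through Skorokhod compactness and yields a clean quantitative estimate. The paper's version has the advantage that, once tightness is on record for other purposes, (i) becomes a one-line corollary; your version is self-contained and arguably more transparent.

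A minor notational slip: you write $Y_n^{(i)}(\lfloor\delta n^{\gamma}\rfloor)$ and $Y_n^{(i)}(\lfloor un^{\alpha}\rfloor)$, but by definition $Y_n^{(i)}(t)=n^{-1}X_n^{(i)}(\lfloor n^{\gamma}t\rfloor)$, so what you mean is $n^{-1}X_n^{(i)}(\lfloor\delta n^{\gamma}\rfloor)$, etc. This does not affect the argument.
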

\begin{proof} For point (i), let $\veps>0$, and take $\eta>0$ and $n\in\N$ such that $\pr(T_n>\eta n^{\gamma})\leq \veps$. We then have, for all $\rho>0$, 
\[\pr\Big(X_n^{(i)}(T_n)\leq (1-\rho)n \Big)\leq \veps + \pr\Big(X_n^{(i)}(\lfloor\eta n^{\gamma}\rfloor)\leq (1-\rho)n\Big).\]
By Proposition \ref{prop:mix:tight}\footnote{Specifically, there is a compact set $K$ which contains $Y^{(i)}_n, \forall n$ with probability greater than $1-\veps$, and using Theorem 12.3 from \cite{Bill99}, $f(\eta)-f(0)$ converges to $0$ as $\eta$ tends to $0$, uniformly in $f\in K$.},  if $\eta$ is small enough, then this will be smaller than $2\veps$ for $n$ large enough.

\medskip

\noindent Point (ii) is a consequence of point (i), since $\pr(T_{n,\veps} \leq un^{\alpha})=\pr(X_n(\lfloor un^{\alpha})\rfloor\leq n\veps)$ tends to $0$.

\medskip

\noindent For point (iii), let $\alpha\in (\beta,\gamma)$, and choose any $\veps \in (0,1)$. Write
\[\pr(T_n> n^{\alpha})=\pr(T_n> n^{\alpha},T_{n,\veps}> n^{\alpha})+\pr(T_n> n^{\alpha},T_{n,\veps}\leq n^{\alpha}).\]
Noticing that the second term tends to $0$ by (ii), showing that the first also does will be enough to prove (iii). By $(\mathsf H_{\mathrm{mix}})$, we know that there exists some constant $C>0$ such that, for all $k$, \[\pr\big(J_n^{(i)}(k+1)\neq i \mid X^{(i)}_n(k)> n\veps, J^{(i)}_n(k)=i\big)\geq Cn^{-\beta}.\] 
One then deduces by induction that 
\[\pr(T_n>k, T_{n,\veps}\geq k)\leq (1-Cn^{-\beta})^k,\] 
and thus 
\[\pr(T_n>n^{\alpha},T_{n,\veps}> n^{\alpha})\leq (1-Cn^{-\beta})^{n^\alpha}, \]
which tends to $0$ because $\alpha>\beta$. The proof is then ended since $\gamma>\alpha$. Note that this argument  in fact shows that  $n^{-\alpha}T_n$ tends in probability to $0$ for all $\alpha>\beta$, but we will not need this improvement.
\end{proof}

\textbf{From now on, to free up some notational space, we will also drop all references to the original type in the notation, and thus refer to the processes as $X_n,J_n$ and so on.}

\subsubsection{Moving to continuous time} 
\label{sec:continuoustime}

Inspired by \cite{BK14}, we introduce a transformation which embeds our processes in continuous time, making them easier to manipulate. We do this by considering a standard Poisson process $(\mathcal{N}(t),t\geq 0)$ which is independent of all the $X_n$ and $J_n$, and letting for all $n\in\N$ and $t\geq 0$, 
$$X^c_n(t)=X_n(\mathcal{N}(t)) \quad \text{ and } \quad J^c_n(t)=J_n(\mathcal{N}(t)).$$ 
The process $\big(\big(X^c_n(t),J^c_n(t)\big),t\geq0\big)$ is thus a Markov process with transition rates given by the $(p_{(n,i)}(m,j)$). The functional law of large numbers ensures us that limit results for $(X_n,J_n)$ are equivalent to the same for $(X^c_n,J^c_n)$. Specifically, we have the following:
\begin{lem}\label{lem:passagecontinu} 
\begin{itemize}
\item[\emph{(i)}] Let $f_n$ be the function which maps $t\geq 0$ to $n^{-\gamma}\mathcal{N}(n^{\gamma}t),$ and $g_n$ a generalised inverse defined this way:
\[g_n(t)=\inf\{s \geq 0:\; f_n(s)= n^{-\gamma}\lfloor n^{\gamma}t\rfloor\}.\]
Then both $f_n$ and $g_n$ converge a.s. uniformly on compact sets to the identity function.

\item[\emph{(ii)}]  For all integers $k$, there exists a constant $c_k$ such that $\mathbb E \left[(\mathcal N(t))^k\right] \leq c_k(t^k \vee t)$, for all $t \geq 0$.
\end{itemize}
\end{lem}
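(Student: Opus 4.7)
The plan is to treat the two parts separately, using standard properties of the Poisson process.

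For part (i), I will first handle $f_n$. For fixed $t>0$, the strong law of large numbers for the Poisson process gives $\mathcal{N}(s)/s \to 1$ almost surely as $s\to\infty$, so $f_n(t)=n^{-\gamma}\mathcal{N}(n^{\gamma}t) \to t$ almost surely. Since each $f_n$ is non-decreasing and the limit function $\mathrm{id}$ is continuous, I will upgrade the pointwise convergence (at, say, a countable dense set) to local uniform convergence by a Dini/Glivenko-Cantelli-type argument: for a compact interval $[0,T]$ and a partition $0=t_0<t_1<\cdots<t_N=T$ with mesh $\leq \delta$, monotonicity gives
\[\sup_{t\in[0,T]}|f_n(t)-t| \leq \max_{0\leq k\leq N-1}\bigl(|f_n(t_k)-t_k|\vee|f_n(t_{k+1})-t_{k+1}|\bigr)+\delta,\]
and the right-hand side is eventually bounded by $2\delta$ almost surely.

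For $g_n$, I will rewrite it as $g_n(t) = n^{-\gamma} S_{\lfloor n^{\gamma}t \rfloor}$ where $S_k$ denotes the $k$-th jump time of $\mathcal{N}$; this follows because $f_n$ only takes values $k/n^{\gamma}$, jumping at each $n^{-\gamma}S_k$. The SLLN for the Poisson process (equivalently, for sums of i.i.d.\ exponential interarrival times) gives $S_k/k\to 1$ a.s., so $g_n(t)\to t$ pointwise a.s. The same monotonicity argument as above, applied to the non-decreasing sequence $g_n$, yields uniform convergence on $[0,T]$.

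For part (ii), $\mathcal{N}(t)$ is Poisson with parameter $t$, and its moments are expressible via Touchard/Bell polynomials:
\[\mathbb{E}\bigl[\mathcal{N}(t)^k\bigr] = \sum_{j=1}^{k} S(k,j)\, t^j,\]
where the $S(k,j)$ are Stirling numbers of the second kind (so there is no constant term). Setting $B_k=\sum_{j=1}^{k}S(k,j)$ (the $k$-th Bell number), I split into cases: for $t\leq 1$ each $t^j\leq t$ so $\mathbb{E}[\mathcal{N}(t)^k]\leq B_k t$, while for $t\geq 1$ each $t^j\leq t^k$ so $\mathbb{E}[\mathcal{N}(t)^k]\leq B_k t^k$. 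The constant $c_k:=B_k$ then gives $\mathbb{E}[\mathcal{N}(t)^k]\leq c_k(t^k\vee t)$ for every $t\geq 0$.

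Neither step poses a real obstacle; the only mild care is needed to justify uniform convergence of $g_n$, since generalised inverses do not in general behave well under convergence, but here the explicit identification with scaled arrival times together with monotonicity makes the argument routine.
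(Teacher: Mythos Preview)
Your proof is correct and follows essentially the same line as the paper's: law of large numbers for $f_n$, then the standard monotone/continuous-limit argument to pass from pointwise to locally uniform convergence. For $g_n$ you take a slightly more concrete route than the paper, which instead argues via subsequences using the relation $f_n(g_n(t))=n^{-\gamma}\lfloor n^{\gamma}t\rfloor$ together with the already-established uniform convergence of $f_n$; your explicit identification $g_n(t)=n^{-\gamma}S_{\lfloor n^{\gamma}t\rfloor}$ with the scaled arrival times is arguably cleaner and avoids any worry about inverses. For part (ii) the paper simply invokes this as a standard fact about Poisson moments, whereas you supply the explicit Stirling/Bell-number computation---a harmless addition.
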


\begin{proof} For point (i) we use classical arguments: since the considered functions are monotone and the limit we are looking for is continuous, we only need to prove a.s. pointwise convergence for, say, rational $t$. First for $f_n$, $n^{-\gamma}\mathcal{N}(n^{\gamma}t)$ a.s. converges to $t$ by the law of large numbers. The same then becomes true for the inverse: if any subsequence of $(g_n(t),n\in\N)$ converges to $s \in [0,+\infty]$ then, given that $f_n(g_n(t))=n^{-\gamma}\lfloor n^{\gamma}t\rfloor$, and that $f_n$ is non--decreasing and converges uniformly on compacts to the identity function, we get that $s=t$. Point (ii) is a standard result on moments of the Poisson distribution.
\end{proof}

\medskip

Now, since we have $X_n(\lfloor n^{\gamma}t\rfloor)=X_n^c(n^{\gamma}g_n(t))$, Lemma \ref{lem:passagecontinu} implies that
\begin{itemize}
\item Theorem \ref{ThMixing} (and \ref{ThMixingjoint}) can be proven by showing that $X_n^c$ (and its absorption time, as well as its moments) has the wanted scaling limit.
\item Proposition \ref{prop:mix:tight} and Lemma \ref{lemma:mix:scale} also apply to $X_n^c$ and $J_n^c$, with obvious modifications. 
\end{itemize}
We adapt all the previous notation, defining
\[
Y_n^c(t):=\frac{X_n^c( n^{\gamma} t)}{n}, \quad
Z_n^c(t):=Y_n^c(\tau_n^c(t)), \quad \text{and} \quad K_n^c(t):=J_n^c(n^{\gamma}\tau_n^{c}(t)) \quad t \geq 0,
\]
where 
$\tau_n^{c}(t)=\inf\left\{u:\int_0^u (Y^c_n(r))^{-\gamma} \mathrm dr>t\right\}.$ \textbf{We now aim at proving Theorem \ref{ThMixing} and Theorem \ref{ThMixingjoint} for the continuous--time process $(X_n^c,J_n^c)$}.

\smallskip

The tightness from Proposition \ref{prop:mix:tight} implies that $(Y_n^{c})$ will converge to $X$ in distribution if every converging subsequence of $(Y_n^{c})$ has $X$ as limiting distribution. We consider such a converging subsequence, and using Skorokhod's embedding theorem, suppose that this subsequence converges \textit{almost surely} to a process $Y'$. {\bf We will  only work on this subsequence from now on}, omitting sometimes to mention it. By Lemma \ref{lampsko}, this implies in fact that the pair $(Y_n^{c},Z_n^{c})$ converges a.s. to $(Y',Z')$, where $Z'$ is the Lamperti transform of $Y'$: for $t\geq 0$, $Z'(t)=Y'(\tau(t))$ where $\tau(t)=\inf\{s\geq0, \int_{0}^s (Y'(r))^{-\gamma} \mathrm d r>t\}$. What we want to do is to show that $\left(-\log(Z')\right)$ is necessarily a subordinator with Laplace exponent $\psi$ defined in (\ref{mix:defpsi}), which will be done by proving that $(Z'(t)^{\lambda}e^{t\psi(\lambda)},t\geq0)$ is a martingale for all $\lambda>0$.

We introduce some more notation: if $f\in\D([0,\infty),[0,\infty))$ satisfies $f(0)=1$ and $\veps>0$, then we let
\[T_{\veps}(f)=\inf\big\{t\geq 0, f(t)\leq \veps\big\}.\]
Note then well that, by Skorokhod convergence, for all $\veps>0$ except a countable set, $T_{\veps}(Y^{c}_n)$ and $T_{\veps}(Z_n^{c})$ converge a.s. to $T_{\veps}(Y')$ and $T_{\veps}(Z')$, and the stopped processes  $(Y_n^{c}(t\wedge T_{\veps}(Y_n^{c})),t\geq 0)$ and \linebreak $(Z_n^{c}(t\wedge T_{\veps}(Z_n^{c})),t\geq 0)$ converge in the Skorokhod sense to $(Y'(t\wedge T_{\veps}(Y')),t\geq 0)$ and $(Z'(t\wedge T_{\veps}(Z')),t\geq 0).$ This is explained in the proof of Lemma 3 of \cite{HM11}. {\bf We will now only work with such $\veps$}.

\subsubsection{About the mixing of types}
\label{sec:presmixing}

The following proposition formalises how the types mix in $(X_n^c,J_n^c)$.

\begin{prop}
\label{prop:melange}
For all $i\in\{1,\ldots,\kappa\}$ and $\veps>0$, we have the following convergence in probability of measures:
\[\mathbbm{1}_{\{s\leq  T_{\veps}(Z^c_n)\}} \mathbbm{1}_{\{K^c_n(s)=i\}} \mathrm ds \overset{(\pr)}{\underset{n\to\infty}\longrightarrow} \mathbbm{1}_{\{s\leq  T_{\veps}(Z')\}}\pi_i \mathrm ds.\]
\end{prop}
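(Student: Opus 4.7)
I would prove that for every continuous compactly supported $g:[0,\infty)\to\mathbb R$,
\[
\int_0^{\infty}g(s)\,\mathbbm{1}_{\{s\leq T_{\veps}(Z_n^c)\}}\,\mathbbm{1}_{\{K_n^c(s)=i\}}\,\mathrm ds\;\overset{(\pr)}\longrightarrow\;\pi_i\int_0^{\infty}g(s)\,\mathbbm{1}_{\{s\leq T_{\veps}(Z')\}}\,\mathrm ds,
\]
which yields the claimed weak convergence of (random) measures in probability. On the fixed subsequence the a.s.\ convergence $T_\veps(Z_n^c)\to T_\veps(Z')$ (and of the stopped $Z_n^c$'s) implies that the right-hand side is the a.s.\ limit of $\pi_i\int_0^\infty g(s)\mathbbm 1_{\{s\le T_\veps(Z_n^c)\}}\mathrm ds$, so it is enough to show, for each fixed $T>0$,
\[
\int_0^{T\wedge T_\veps(Z_n^c)}g(s)\bigl[\mathbbm{1}_{\{K_n^c(s)=i\}}-\pi_i\bigr]\mathrm ds\;\overset{(\pr)}\longrightarrow\;0.
\]

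The first step is to undo the Lamperti time change so as to work with the unrescaled $J_n^c$. Substituting $s=\rho_n(u)$ with $\rho_n=(\tau_n^c)^{-1}$ (so $\mathrm ds = Y_n^c(u)^{-\gamma}\mathrm du$ and $K_n^c(s)=J_n^c(n^\gamma u)$), then $v=n^\gamma u$, the above integral becomes
\[
n^{-\gamma}\int_0^{V_n}h_n(v)\bigl[\mathbbm{1}_{\{J_n^c(v)=i\}}-\pi_i\bigr]\mathrm dv,
\]
where $V_n=n^\gamma(\tau_n^c(T)\wedge T_\veps(Y_n^c))$ is tight of order $n^\gamma$ and the weight $h_n(v)=g(\rho_n(v/n^\gamma))(X_n^c(v)/n)^{-\gamma}$ is bounded by $\|g\|_\infty\veps^{-\gamma}$ on $[0,V_n]$, since $X_n^c(v)\geq n\veps$ there. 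Hence it suffices to show that for any uniformly bounded sequence $(h_n)$ of (path-measurable) weights, this rescaled integral tends to $0$ in probability.

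The core of the argument is a block-averaging estimate. Fix $\beta<\alpha<\gamma$ and cut $[0,n^\gamma T]$ into $M_n=\lceil n^{\gamma-\alpha}T\rceil$ intervals of length $n^\alpha$. On each block contained in $[0,V_n]$, assumption $(\mathsf H_{\mathrm{mix}})$(ii) gives $P_m-I=m^{-\beta}(Q+o(1))$ uniformly in $m=X_n^c(v)\in[n\veps,n]$, so the type chain jumps at rate comparable to $n^{-\beta}$ and experiences a number of transitions of order $n^{\alpha-\beta}\to\infty$ per block. A standard spectral-gap estimate for the irreducible generator $Q$ then produces an $L^2$-bound of order $n^{\alpha-(\alpha-\beta)/2}$ on the deviation of the empirical occupation time of type $i$ from $\pi_i n^\alpha$; summing over the $M_n$ blocks and dividing by $n^\gamma$ leaves a global $L^2$-error of order $n^{-(\alpha-\beta)/2}\to 0$. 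The single boundary block, on which $X_n^c$ may drop below $n\veps$, contributes at most $n^{\alpha-\gamma}\|g\|_\infty\veps^{-\gamma}T\to 0$ and is thus negligible.

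The main obstacle will be formalising this block-mixing estimate rigorously, because $J_n^c$ is not a time-homogeneous Markov chain: its instantaneous generator is $X_n^c(v)^{-\beta}(Q+o(1))$, with $X_n^c$ itself a random, varying environment. The remedy is that on the event $\{X_n^c\geq n\veps\}$ this generator is sandwiched between $c_\veps n^{-\beta}Q$ and $C_\veps n^{-\beta}Q$ for constants depending only on $\veps$ and $\beta$, which allows us to couple $J_n^c$ on each block with a reference continuous-time Markov chain of generator $n^{-\beta}Q$ (whose spectral gap is $n^{-\beta}\lambda$, $\lambda>0$, by irreducibility of $Q$) and transfer the quantitative ergodic theorem for $Q$ to the random environment. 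A minor additional point is that one needs $(\mathsf H_{\mathrm{mix}})$(i) only through the bound \eqref{mix:c1} and the tightness of Proposition \ref{prop:mix:tight} to control $V_n$, which has already been established.
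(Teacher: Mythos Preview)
Your strategy—undo the $\gamma$-Lamperti change, chop $[0,V_n]$ into blocks of mesoscopic length $n^\alpha$ with $\beta<\alpha<\gamma$, and control the occupation time on each block via the spectral gap of $Q$—is a legitimate alternative route. The paper proceeds differently: it performs a \emph{second} Lamperti time change, now with index $\beta$, so that the time-changed type process $K_n^{(\beta)}$ has instantaneous generator $k^\beta(P_k-I)\to Q$ when $X_n^c=k$; it then couples $K_n^{(\beta)}$ jump by jump with a genuine continuous-time chain $L$ of generator $Q$ (Lemma~\ref{lem:closetoL} and the intermediate process $L'$ built afterwards), applies the ordinary ergodic theorem to $L$, and transports the conclusion back through the time change via Lemma~\ref{lem:mixchangescale}. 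Your block-averaging replaces this explicit coupling by a quantitative $L^2$ bound and could in principle yield rates; the paper's coupling is more hands-on but needs the careful bookkeeping of Lemma~\ref{lem:kmax}.

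The one soft spot in your plan is the claim that the generator is ``sandwiched between $c_\veps n^{-\beta}Q$ and $C_\veps n^{-\beta}Q$''. No matrix order makes this literal; what actually holds is that the generator at time $v$ equals $X_n^c(v)^{-\beta}(Q+E_n(v))$, with a random scalar prefactor in $[n^{-\beta},\veps^{-\beta}n^{-\beta}]$ and a matrix perturbation $\|E_n(v)\|\leq\eta_{\lfloor n\veps\rfloor}\to 0$. The scalar prefactor can be absorbed by a random time change—which is exactly the paper's $\beta$-Lamperti transform—but $E_n$ shifts the instantaneous stationary measure away from $\pi$ by $O(\eta_{\lfloor n\veps\rfloor})$ and is not handled by naive comparison with a fixed $Q$-chain. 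Making your block estimate rigorous therefore requires either that time change (after which you are essentially in the paper's setup) or a genuine time-inhomogeneous mixing bound conditional on the path of $X_n^c$. Both are feasible, but this is precisely where the work lies, and your sketch underestimates it.
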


The meaning of Proposition \ref{prop:melange} is that the types spread themselves out evenly, and that we have each type $i$ a proportion $\pi_i$ of the time. As the proof will show, we must stop at time $T_{\veps}(Z_n)$ in order to use $(\mathsf H_{\mathrm{mix}})$.

\medskip

\noindent \textbf{Remark.} Convergence in probability implicitly refers to the Prokhorov metric for measures. Some of its elementary properties are provided in Appendix \ref{sec:cvprobamesures}.

Since the proof of Proposition \ref{prop:melange} is very involved and contains most of the difficulty, we postpone it to  Section \ref{prop:melange}, and first use it to prove Theorems \ref{ThMixing} and \ref{ThMixingjoint}.

\subsubsection{Some martingales}

\begin{lem} Let $\lambda>0$ and $n\in\N$, and define a process $M_n^{(\lambda)}$ by
\[M_n^{(\lambda)}(t)=\Big(\frac{X_n^c(t)}{n}\Big)^{\lambda}\exp\Big(\int_{0}^t \left(1-G^{(J_n^c(s))}_{X_n^c(s)}(\lambda)\right) \,\mathrm d s\Big)\]
if $X_n^c(t)\neq 0$, while $M_n^{(\lambda)}(t)=0$ if $X_n^c(t)= 0.$
Then $M_n^{(\lambda)}$ is a martingale in the natural filtration of $(X_n^c,J_n^c)$. As a consequence, the time--changed process $\mathcal{M}_n^{(\lambda)}$ defined by

\begin{equation}\label{themartingale}
\mathcal{M}_n^{(\lambda)}(t)=M_n^{(\lambda)}(n^{\gamma}\tau_n^c(t))=\big(Z_n^c(t)\big)^{\lambda}\exp\Big(\int_{0}^{n^{\gamma}\tau_n^c(t)} \Big(1-G^{(J_n^c(s))}_{X_n^c(s)}(\lambda)\Big)\,\mathrm d s\Big)
\end{equation}
is also a martingale.
\end{lem}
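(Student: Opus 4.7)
The plan is to first read $M_n^{(\lambda)}$ as the exponential (Dynkin) compensator of $N(t) := (X_n^c(t)/n)^\lambda$ for the continuous--time Markov chain $(X_n^c, J_n^c)$, and then deduce the martingale property of $\mathcal{M}_n^{(\lambda)}$ by a short optional sampling argument. Since $(X_n^c, J_n^c)$ is obtained from $(X_n, J_n)$ by subordination to an independent unit--rate Poisson process, it is a continuous--time Markov chain on $\mathbb{Z}_+ \times \{1,\ldots,\kappa\}$ with jumps at rate~$1$ and transition kernel $(p_{m,j}(m',j'))$. Applying its infinitesimal generator $\mathcal{L}$ to $f(m,j) = (m/n)^\lambda$ and using the definition~(\ref{mix:defG}) of $G_m^{(j)}(\lambda)$, one obtains
\[
\mathcal{L} f(m,j) = \sum_{m',j'} p_{m,j}(m',j')\bigl((m'/n)^\lambda - (m/n)^\lambda\bigr) = -\Bigl(\frac{m}{n}\Bigr)^{\lambda}\bigl(1 - G_m^{(j)}(\lambda)\bigr).
\]

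By Dynkin's formula, there exists a local martingale $V$ in the natural filtration of $(X_n^c, J_n^c)$ such that $N(t) = 1 + V(t) - \int_0^t N(s)h(s)\,\mathrm{d}s$, where $h(s) := 1 - G_{X_n^c(s)}^{(J_n^c(s))}(\lambda)$. Setting $E(t) = \exp\bigl(\int_0^t h(s)\,\mathrm{d}s\bigr)$, the process $E$ is continuous and of finite variation with $\mathrm{d}E = hE\,\mathrm{d}t$, so the integration by parts formula (which produces no covariation term because $E$ is absolutely continuous) yields
\[
\mathrm{d}(NE) = N\,\mathrm{d}E + E\,\mathrm{d}N = NEh\,\mathrm{d}t + E(\mathrm{d}V - Nh\,\mathrm{d}t) = E\,\mathrm{d}V.
\]
Hence $M_n^{(\lambda)} = NE$ is a local martingale in the natural filtration. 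It is actually a genuine martingale: on every compact interval $[0,T]$ one has $N \leq 1$ and $0 \leq h \leq 1$, so $E \leq e^T$ and thus $M_n^{(\lambda)} \leq e^T$ uniformly. The convention $M_n^{(\lambda)}(t) = 0$ at the absorbing state is consistent with $(0/n)^\lambda = 0$, and whatever value one assigns to $h$ after absorption is irrelevant since the prefactor $N$ vanishes there.

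For the time--changed process, note first that $n^\gamma \tau_n^c(t)$ is a stopping time for the natural filtration of $(X_n^c, J_n^c)$, because $\{n^\gamma \tau_n^c(t) \leq u\} = \{\int_0^{u/n^\gamma} Y_n^c(r)^{-\gamma}\,\mathrm{d}r \geq t\}$ is $\mathcal{F}_u$--measurable. The crucial observation is the \emph{uniform} bound $\tau_n^c(t) \leq t$: since $Y_n^c \in [0,1]$, the integrand $Y_n^c(r)^{-\gamma} \geq 1$ on $\{Y_n^c(r)>0\}$, so the integral defining $\tau_n^c$ reaches $t$ by time $t$ at the latest, giving $n^\gamma \tau_n^c(t) \leq n^\gamma t$ almost surely. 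Doob's optional sampling theorem applied to $M_n^{(\lambda)}$ (which is bounded on $[0, n^\gamma t]$) along the increasing family of bounded stopping times $(n^\gamma \tau_n^c(s))_{s \leq t}$ then gives the martingale property of $\mathcal{M}_n^{(\lambda)}$ in the time--changed filtration $(\mathcal{F}_{n^\gamma \tau_n^c(t)})_{t\geq 0}$. The only subtle point in the whole argument is the integration by parts, which is clean here precisely because the compensator of $N$ is absolutely continuous and so $E$ contributes no jump term to the product.
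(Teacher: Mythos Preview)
Your proof is correct and takes a different route from the paper's. The paper argues directly: it uses the multiplicative structure $M_n^{(\lambda)}(t)=M_n^{(\lambda)}(s)\cdot(\text{fresh copy})$ coming from the Markov property to reduce the martingale property to $\mathbb{E}[M_n^{(\lambda)}(t)]=1$, and then computes by hand that the right derivative of $t\mapsto\mathbb{E}[M_n^{(\lambda)}(t)]$ vanishes, expanding in $h$ according to whether the Poisson clock rings zero, one, or more times in $[t,t+h]$. You instead invoke the generator of $(X_n^c,J_n^c)$, write $N(t)=(X_n^c(t)/n)^\lambda$ as a Dynkin semimartingale $N=1+V-\int Nh$, and obtain $M_n^{(\lambda)}=NE$ via an integration by parts that kills the drift, followed by a boundedness argument to pass from local to true martingale. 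Your approach is the standard semimartingale compensation argument and is arguably more transparent; the paper's computation is more elementary in that it avoids any appeal to stochastic calculus, at the cost of being somewhat heavier. Both handle the time change the same way, via the deterministic bound $\tau_n^c(t)\le t$ and optional sampling.
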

\begin{proof}
The martingale property of $\mathcal{M}_n^{(\lambda)}$ is a direct consequence of that of $M_n^{(\lambda)}$ and that the stopping time $\tau_n^c(t)$ is smaller than $t$ for all $t\geq0.$ We thus focus on $M_n^{(\lambda)}.$
Notice first that, for $s\leq t$, if $X_n^c(s)>0$, then
\begin{equation}\label{Markovformartingale}
M_n^{(\lambda)}(t)=M_n^{(\lambda)}(s)\Big(\frac{X_n^c(t)}{X_n^c(s)}\Big)^{\lambda}\exp\Big(\int_{s}^{t} \Big(1-G^{(J_n^c(u))}_{X_n^c(u)}(\lambda) \Big)\,\mathrm d u\Big).
\end{equation}
By the Markov property, conditionally on the past up to time $s$, the last two terms form a copy of $\big(M_{X_n^c(s)}^{(\lambda)}\big)'(t-s),$ where $\big(M_{X_n^c(s)}^{(\lambda)}\big)'$ is an independent version of the same martingale when the process starts at $(X_n^c(s),J_n^c(s)).$ Thus we are reduced to showing that $\mathbb{E}[M_n^{(\lambda)}(t)]=1$ for all $t\geq 0$, and we will do this by showing that the right derivative of this function is $0$ at all points.
When $h\geq 0$ is small, we know that the probability of the Poisson process $\mathcal{N}$ having one jump in $[t,t+h]$ is of order $h$, while the probability of there being two jumps is of order $O(h^2)$. We thus have, using (\ref{Markovformartingale}) a second time, the following asymptotic expansion:
\begin{align*}
& \mathbb{E}\left[M_n^{(\lambda)}(t+h)\right]=(1-h)\mathbb{E}\left[M_n^{(\lambda)}(t)\e^{h\left(1-G^{(J_n^c(t))}_{X_n^c(t)}(\lambda)\right)}\right]   \\  &+ \mathbb{E}\left[M_n^{(\lambda)}(t)\Big(\frac{X_n(\mathcal{N}(t)+1)}{X_n^c(t)}\Big)^{\lambda}\int_0^h e^{-s}\exp\left( s\left(1-G_{X_n^c(t)}^{(J_n^c(t))}(\lambda)\right)+(h-s)\left(1-G_{X_n(\mathcal{N}(t)+1)}^{J_n(\mathcal{N}(t)+1)}(\lambda)\right)\right)\mathrm d s\right] \\ &+ O(h^2).
\end{align*}
Note that the term inside the second expectation is by convention $0$ if $X_n^c(t)=0$. Since we have $G_k^{(j)}(\lambda)\leq1$ for all $j$ and $k$, $M_n^{(\lambda)}(t)\leq \e^t$ and $X_n(\mathcal{N}(t)+1)\leq X_n^c(t)$, we can safely integrate the $O$ terms and take them out of expected values, and thus
\begin{align*}
\mathbb{E}\left[M_n^{(\lambda)}(t+h)\right]&=(1-h)\mathbb{E}\left[M_n^{(\lambda)}(t)\Big(1+h\big(1-G^{(J_n^c(t))}_{X_n^c(t)}(\lambda)\big)\Big)\right] \\
&+h\mathbb{E} \left[M_n^{(\lambda)}(t)\Big(\frac{X_n(\mathcal{N}(t)+1)}{X_n^c(t)}\Big)^{\lambda}\int_0^h (1+O(s))(1+O(s))\big(1+O(h-s)\big)\mathrm ds\right]+O(h^2) \\
&=\mathbb{E}\left[M_n^{(\lambda)}(t)\big(1-hG^{(J_n^c(t))}_{X_n^c(t)}(\lambda)\big)\right]   +h\mathbb{E}\left[M_n^{(\lambda)}(t)\Big(\frac{X_n(\mathcal{N}(t)+1)}{X_n^c(t)}\Big)^{\lambda}\right]+O(h^2).
\end{align*}
Since the conditional expectation of $\Big(\frac{X_n(\mathcal{N}(t)+1)}{X_n^c(t)}\Big)^{\lambda}$ given $\big(X_n(\mathcal{N}(t)),J_n(\mathcal{N}(t))\big)=\big(X_n^c(t),J_n^c(t)\big)$ is equal to $G^{(J_n^c(t))}_{X_n^c(t)}(\lambda)$ by definition, we end up with no term of order $h$, and a derivative equal to $0$ at $t$.
\end{proof}

\subsection{End of the proof of Theorem \ref{ThMixing}: scaling limit of the position marginal}
\label{secTh1}

From Section \ref{sec:continuoustime}, it is sufficient to prove a version of Theorem \ref{ThMixing} for the continuous--time process $(X_n^c,J_n^c)$. Moreover, relying on the tightness established in Section \ref{sec:tightness}, it was noticed, still in Section \ref{sec:continuoustime}, that such a version of Theorem \ref{ThMixing}  will be proved if for any possible limit $Y'$ of a subsequence of $(Y_n^c)$, the process $\big((Z'(t))^{\lambda}e^{t\psi(\lambda)},t\geq0\big)$ is a martingale, where $Z'$ denotes the $(-\gamma)$--Lamperti transform of $Y'$. It was also noticed that there is not loss of generality in assuming that the convergences are almost sure. To simplify the notation below, we let $(Y_n^c,Z_n^c)$ denote a subsequence that converges (almost surely) to $(Y',Z')$, with a slight abuse in the indices notation.

Our aim is therefore to show that the martingale $\mathcal{M}_n^{(\lambda)}$ introduced in (\ref{themartingale}) converges to the process $\big((Z'(t))^{\lambda}e^{t\psi(\lambda)},t\geq0\big)$ in a strong enough sense for the latter to also be a martingale. To do so, we first fix an $\varepsilon>0$ with the properties required at the end of Section \ref{sec:continuoustime},  and stop $\mathcal{M}_n^{(\lambda)}$ at time $T_{\veps}(Z^c_n)$, and show that the process $\big(\mathcal{M}_n^{(\lambda)}(t\wedge T_{\veps}(Z^c_n)), t\geq 0\big)$ converges in probability for the Skorokhod metric to $\big(Z'(t\wedge T_{\veps}(Z'))^{\lambda}\exp(\psi(\lambda)(t\wedge T_{\veps}(Z')),t\geq 0\big)$. Recalling the definition (\ref{themartingale}) of the martingale $\mathcal{M}_n^{(\lambda)}$ and that $(Z_n^c)$ converges a.s. in the Skorokod sense to $Z'$, it only remains to check that the term
\[\exp\Big(\int_0^{ n^{\gamma}(\tau^c_n(t\wedge T_{\veps}(Y^c_n)))}\big(1-G^{(J^c_n( s ))}_{X^c_n(s)}(\lambda)\big)\mathrm d s\Big)
\]
converges in probability uniformly on all compact sets to $\exp\big(\psi(\lambda)(t\wedge T_{\veps}(Z'))\big).$ By a variation of a classical argument (using subsequences to bring ourselves back to almost--sure convergence, see the proof of Lemma \ref{lem:interversion} for a similar reasoning), since these functions are nondecreasing and the limit is continuous, we only need to show pointwise convergence in probability.

Write, for $n\geq \veps^{-1}$,
\[\int_{0}^{n^{\gamma}(\tau^c_n(t\wedge T_{\veps}(Y^c_n)))}\big(1-G^{(J^c_n(s))}_{X^c_n(s)}(\lambda)\big)\mathrm d s
   = \int_0^{t\wedge T_{\veps} (Z^c_n)} \big(1-G^{(K^c_n(r))}_{nZ^c_n(r)}(\lambda)\big)\big(nZ^c_n(r)\big)^{\gamma}\mathrm d r.\]
We split the integrand according to the different types. For all $j\in\{1,\ldots,\kappa\}$, we have by (\ref{mix:convG})
\[\big(1-G^{(j)}_{nZ^c_n(r)}(\lambda)\big)(nZ^c_n(r))^{\gamma} \underset{n \rightarrow \infty}\to \psi_j(\lambda)\]
and this is uniform in $r$ as long as we stay before time $T_{\veps}(Z^c_n)$, since we then have $nZ^c_n(r)\geq n\veps$. This lets us write 

\begin{align*}
\int_0^{t\wedge T_{\veps} (Z^c_n)} &\big(1-G^{(j)}_{nZ^c_n(r)}(\lambda)\big)(nZ^c_n(r))^{\gamma}\mathbbm{1}_{\{K^c_n(r)=j\}}\mathrm d r=\int_0^{t\wedge T_{\veps} (Z^c_n)} \psi_j(\lambda)\mathbbm{1}_{\{K^c_n(r)=j\}}\mathrm d r \\ 
& + \int_0^{t\wedge T_{\veps} (Z_n)}\left(\Big(1-G^{(j)}_{nZ^c_n(r)}(\lambda)\Big)(nZ^c_n(r))^{\gamma}-\psi_j(\lambda)\right)\mathbbm{1}_{\{K^c_n(r)=j\}}\mathrm d r.
\end{align*}
The first term of the right--hand side converges in probability to $\psi_j(\lambda)\pi_j(t\wedge T_{\veps}(Z'))$ by Proposition \ref{prop:melange} and the second to $0$ by the aforementioned uniform convergence.

Uniform integrability arguments will then transfer the martingale property of  $\mathcal{M}_n^{(\lambda)}$ to \linebreak $(Z'(t)^{\lambda}\exp(\psi(\lambda)t),t\geq 0).$ Specifically, note first that, for $n\geq \veps^{-1}$, using (\ref{mix:c1}), we have
\begin{align*}
\mathcal M_n^{(\lambda)}(t\wedge T_{\veps}(Z^c_n))
   &\leq \exp\Big(\int_0^{ t\wedge T_{\veps}(Z_n^c)}(nZ^c_n(r))^{\gamma}\big(1-G^{(K_n^c(r))}_{nZ^c_n(r)}(\lambda)\big)\mathrm d r\Big)\\
   &\leq \e^{c(\lambda)t},
\end{align*}
and thus, for fixed $t$, $\big(\mathcal{M}_n^{(\lambda)}(t\wedge T_{\veps}(Z^c_n)),n\geq\veps^{-1}\big)$ is uniformly integrable, implying by \cite{EK}, Example 7, p.362, that the limit process $\big(Z'(t\wedge T_{\veps}(Z'))^{\lambda}\exp\big(\psi(\lambda)(t\wedge T_{\veps}(Z'))\big),t\geq 0\big)$ is a martingale. Similarly, for fixed $t$ and for all $\veps>0$, $Z'(t\wedge T_{\veps}(Z'))^{\lambda}\exp\big(\psi(\lambda)(t\wedge T_{\veps}(Z'))\big)\leq \e^{\psi(\lambda)t}$, and we therefore have uniform integrability as $\veps$ tends to $0$, preserving the martingale property for the limit. \qed

\subsection{Proof of Theorem \ref{ThMixingjoint}: scaling limit of the absorption time}
\label{secTh2}

We assume here that for all $i\in\{1,\ldots,\kappa\}$, the measure $\mu^{(i)}$ of hypothesis $(\mathsf{H_{\mathrm{mix}}})$ is nontrivial. As a consequence, for all $\lambda>0$, there exists $c'(\lambda)>0$ such that, for $n$ which is not an absorbing state,
\begin{equation}\label{mix:c3}
1-G_n^{(i)}(\lambda)\geq n^{-\gamma}c'(\lambda), \quad \forall i \in \{1,\ldots,\kappa\}.
\end{equation}
Also, as in Section \ref{SecCritique}, we now make the extra assumption that {\bf the only absorbing state for $X$ is $0$.} Just as in that section, proving Theorem \ref{ThMixingjoint} under this assumption is enough to deduce the general case. Thus inequality (\ref{mix:c3}) becomes true for all $n\in\N$.

Our goal is to show that jointly with the convergence of $(Y_n^c,Z_n^c)$ towards $(X,Z)$ proved in the previous section,  the absorption time $A_n^c$ of $Y_n^c$ (or $X_n^c$) at 0 satisfies $A_n^c/n^{\gamma} \rightarrow I$ in distribution, and that there is also convergence of all positive moments. We recall that $I$ denotes the absorption time at 0 of the process $X$. We start with a preliminary lemma.

\begin{lem} 
\label{lem:expo}
For all $n\in\N,$ $\lambda>0,$ and $t\geq0$, we have
\[\mathbb{E}\big[(Z^c_n(t))^{\lambda}\big]\leq \e^{-c'(\lambda)t},\]
where $c'(\lambda)$ was introduced in (\ref{mix:c3}).
\end{lem}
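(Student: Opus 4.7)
The plan is to use the martingale $\mathcal{M}_n^{(\lambda)}$ defined in (\ref{themartingale}) and the new lower bound (\ref{mix:c3}) on $1 - G_n^{(i)}(\lambda)$. Since $\mathcal{M}_n^{(\lambda)}(0) = 1$, the martingale property gives $\mathbb{E}[\mathcal{M}_n^{(\lambda)}(t)] = 1$ for all $t\geq 0$, so the estimate will follow if I can show a deterministic lower bound
\[\int_{0}^{n^{\gamma}\tau_n^c(t)} \bigl(1-G^{(J_n^c(s))}_{X_n^c(s)}(\lambda)\bigr)\,\mathrm d s \;\geq\; c'(\lambda)\,t\]
on the event $\{Z_n^c(t)>0\}$ (the case $Z_n^c(t)=0$ being trivial since then $\mathcal{M}_n^{(\lambda)}(t)=0$ contributes nothing to the expected value).

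The key step is a change of variable $s = n^{\gamma} u$ followed by $u = \tau_n^c(r)$, which is the inverse of $r\mapsto \int_0^r Z_n^c(v)^{\gamma}\mathrm dv$ on the time--constancy interval before absorption. Under these changes one finds $\mathrm du = Z_n^c(r)^{\gamma}\mathrm dr$, $X_n^c(n^{\gamma}\tau_n^c(r)) = nZ_n^c(r)$ and $J_n^c(n^{\gamma}\tau_n^c(r)) = K_n^c(r)$, so that
\[\int_{0}^{n^{\gamma}\tau_n^c(t)} \bigl(1-G^{(J_n^c(s))}_{X_n^c(s)}(\lambda)\bigr)\,\mathrm d s = n^{\gamma}\int_0^t \bigl(1-G^{(K_n^c(r))}_{nZ_n^c(r)}(\lambda)\bigr) Z_n^c(r)^{\gamma}\,\mathrm d r.\]
Applying (\ref{mix:c3}) pointwise in $r$, namely $1-G^{(K_n^c(r))}_{nZ_n^c(r)}(\lambda) \geq c'(\lambda)(nZ_n^c(r))^{-\gamma}$, the integrand simplifies and one obtains the desired lower bound $c'(\lambda)\,t$.

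Plugging this into the definition of $\mathcal{M}_n^{(\lambda)}$ yields $\mathcal{M}_n^{(\lambda)}(t) \geq (Z_n^c(t))^{\lambda} e^{c'(\lambda)t}$ almost surely, and taking expectation gives the claim. I do not anticipate any serious obstacle; the only point requiring minimal care is that the identity $\int_0^{\tau_n^c(t)} Y_n^c(r)^{-\gamma}\mathrm dr = t$ (hence the validity of the change of variable) holds only for $t < T_0(Z_n^c)$, but since $Z_n^c(t)=0$ for $t\geq T_0(Z_n^c)$ the inequality $\mathbb{E}[(Z_n^c(t))^{\lambda}]\leq e^{-c'(\lambda)t}$ is trivially preserved there.
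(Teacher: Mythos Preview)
Your argument is correct and essentially identical to the paper's: both use the martingale $\mathcal{M}_n^{(\lambda)}$, apply the lower bound (\ref{mix:c3}) to $1-G^{(j)}_m(\lambda)$, and use the defining relation $\int_0^{\tau_n^c(t)} Y_n^c(u)^{-\gamma}\,\mathrm du = t$ on $\{Z_n^c(t)>0\}$ to obtain $(Z_n^c(t))^{\lambda}\leq \mathcal{M}_n^{(\lambda)}(t)\,\e^{-c'(\lambda)t}$. The only cosmetic difference is that the paper applies the pointwise bound first and then recognizes the resulting integral as $t$, whereas you perform the change of variable first and then apply the bound.
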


\begin{proof}
Recall that when $Z^c_n(t)>0$
\[
(Z^c_n(t))^{\lambda}=\mathcal{M}_n^{(\lambda)}(t)  \exp\Big(\int_0^{ n^{\gamma}\tau^c_n(t)}\big(G^{(J^c_n(s ))}_{X^c_n( s )}(\lambda)-1\big)\mathrm d s\Big), \]
where $\big(\mathcal{M}_n^{(\lambda)}(t),t\geq0\big)$ is a martingale. Using (\ref{mix:c3}), we have, still when $Z^c_n(t)>0$,
\begin{align*}
\int_0^{n^{\gamma}\tau^c_n(t)}\big(G^{(J^c_n(s ))}_{X^c_n( s )}(\lambda)-1\big)\mathrm d s &\leq -c'(\lambda)\int_0^{ n^{\gamma}\tau^c_n(t)} X^c_n(s)^{-\gamma} \mathrm d s\\
&\leq -c'(\lambda)\int_0^t \mathrm d s.
\end{align*}
Hence $(Z^c_n(t))^{\lambda} \leq \mathcal{M}_n^{(\lambda)}(t) \exp(-c'(\lambda)t)$ in any case. We can then take the expectation.
\end{proof}

\medskip

The rest of the proof of Theorem \ref{ThMixingjoint} goes as the one of Theorem 2 in \cite{HM11}, so we only sketch it: since the only absorbing state is $0$, we have 
\begin{equation}
\label{eq:AnInt}
\frac{A^c_n}{n^{\gamma}}=\int_0^{\infty}(Z^c_n(r))^{\gamma} \mathrm d r,
\end{equation} 
and thus the expectations of $n^{-\gamma}A^c_n$ are uniformly bounded (using Lemma \ref{lem:expo}), and thus $(n^{-\gamma}A^c_n,n\in\N)$ is tight. Up to using the Skorokhod representation theorem and extracting, we can assume that the triplet $(Y^c_n,Z^c_n,n^{-\gamma}A^c_n)$ converges a.s. to $(X,Z,I'),$ and we only need to check that $I'=I$, where $I=\int_0^{\infty}Z(t)^{\gamma} \mathrm d t$ is the extinction time of $X$. The Skorokhod convergence first shows that $Y^c_n(n^{-\gamma}A^c_n)$, which is equal to $0$, converges to $X(I')$, implying $I'\geq I$. On the other hand, dominated convergence and Fatou's lemma give us
\[ \mathbb{E}[I']=\mathbb{E}\Big[\lim \int_0^{\infty}(Z^c_n(r))^{\gamma} \mathrm d r\Big]\leq \liminf \mathbb{E}\Big[ \int_0^{\infty}(Z^c_n(r))^{\gamma} \mathrm d r\Big]\underset{\text{by Lemma \ref{lem:expo}}}=\mathbb{E}\Big[\int_0^{\infty}Z(r)^{\gamma} \mathrm d r\Big] = \mathbb{E}[I],\] 
hence $n^{-\gamma}A^c_n$ converges in distribution to $I$. To get the convergence of all positive moments, it remains to show that $\sup_n \mathbb E[(n^{-\gamma}A^c_n) ^a]<\infty$ for all $a \geq 0$ which is easy to see by using (\ref{eq:AnInt}) together with Hölder's inequality and Lemma \ref{lem:expo}.
\qed

\subsection{Proof of Proposition \ref{prop:melange}: mixing of types}
\label{sec:PropMelange}

It remains to prove Proposition \ref{prop:melange}. We recall that it is assumed that $(Y_n^c,Z_n^c)$ converges almost surely to $(Y',Z')$. The main idea will be to couple the bivariate chain $(X_n^c,J_n^c)$ with a standard $\{1,\ldots,\kappa\}$--valued continuous--time Markov chain with $\mathsf Q$--matrix $Q$, so that, after an appropriate time--change, $J_n^c$ behaves asymptotically as this standard Markov chain.
In order to do so, we first notice in Section \ref{sec:addmixing} that we can do additional assumptions on the model, without loss of generality. Section \ref{sec:LampertiBeta} then introduces Lamperti transform of $(X_n^c,J_n^c)$ in the $n^{\beta}$--time scale. The idea is that in this scale, the type--component resembles asymptotically to the above mentioned  Markov chain with $\mathsf Q$--matrix $Q$. This approximation is studied in Section \ref{sec:coupling} and the end of the proof of Proposition \ref{prop:melange} is given in Section \ref{sec:endmixing}.

\subsubsection{Foreword: a few changes}\label{sec:mix:changements}
\label{sec:addmixing}

As in the critical case, we change the transition probabilities $(p_{n,i}(m,j))$ slightly in a way which does not change the scaling limit. Here, the aim is to make some waiting times we will consider in the following sections, and their moments, finite. First, as already noticed several times, we can assume with no loss of generality that the only absorbing state for the position component is $0$. Then we define, for $m\leq n$ and $i,j\in\{1,\ldots,\kappa\}$, $p'_{n,i}(m,j)$ this way:
\begin{itemize}
\item[$\circ$] $p'_{n,i}(m,j)=p_{n,i}(m,j)$ if $m>2$
\item[$\circ$] $p'_{n,i}(2,j)=p_{n,i}(0,j)+p_{n,i}(1,j)+p_{n,i}(2,j)$
\item[$\circ$] $p'_{2,i}(1,1)=1$
\item[$\circ$] $p'_{1,i}(1,i+1)=1$ for $i\leq\kappa-1$
\item[$\circ$] $p'_{1,\kappa}(0,1)=1$
\item[$\circ$] $p'_{0,1}(0,1)=1$
\end{itemize}
Note that $(p'_{n,i}(m,j))$ then also satisfies $(\mathsf H_{\mathrm{mix}})$ and that proving Proposition \ref{prop:melange} for a Markov chain with transitions $(p'_{n,i}(m,j))$ will also prove it for the general case. As such we will now assume that the $(p_{n,i}(m,j))$ have been replaced by the $(p'_{n,i}(m,j)).$  Hence the following consequences: 

\begin{lem}\label{lem:esperancefinie}
For all $n\geq 2$, there is always at least one change of type before $X_n$ reaches $0$. Moreover this absorption time at 0, denoted by $A_n^c$, has finite positive moments of all orders:
\[\E[(A^c_n)^a]<\infty, \quad \text{ for all }a \geq 0 \text{ and all }n \in \mathbb N.\]
\end{lem}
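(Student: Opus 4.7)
For the first assertion (at least one type change before absorption when $n \geq 2$), I would simply read off the structure of the modified transitions $p'$: by construction $p'_{k,j}(m,\cdot) = 0$ for every $m \in \{0, 1\}$ and every $k \geq 3$, since the original mass on these two points has been pushed up to $m = 2$. Being non-increasing, $X_n$ starting from $n \geq 3$ must therefore visit $2$ before reaching $0$ (and the case $n = 2$ starts there). At that visit the deterministic cascade $(2, j) \to (1, 1) \to (1, 2) \to \cdots \to (1, \kappa) \to (0, 1)$ takes over, and in particular the step $(1, 1) \to (1, 2)$ is unconditionally a type change.

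For the moments of $A^c_n$, my plan is to first bound the discrete absorption time $A_n$ and then to Poissonise. For each fixed $n$ the bivariate chain $(X_n, J_n)$ lives on the finite state space $\{0, \ldots, n\} \times \{1, \ldots, \kappa\}$, and reaches position $0$ almost surely since $X_n$ is non-increasing, $\mathbb Z_+$-valued, and (by the assumption made at the start of Section \ref{secTh2}) its unique absorbing position is $0$. A standard finite-state Markov chain argument then yields constants $K = K(n) \in \N$ and $p = p(n) > 0$ such that $\pr(A_n \leq K) \geq p$ uniformly in the starting state; iterating via the Markov property gives the geometric tail $\pr(A_n > mK) \leq (1-p)^m$, so $\E[A_n^a] < \infty$ for every $a \geq 0$.

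To transfer this to $A^c_n$, I would use that $A^c_n$ is the $A_n$-th jump time of a rate-one Poisson process $\mathcal N$ independent of $(X_n, J_n)$. Conditionally on $\{A_n = k\}$, $A^c_n$ is then Gamma$(k, 1)$-distributed, hence
\[
\E\big[(A^c_n)^a \mid A_n = k\big] = \frac{\Gamma(k + a)}{\Gamma(k)} \leq C(a)(1 + k^a)
\]
for some finite $C(a)$ independent of $k$; a conditional expectation then yields $\E[(A^c_n)^a] \leq C(a)(1 + \E[A_n^a]) < \infty$. I do not expect a real obstacle here; the whole argument reduces to reading off the modified transitions and a routine Poissonisation, and the fact that the constants $K(n), p(n)$ may deteriorate with $n$ is harmless since the claim is only pointwise in $n$.
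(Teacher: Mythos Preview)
Your argument is correct. The first assertion is handled exactly as the paper intends (the paper simply calls it ``obvious by definition of $(p'_{n,i}(m,j))$''), and your explicit unpacking of the deterministic cascade through $(2,\cdot)\to(1,1)\to(1,2)\to\cdots$ is precisely what lies behind that remark.

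For the moment bound the two arguments share the same core idea---finite state space with a unique absorbing point, hence geometric/exponential tails for the absorption time---but execute it differently. The paper works directly with the continuous-time chain $(X_n^c,J_n^c)$: it observes that adding a small transition rate from $(0,1)$ back to the other states makes the chain irreducible on a finite space, and then invokes the standard fact that return times in such chains have finite exponential moments. You instead bound the discrete absorption time $A_n$ by the usual blocks-of-$K$ geometric tail argument and then Poissonise via the Gamma representation $A_n^c\sim\mathrm{Gamma}(A_n,1)$. Both are routine; your route is slightly more explicit and avoids the irreducibilisation trick, while the paper's is marginally shorter and gives exponential moments directly rather than polynomial ones. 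Either way the conclusion is immediate.
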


\begin{proof} The first assertion is obvious by definition of $(p'_{n,i}(m,j)).$ Next, $(X_n^c,J_n^c)$ is a continuous time Markov chain on a finite state space with unique absorbing point $(0,1)$. If we add a small transition rate from $(0,1)$ to all the other states, then it becomes irreducible, at which point standard results imply that the time taken to go from one state to another has some finite exponential moments, and so in particular the time to reach $(0,1)$ has finite $a$-th moment for all $a\geq 0$. Note that the finiteness of these moments was already checked in the proof of Theorem \ref{ThMixingjoint}, under the extra condition that the measures $\mu^{(i)}$ of hypothesis ($\mathsf H_{\mathrm{mix}}$) are all nontrivial.
\end{proof}

{\bf In the following we assume that the conclusions of this lemma are valid, with no loss of generality.}

\subsubsection{Preparation: using the $n^{\beta}$ timescale}
\label{sec:LampertiBeta}

\noindent \textbf{The $n^{\beta}$ scale}. In order to prove Proposition \ref{prop:melange}, we will use another Lamperti--type time--change, this time using the index $\beta$ and the time scale of $n^{\beta}$, which are more appropriate for the study of the types. We let, for $n\in\N$ and $t\geq 0$,
\[Y^{(\beta)}_n(t)=\frac{X^c_n(n^{\beta}t)}{n} \quad \text{and} \quad
\tau_n^{(\beta)}(t) = \inf \bigg\{u, \int_0^u (Y^{(\beta)}_n(r))^{-\beta}\mathrm{d}r >t\bigg\}.
\]
In particular, we have $\mathrm d \tau_n^{(\beta)}(t)=\big(Y^{(\beta)}_n\big(\tau_n^{(\beta)}(t)\big)\big)^{\beta}\mathrm d t$. We then let
\[Z_n^{(\beta)}(t)=Y^{(\beta)}_n(\tau_n^{(\beta)}(t)) \quad \text{and} \quad K^{(\beta)}_n(t)=J^c_n\big(n^{\beta} \tau_n^{(\beta)}(t)\big).\]
Note that, once again by Lemma \ref{lampsko}, the process $\big(Y_n^{(\beta)}(\tau_n^{(\beta)}(n^{\gamma-\beta}t)),t\geq 0\big)$ then converges to the process $\big(Y'(\tau^{(\beta)}(t)),t\geq 0)$, where 
	\[\tau^{(\beta)}(t) = \inf \bigg\{u, \int_0^u (Y'(t))^{-\beta}\mathrm{d}r >t\bigg\},
\]
and the maps $t\to n^{\beta-\gamma}\tau_n^{(\beta)}(n^{\gamma-\beta}t)$ converge uniformly on compact sets to $\tau^{(\beta)}$.
In particular, letting $$S_{n,\veps}=\inf\{t\geq 0:\,Y_n^{(\beta)}(\tau_n^{(\beta)}(n^{\gamma-\beta}t))\leq \veps\},$$ then $S_{n,\veps}$ converges to $S_{\veps}=(\tau^{(\beta)})^{-1}(T_{\veps}(Y'))$. All these convergences are almost sure.

We will later need the following observation. Let $T_0(Z_n^{(\beta})$ denote the absorption time at 0 of $Z_n^{(\beta)}$. For $t<T_0(Z^{(\beta)}_n)$, we have $n^{\beta}\tau_n^{(\beta)}(t)\geq t$, i.e. the time--change speeds time up. Thus $T_0(Z^{(\beta)}_n)\leq A^c_n$, implying by Lemma \ref{lem:esperancefinie} that, for all $a\geq 0$ and all $n \in \mathbb N$,
\begin{equation}\label{eq:momentsfinis}
\E\Big[\big(T_0(Z^{(\beta)}_n\big)^a\Big]<\infty
\end{equation}

\medskip

\noindent \textbf{Mixing in the $n^{\beta}$ scale}. By the upcoming Lemma \ref{lem:mixchangescale}, proving Proposition \ref{prop:melange} can be done by instead proving that the types in $K_n^{(\beta)}$ mix after a time of order $n^{\gamma-\beta}$:
\begin{equation}\label{eq:melangebeta}
\mathbbm{1}_{\{s\leq  S_{n,\veps}\}}\mathbbm{1}_{\{K^{(\beta)}_n(n^{\gamma-\beta}s)=i\}}\mathrm d s \overset{(\pr)}{\underset{n \rightarrow \infty}\longrightarrow} \pi_i \mathbbm{1}_{\{s\leq  S_{\veps}\}} \mathrm{d} s.
\end{equation}

\begin{lem}\label{lem:mixchangescale} For all $n\in\N$, let $a_n$ and $b_n$ be positive random variables, $f_n$ be a random c\`adl\`ag function from $[0,a_n]$ to $\{0,1\}$ (extended to be constantly $0$ after $a_n$), and $F_n$ a random increasing bijection from $[0,a_n]$ to $[0,b_n]$ (extended to be constantly $b_n$ after $a_n$). We call $F_n'$ the right--derivative of $F_n$, which we assume to exist everywhere and be c\`adl\`ag. Assume that, as $n$ tends to infinity:

\begin{itemize}
\item[\emph{(a)}] $a_n$ converges a.s. to $a>0$, $b_n$ converges a.s. to $b>0$.
\item[\emph{(b)}]  $F_n$ converges uniformly a.s. to a continuous function $F$ of which the right--derivative $F'$ exists everywhere and is c\`adl\`ag, and $F'_n$ converges in the Skorokhod sense to $F'$.
\item[\emph{(c)}]  the measure $\mathbbm{1}_{\{x\leq a_n\}} \mathbbm{1}_{\{f_n(x)=1\}}\mathrm d x$ converges weakly in probability to $\lambda \mathbbm{1}_{\{x\leq a\}}\mathrm d x$, for some $\lambda\geq0$.
\end{itemize}

Then we also have the following weak convergence of measures in probability: 

\begin{equation}\label{eq:cvlemme}
\mathbbm{1}_{\{x\leq b_n\}}\mathbbm{1}_{\{f_n(F_n^{-1}(x))=1\}}\mathrm dx \overset{(\pr)}{\underset{n \rightarrow \infty}\longrightarrow}  \lambda \mathbbm{1}_{\{x\leq b\}}\mathrm d x.
\end{equation}
\end{lem}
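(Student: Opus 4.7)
My plan is to pass to a subsequence along which (c) also holds almost surely (possible by the standard subsequence criterion for convergence in probability), apply the change of variables $x=F_n(y)$, and combine the weak convergence of the measure $f_n(y)\,dy$ from (c) with an $L^1$-estimate on $F_n'$ derived from (b). Since weak convergence of finite measures on $[0,\infty)$ is tested against $\phi\in C_c([0,\infty))$, it suffices to establish, on such a subsequence (whose indices I keep denoting by $n$), that
\[
\int_0^{\infty}\phi(x)\, \mathbbm 1_{\{x\leq b_n\}}\mathbbm 1_{\{f_n(F_n^{-1}(x))=1\}}\,dx\ \underset{n\to\infty}\longrightarrow\ \lambda\int_0^b\phi(x)\,dx\qquad \text{a.s.}
\]
for every $\phi\in C_c([0,\infty))$.

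Relative compactness in the Skorokhod topology implies $\sup_{n,\,y\leq a+1}F_n'(y)\leq C<\infty$ for some finite $C$, so each $F_n$ (continuous and monotone with right-derivative bounded by $C$) is $C$-Lipschitz, and so is the uniform limit $F$; in particular both are absolutely continuous. The change of variables $x=F_n(y)$ rewrites the left-hand side as $\int_0^{a_n}\phi(F_n(y))F_n'(y)f_n(y)\,dy$. Setting $\psi_n=\phi(F_n)F_n'\mathbbm 1_{[0,a_n]}$ and $\psi=\phi(F)F'\mathbbm 1_{[0,a]}$, I decompose
\[
\int\psi_n f_n\,dy-\lambda\int\psi\,dy=\int(\psi_n-\psi)f_n\,dy+\Bigl(\int\psi f_n\,dy-\lambda\int\psi\,dy\Bigr).
\]
The second bracket tends to $0$ by (c) and the Portmanteau theorem, since $\psi$ is a bounded Borel function whose set of discontinuities, contained in $\{a\}\cup\{\text{jumps of }F'\}$, is countable and hence Lebesgue-null. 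Since $0\leq f_n\leq 1$, the first bracket is controlled by $\|\psi_n-\psi\|_{L^1(dy)}$.

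The main obstacle is therefore to prove $\|\psi_n-\psi\|_{L^1}\to 0$. Splitting $\psi_n-\psi=(\phi\circ F_n-\phi\circ F)\,F_n'\mathbbm 1_{[0,a_n]}+(\phi\circ F)\bigl(F_n'\mathbbm 1_{[0,a_n]}-F'\mathbbm 1_{[0,a]}\bigr)$, the first summand has $L^1$-norm bounded by $C(a+1)\sup|\phi\circ F_n-\phi\circ F|$, which vanishes by uniform continuity of $\phi$ and the uniform convergence $F_n\to F$. The delicate piece is $\|F_n'\mathbbm 1_{[0,a_n]}-F'\mathbbm 1_{[0,a]}\|_{L^1}$: Skorokhod (not uniform) convergence of $F_n'$ only guarantees pointwise convergence at continuity points of $F'$, but since $F'$ is c\`adl\`ag this occurs at Lebesgue-a.e.\ $y$, and combined with the uniform bound $F_n'\leq C$ on $[0,a+1]$, dominated convergence yields $\int_0^{a+1}|F_n'-F'|\,dy\to 0$, which together with $a_n\to a$ gives the required estimate. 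Finally, the absolute continuity of $F$ together with $F(0)=0$ and $F(a)=b$ (inherited from $F_n(0)=0$, $F_n(a_n)=b_n$, $F_n\to F$ uniformly and $a_n\to a$) allows a last change of variables $x=F(y)$ to give $\lambda\int\psi\,dy=\lambda\int_0^b\phi(x)\,dx$, closing the argument.
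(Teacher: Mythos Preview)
Your proof is correct and follows essentially the same route as the paper: reduce to the almost-sure case via the subsequence criterion, perform the change of variables $x=F_n(y)$, split the integrand into a piece handled by the weak convergence (c) (using that the limit measure is absolutely continuous, so the countably many discontinuities of $\phi(F)F'$ are null) and a piece handled by $L^1$-convergence of $\phi(F_n)F_n'\to\phi(F)F'$. The only cosmetic difference is that the paper packages the $L^1$-convergence step as a separate lemma (Skorokhod convergence implies $L^1$-convergence on compacts), whereas you argue it directly via pointwise convergence at continuity points plus dominated convergence with the uniform bound coming from relative compactness; one small point of care is that your phrasing ``pass to a subsequence'' should really be ``from any given subsequence, extract a further subsequence'', which is what the criterion requires and what the paper spells out.
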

\begin{proof}
Our first step is showing that, {\textit{ if the convergence of (c) is almost--sure, then (\ref{eq:cvlemme}) is also in fact an a.s. convergence}}. In this case we can drop the probabilistic notation and assume everything is deterministic. Let $g$ be any continuous and bounded function on $\R_+$, we have
\begin{align*}
\int_{0}^{b_n} g(x) &\mathbbm{1}_{\{f_n(F_n^{-1}(x))=1\}}\mathrm dx = \int_0^{a_n} g(F_n(x))\mathbbm{1}_{\{f_n(x)=1\}}F_n'(x)\mathrm d x \\
                 &=  \int_0^{a_n} \Big(g(F_n(x))F_n'(x)-g(F(x))F'(x)\Big)\mathbbm{1}_{\{f_n(x)=1\}}\mathrm d x +\int_0^{a_n}g(F(x))F'(x)\mathbbm{1}_{\{f_n(x)=1\}}\mathrm d x.
\end{align*}
The first term tends to $0$ because the Skorokhod convergence of $g(F_n(x))F_n'(x)$ to $g(F(x))F'(x)$ implies $L_1$ convergence, see Lemma \ref{lem:skotoL}. The second term converges to $\lambda \int_0^{a} g(F(x))F'(x)\mathrm d x=\lambda \int_{0}^{b} g(x) \mathrm d x$. We can use the convergence (c) despite $F'$ being c\`adl\`ag and not necessarily continuous, because we are only using absolutely continuous measures. 

For the general case, we use Lemma \ref{lem:extraction}. Thus we take a subsequence of $\mathbbm{1}_{\{x\leq b_n\}}\mathbbm{1}_{\{f_n(F_n^{-1}(x))=1\}}\mathrm dx$, and we look to extract a sub--subsequence which converges a.s. to $\lambda \mathbbm{1}_{\{x\leq b\}}\mathrm d x$. This is immediate: we just extract a subsequence such that (c) is a.s., and we are then back to the deterministic case, ending the proof.
\end{proof}

To be precise, Proposition \ref{prop:melange} follows from (\ref{eq:melangebeta}) and Lemma \ref{lem:mixchangescale} by taking $a_n=S_{n,\veps}$, $a=S_{\veps}$, $f_n(t)=\mathbbm{1}_{\{K^{(\beta)}_n(n^{\gamma-\beta}t)=i\}}$, $F_n(t)=(\tau_n^c)^{-1}\big(n^{\beta-\gamma}(\tau_n^{(\beta)})(n^{\gamma-\beta}t)\big)$ and $F=\tau^{-1}\circ\tau^{(\beta)}.$ Note that  $(\tau_n^c)^{-1}$ converges uniformly to $\tau^{-1}$ on $[0,T_{\veps}(Y')]$, by a similar argument to the proof of the uniform convergence of $g_n$ in Lemma \ref{lem:passagecontinu}.

Finally, in order to prove (\ref{eq:melangebeta}), we can use Lemma \ref{lem:interversion} and restrict ourselves to showing the convergence of the masses assigned to intervals of the form $[0,t]$ with $t>0$. Thus we want to prove this convergence in probability:
\[
\int_{0}^{t\wedge S_{n,\veps}} \mathbbm{1}_{\{K^{(\beta)}_n(n^{\gamma-\beta}s)=i\}}\mathrm d s \overset{(\pr)}{\underset{n \rightarrow \infty}\longrightarrow}  (t\wedge S_{\veps}) \pi_i , 
\]
which can then be written in a more concise way by including all the types:
\begin{equation}\label{simplemelange}
\int_{0}^{t\wedge S_{n,\veps}} \delta_{K^{(\beta)}_n(n^{\gamma-\beta}s)}\mathrm d s \overset{(\pr)}{\underset{n \rightarrow \infty}\longrightarrow}   (t\wedge S_{\veps}) \pi. 
\end{equation}
Our aim is now to prove (\ref{simplemelange}).

\subsubsection{A special coupling}
\label{sec:coupling}

Recall that, conditionally on $X_n^c(t)=k$, the infinitesimal jump rates of $J_n^c$ just after time $t\geq 0$ are given by the matrix $P_k$. Letting $Q_k=P_k-I$ be the corresponding $\mathsf Q$--matrix, we have by assumption 
\[Q_k=k^{-\beta}Q + o(k^{-\beta}).\]
Consider what happens when we use $\tau^{(\beta)}_n$. Given that $\mathrm d \tau_n^{(\beta)}(t)=(Z^{(\beta)}_n(t))^{\beta}\mathrm d t$, we have that, conditionally on $X_n(n^{\beta}\tau^{(\beta)}_n(t))=k$, the jump rates of $K_n^{(\beta)}$ are given by the $\mathsf Q$--matrix $k^{\beta}Q_k$, which is close to $Q$. By using a coupling argument, we will show that $K_n^{(\beta)}$ is close enough to a continuous time Markov chain with $\mathsf Q$--matrix $Q$, and equation (\ref{simplemelange}) will follow from the ergodic theorem. Specifically, let $(L(t),t\geq 0)$ be a Markov chain in continuous time with $\mathsf Q$--matrix $Q$, the following almost--sure limit is classical:
\[\frac{1}{t}\int_{0}^{t} \delta_{L(s)} \mathrm d s \underset{t\to\infty}\longrightarrow \pi\]
and it follows that, since $S_{n,\veps}$ converges a.s. to $S_{\veps}$,

\begin{equation}\label{eq:limiteL}\int_0^{t\wedge S_{n,\veps}} \delta_{L(n^{\gamma-\beta}s)} \mathrm d s \underset{n\to\infty}{\overset{\mathrm{a.s.}}\longrightarrow} (t\wedge S_{\veps})\pi.
\end{equation}
We will now build a coupling of all the $\big(K_n^{(\beta)}(s),s\leq n^{\gamma-\beta}(t\wedge S_{n,\veps})\big)$ with $L$ such that
$\int_0^{t\wedge S_{n,\veps}}  \delta_{K^{(\beta)}_n(n^{\gamma-\beta}s)}\mathrm d s$ is close enough to $\int_0^{t\wedge S_{n,\veps}}  \delta_{L(n^{\gamma-\beta}s)}\mathrm d s.$

\bigskip

\noindent \textbf{Comparison of the first jumps.} For all $n\in\N$, let $$\eta_n=\underset{k\geq n}\sup |k^{\beta}Q_k -Q|,$$ where $|.|$ denotes the supremum norm of a matrix.

\begin{lem}\label{lem:closetoL}
Let $i\in\{1,\ldots,\kappa\}$ be the initial type and $\sigma_1(K_n^{(\beta)})$ denote the first jump time of $K_n^{(\beta)}$ \emph(with the convention that this time is infinite when there is no jump\emph). Then:
\begin{itemize}
\item[$\mathrm{(i)}$] $\sigma_1(K_n^{(\beta)})$ converges in distribution to an exponential time with parameter $|q_{i,i}|$, and there is convergence of all the positive moments, that is, for $a>0$,
\[\E\big[\big(\sigma_1(K_n^{(\beta)})\big)^a\big] \underset{n \rightarrow \infty}\longrightarrow \frac{\Gamma(a+1)}{|q_{i,i}|^a}, \]
where $\Gamma$ denotes the standard Gamma function.
\item[$\mathrm{(ii)}$] $K_n^{(\beta)}\big(\sigma_1(K_n^{(\beta)})\big)$ converges in distribution to the first jump of $L$, that is $\pr\big(K_n^{(\beta)}(\sigma_1(K_n^{(\beta)}))=j\big)$ converges to $\frac{q(i,j)}{|q(i,i)|}$ for all $j\neq i$.
\end{itemize}
\end{lem}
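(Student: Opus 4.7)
The plan is to express the first jump time $\sigma_1 := \sigma_1(K_n^{(\beta)})$ via its conditional intensity and then pass to the limit. Since the chain $(X_n^c, J_n^c)$ can be built by independent thinning of a rate-$1$ Poisson clock at each state (the ``type--change'' stream firing at rate $|Q_k(i,i)|$ and the ``position--only'' stream firing at rate $P_k(i,i)$ given state $(k,i)$), before $\sigma_1$ the position $X_n^c$ evolves as a Markov chain on positions with rates $p_{k,i}(\cdot, i)$ in real time, independently of the type--change stream. The Lamperti--type time--change $\tau_n^{(\beta)}$, which dilates time by a factor $(Y_n^{(\beta)})^{\beta}$, and the scaling by $n^\beta$ combine to multiply real--time rates by $k^\beta$ (with $k$ the current position); this gives the intensity
\[R_n(u) := k(u)^\beta \, \big|Q_{k(u)}(i,i)\big|, \qquad k(u) := X_n^c\big(n^\beta \tau_n^{(\beta)}(u)\big),\]
and hence the Laplace formula $\pr(\sigma_1 > t) = \E\big[\exp\big(-\int_0^t R_n(u)\,\d u\big)\big].$

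Next I would prove that $R_n(u) \to |q_{i,i}|$ in probability, for each $u$ in a bounded interval. Since $\beta < \gamma$, the position barely moves on the $n^\beta$ time--scale: Lemma \ref{lemma:mix:scale}(i) applied with $T_n = n^\beta u$ (which satisfies $T_n/n^\gamma \to 0$) gives $k(u)/n \to 1$ in probability. Combined with $\eta_n = \sup_{k \geq n}|k^\beta Q_k - Q| \to 0$ this yields $R_n(u) \to |q_{i,i}|$, and dominated convergence then gives $\pr(\sigma_1 > t) \to \e^{-|q_{i,i}| t}$, which is (i) for the law.

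For the moment convergence of $\sigma_1$, the key point is a uniform exponential tail. By irreducibility of $Q$, $|q_{i,i}| > 0$, and the modifications of Section \ref{sec:mix:changements} (in particular $p'_{1,i}(1, i+1) = 1$, which forces a type change at position $1$) guarantee that $X_n^c$ stays $\geq 1$ before $\sigma_1$ and that the rate $R_n(u)$ admits a uniform lower bound $c > 0$: for $k$ of order $n$ this follows from the uniform convergence $k^\beta Q_k \to Q$, while for bounded $k$ it follows from the explicit deterministic transitions of the modification. Consequently $\pr(\sigma_1 > t) \leq \e^{-ct}$ uniformly in $n$ (for $n$ large), yielding uniform integrability of every positive power of $\sigma_1$ and hence the announced moment convergence.

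Finally, assertion (ii) follows by the same framework: conditionally on $X_n^c\big(n^\beta\tau_n^{(\beta)}(\sigma_1^-)\big) = k$, the jumped--to type has distribution $Q_k(i,\cdot)/|Q_k(i,i)|$, and since $\sigma_1$ is tight one still has $k \to \infty$ in probability on $\{\sigma_1 \leq t\}$, so the ratio converges to $q(i,\cdot)/|q(i,i)|$. I expect the main technical obstacle to lie in the careful justification of the thinning decomposition and of the Laplace identity in this multi--type setting (so as to isolate the type--change stream from the position evolution), and then in controlling the joint law of $(\sigma_1, k(\sigma_1^-))$, rather than just $\sigma_1$ alone, in order to pass to the limit in (ii) via the same dominated convergence argument used for (i).
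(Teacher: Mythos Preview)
Your intensity/survival formula approach is genuinely different from the paper's, and for the convergence in distribution in (i) it is essentially sound. One caveat: the Laplace identity $\pr(\sigma_1>t)=\E\big[\exp\big(-\int_0^t R_n(u)\,\d u\big)\big]$ must be read with the expectation taken over the auxiliary position process $\hat X$ that uses only the type-$i$ transitions (together with its own $\beta$-time change), not over $X_n^c$; the two agree only on $\{\sigma_1>t\}$. Lemma~\ref{lemma:mix:scale}(i) is stated for the full chain, so one also owes a sentence explaining why the analogue holds for $\hat X$ (which it does, by the same tightness argument). The paper instead argues at the level of discrete jumps: conditionally on $Z_n^{(\beta)}(\sigma_1^-)=x$, it sandwiches $\sigma_1$ between two geometric sums of i.i.d.\ exponentials and computes moment generating functions, then removes the conditioning using $Z_n^{(\beta)}(\sigma_1^-)\to1$. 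For (ii) the two arguments are essentially the same.

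The real gap is in your uniform tail bound for the moments. The claim that $R_n(u)\geq c>0$ uniformly is not justified: for $k\geq n_0$ you do get $k^{\beta}|Q_k(i,i)|\geq|q_{i,i}|/2$, and the modifications of Section~\ref{sec:mix:changements} handle $k\in\{1,2\}$, but for the finitely many intermediate positions $k\in\{3,\dots,n_0-1\}$ nothing in $(\mathsf H_{\mathrm{mix}})$ prevents $P_k(i,i)=1$, i.e.\ $R_n=0$ there. One can still rescue uniform integrability by arguing that the $Z_n^{(\beta)}$-time the process spends at those finitely many states is dominated by a random variable with moments independent of $n$ (finite state space with no absorption above $0$), but this is a different argument from the one you wrote. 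The paper bypasses the issue entirely: it decomposes $\sigma_1=E_n+\mathbbm 1_{\{J_n(1)=i\}}\sigma_1(K'_{X_n(1)})$ and shows by induction on $k$ that $w_k=\sup_n\E[\sigma_1^k]<\infty$, using only the bound $n^{\beta}(1-P_n(i,i))\geq|q_{i,i}|/2$ for $n>n_0$ together with the finiteness of moments for each fixed $n$ coming from \eqref{eq:momentsfinis}.
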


\begin{proof}
At the heart of both proofs lies the fact that, by (iii) and (i) of Lemma \ref{lemma:mix:scale}, $Z_n^{(\beta)}(\sigma_1(K_n^{(\beta)}))$ converges to $1$ almost surely. 

\noindent $\bullet$ This proves point (ii) almost immediately: notice that, conditionally on $Z_n^{(\beta)}\big(\sigma_1((K_n^{(\beta)})^-)\big)=x>0$, the distribution of the jump is then given by $\frac{P_{nx}(i,j)}{1-P_{nx}(i,i)}$ for $j\neq i$, which is seen by $(\mathsf H_{\mathrm{mix}})$ (ii) to converge a.s. to $\frac{q(i,j)}{|q(i,i)|}$. To remove the conditioning, note that, if we take any $y\in (0,1)$, we have
\begin{align*}
\pr \Big(K_n^{(\beta)}\big(\sigma_1(K_n^{(\beta)})\big)&=j\Big)  =\pr\Big(K_n^{(\beta)}\big(\sigma_1(K_n^{(\beta)})\big)=j,Z_n^{(\beta)}(\sigma_1((K_n^{(\beta)})^-))< y\Big)\\ &+ \sum_{x\geq y,nx\in\Z_+} \pr\left(Z_n^{(\beta)}\big(\sigma_1((K_n^{(\beta)})^-)\big)=x\right)\frac{P_{nx}(i,j)}{1-P_{nx}(i,i)}.
\end{align*}
The first term of the right--hand side tends to $0$, while for the second term, the uniform convergence of the various $\frac{P_{nx}(i,j)}{1-P_{nx}(i,i)}$ to $\frac{q(i,j)}{|q(i,i)|}$ as $n$ tends to infinity for $x\geq y$ gives us the wanted conclusion.

\noindent $\bullet$ For the convergence in distribution stated in (i), we use the structure of our time--changed process. When the position component is at $x>0$, then the waiting time until the next jump of the position component is an exponential variable with parameter $(nx)^{\beta}$, and this jump has probability $1-P_{nx}(i,i)$ of inducing a change of type. Thus, still conditionally on $Z_n^{(\beta)}\big(\sigma_1((K_n^{(\beta)})^-)\big)=x$, we can write the following stochastic domination:
\[\sum_{i=1}^{G} \mathcal{E}_i \preceq \sigma_1(K_n^{(\beta)})\preceq \sum_{i=1}^{G'} \mathcal{E}'_i,\]
where $\preceq$ indicates stochastic domination, the $(\mathcal{E}_i)$ (resp. $(\mathcal{E}'_i)$)  form an i.i.d. sequence of exponential variables with parameter $n^{\beta}$ (resp  $(nx)^{\beta}$ ), $G$ (resp $G'$) is an independent geometric variable with parameter $(nx)^{-\beta}(|q(i,i)|+\eta_{nx})$ (resp $n^{-\beta}(|q(i,i)|-\eta_{nx})$). One readily checks that both the upper and lower bound have the appropriate convergence in distribution. We show it for the lower bound, using the moment generating function, and leave the upper bound to the reader: for $t>0$, we have
\begin{align*}
\mathbb{E}\left[\exp\left(-t\sum_{i=1}^{G} \mathcal{E}_i\right)\right]&=\mathbb{E}\left[\left(1+tn^{-\beta}\right)^{-G}\right] \\
                              &= \frac{(nx)^{-\beta}\big(|q(i,i)|+\eta_{nx}\big)\left(1+tn^{-\beta}\right)^{-1}}{1-\left(1-(nx)^{-\beta}(|q(i,i)|+\eta_{nx})\right)\left(1+tn^{-\beta}\right)^{-1}}.
\end{align*}
As $(n,x)$ tends to $(\infty,1),$ this converges to $\frac{|q(i,i)|}{|q(i,i)|+t}$, which is the moment generating function of the wanted exponential distribution. The same argument by uniform convergence as in the proof of (ii) shows then that we can remove the conditioning, and $\mathbb{E}[\e^{-t\sigma_1(K_n^{(\beta)})}]$ also converges to $\frac{|q(i,i)|}{|q(i,i)|+t}$.

\noindent $\bullet$ To deduce from this convergence in distribution the convergence of all positive moments, we will prove that, for all $k\in\N,$ $\E[\big(\sigma_1(K_n^{(\beta)})\big)^k]$ is uniformly bounded in $n$. This is enough to conclude since the r.v. $\big(\sigma_1(K_n^{(\beta)})\big)^a$ are then uniformly integrable for all $a>0$. For $n\in\N$ and $k\in\Z_+$, let 

\[u_{n,k}=\E[\sigma_1(K_n^{(\beta)})^k],\quad v_{n,k}=\sup_{\substack{m\leq n \\ l\leq k}} u_{m,l},\quad \text{and}\quad w_k=\underset{n\in\N}\sup\, v_{n,k}.\] Note that $u_{n,k}$ is finite for all $n$ and $k$, by (\ref{eq:momentsfinis}), and thus $v_{n,k}$ also is. Our aim is to show that $w_k$ is finite for all $k$.
To that purpose, let $n_0$ be large enough such that, for $n\geq n_0$, we have $n^{\beta}(1-P_n(i,i))\geq |q_{i,i}|/2$. We will prove that, for all $k\in\N$, 
\begin{equation}\label{eq:bornemoments}
w_k\leq v_{n_0,k}\vee \frac{2w_{k-1}k!e}{|q_{i,i}|}.
\end{equation}
Since $w_0=1$, an induction then finishes the proof.

Let therefore $k\in\N$, we prove equation (\ref{eq:bornemoments}) by showing that, for $n>n_0$, $v_{n,k}\leq v_{n-1,k}\vee \frac{2w_{k-1}k!e}{|q_{i,i}|}$. Assume that $v_{n,k}> v_{n-1,k}$ (otherwise there is nothing to do), implying $v_{n,k}=u_{n,k}$. Use the structure of the process to write
\[\sigma_1(K_n^{(\beta)})=E_n+\mathbbm{1}_{\{J_n(1)=i\}}\sigma_1(K'_{X_n(1)})\]
where $E_n$ is an independent exponential variable with parameter $n^{\beta}$ and given $X_n(1)=\ell$, $K'_{X_n(1)}$ is independent of $(E_n,J_n(1))$ and distributed as $K^{(\beta)}_{\ell}$. We can then bound the $k$-th moment thus:

\[u_{n,k}\leq \frac{k!}{n^{k\beta}}+P_n(i,i)\left(\sum_{l=1}^{k-1}{{k}\choose{l}}\frac{l!}{n^{l\beta}}v_{n,k-l} +v_{n,k}\right).\]
Bounding all instances of $v_{n,k-l}$ by $w_{k-1}$, and $n^{-l}$ by $n^{-1}$, we get 
\[v_{n,k}\leq P_n(i,i)v_{n,k}+\frac{1}{n^{\beta}}\left(k!+w_{k-1}\sum_{l=1}^{k-1}{{k}\choose{l}}l! \right).\]
It follows that
\begin{align*}
v_{n,k}(1-P_n(i,i))n^{\beta}&\leq k!\Big(1+w_{k-1}\sum_{l=1}^{k-1}\frac{1}{(k-l)!}\Big) \\
                                        &\leq k!w_{k-1}e.
\end{align*}
Since $n>n_0$, we have $(1-P_n(i,i))n^{\beta}\geq \frac{|q_{i,i}|}{2}$, and thus $v_{n,k}\leq  \frac{2w_{k-1}k!e}{|q_{i,i}|},$ ending the proof.
\end{proof}

Standard coupling results then imply that there exists a deterministic non--increasing sequence $(\rho_n)_{n\in\N}$ which converges to $0$ and such that we can couple $(X^c_n,J^c_n)$ with $L$ in such a way that, calling  $\sigma_1(L)$ the first jump time of $L$, we have
\[\sigma_1(K^{(\beta)}_n)\overset{\mathrm{a.s.}}{\underset{n\to\infty}\longrightarrow} \sigma_1(L) \]
and
\[\pr\big(K^{(\beta)}_n(\sigma_1(K^{(\beta)}_n))\neq L(\sigma_1(L))\big)\leq \rho_n, \forall n\in\N\]
for any initial type $i$. Note that the a.s. convergence is in fact also an $L_1$ convergence, by a standard variation of Scheff\'e's lemma: separating the positive and negative parts, $(\sigma_1(L)-\sigma_1(K^{(\beta)}_n))_+$ is nonnegative and dominated by $\sigma_1(L)$ and thus its expectation converges to $0$, and then we write the negative part as $\E[(\sigma_1(L)-\sigma_1(K^{(\beta)}_n))_-]=\E[(\sigma_1(L)-\sigma_1(K^{(\beta)}_n))_+]-\E[\sigma_1(L)-\sigma_1(K^{(\beta)}_n)]$ and see that its limit is also $0$. Thus, up to changing our sequence $(\rho_n)_{n\in\N}$, we now also have
\[\mathbb{E}\big[|\sigma_1(K^{(\beta)}_n)-\sigma_1(L)|\big]\leq \rho_n.\]

\bigskip

\noindent \textbf{Next jumps.} The processes $K^{(\beta)}_n$ and $L$ are now in a sense coupled until their first respective jumps. To continue the coupling, if they make the same first jump then we continue as above, and if they don't, we have to make them equal again, which we do by running $L$ for some more time until it reaches the same value as $K^{(\beta)}_n$. Let us formalise this. Let $(\sigma_i(K^{(\beta)}_n),i\in\Z_+)$ and $(\sigma_i(L),i\in\Z_+)$ be the lists of jump times of $K^{(\beta)}_n$ (these jump times are infinite by convention after the last jump) and $L$, with the extra convention that $\sigma_0(K^{(\beta)}_n)=\sigma_0(L)=0$. We also let $W_i(K_n^{(\beta)})=\sigma_i(K^{(\beta)}_n)-\sigma_{i-1}(K^{(\beta)}_n)$ be the $i$-th waiting time of $K_n^{(\beta)}$ for $i\in\Z_+$, and $W_i(L)$ be the same for $L$. We build an auxiliary process $L'$, its jump times $(\sigma_i(L'),i\in\Z_+)$ and waiting times $(W_i(L'),i\in\Z_+)$ and an increasing sequence of random integers $(i_k,k\in\Z_+),$ such that $L'$ has the same list of jumps as $K_n^{(\beta)}$, and $W_{k+1}(L')=W_{i_k+1}(L)$ for all $k\in \Z_+$. We do it by induction:
\begin{itemize}
\item $i_0=0$, $L'(0)=L(0)=K_n^{(\beta)}(0)$ and $\sigma_0(L')=0$;
\item for all $k\geq 0$, knowing $i_k$ and $\sigma_{k}(L')$, let
\[\sigma_{k+1}(L')=\sigma_{k}(L')+W_{i_k+1}(L),\]
with $L'(t)=K_n^{(\beta)}\big(\sigma_k(K_n^{(\beta)})\big)$ for $t\in[\sigma_{k}(L'), \sigma_{k+1}(L')),$ and let
\[i_{k+1}=\inf\Big\{i\geq i_k+1:\;L(\sigma_i(L))=K^{(\beta)}_n(\sigma_{k+1}(K^{(\beta)}_n))\Big\}.\]
\end{itemize}
This defines $L'$ uniquely. Now, let $\mathcal{A}_n(k)$ be the sigma--field generated by the $\sigma_i(L'),\sigma_i(K_n^{(\beta)})$ for $i\leq k$, the values of $L'$ and $K_n^{(\beta)}$ at these respective jump times, as well as the $X_n^c(n^{\beta}\tau_n^{(\beta)}(\sigma_i(K_n^{(\beta)})),$ $i \leq k$. 
By repeating the previous coupling at each jump, the processes $L,L',X_n^c,J_n^c$ can be built such that
\begin{equation}\label{bigcoupling}
\E\Big[\big|W_{k+1}(K^{(\beta)}_n)-W_{k+1}(L')\big|\mid \mathcal{A}_n(k),X_n^c\big(n^{\beta}\tau_n^{(\beta)}(\sigma_k(K_n^{(\beta)}))\big)\geq n\veps\Big]\leq  \rho_{\lfloor n\veps\rfloor}
\end{equation}
\begin{equation}
\label{ik}
\pr\big(i_{k+1}\neq i_k+1\mid \mathcal{A}_n(k),X_n^c\big(n^{\beta}\tau_n^{(\beta)}(\sigma_k(K_n^{(\beta)}))\big)\geq n\veps\big)\leq \rho_{\lfloor n\veps\rfloor}.
\end{equation}
In this coupling we can, and will, moreover assume that the jump times of $L$ that are not involved in $L'$ are independent of $L',X_n^c,J_n^c$.

From now on, let $t>0$ be a fixed time. We run the coupling until $k$ reaches the value $k_{\rm{max}}(n)$ defined by
\[k_{\rm{max}}(n)=\inf\Big\{k\in\N:\; \sigma_k(K_n^{(\beta)}) > n^{\gamma-\beta} t \text{ or } X_n^{c}\big(n^{\beta}\tau_n^{(\beta)}(\sigma_k(K_n^{(\beta)}))\big)<n\veps\Big\}.\]
We will need a few properties concerning $k_{\rm{max}}(n)$ and $\sigma_{k_{\rm{max}}(n)}(K_n^{\beta}).$

\begin{lem}\label{lem:kmax}
We have the following:
\begin{itemize}
\item[$\mathrm{(i)}$] For all $\delta>0$, there exists $C_{\delta}$ such that $\E\big[W_{k_{\rm{max}}(n)}(K_n^{(\beta)})\big]\leq C_{\delta}n^{\delta}$ for all $n$ large enough.
\item[$\mathrm{(ii)}$] There exists $C>0$ such that $\mathbb{E}[k_{\rm{max}}(n)]\leq Cn^{\gamma-\beta}$ for all $n$ large enough.
\item[$\mathrm{(iii)}$] $n^{\beta-\gamma}\sigma_{k_{\rm{max}}(n)}(K_n^{\beta}) - t\wedge S_{n,\veps}$ tends to $0$ in $L_1$, as does $n^{\beta-\gamma}\sigma_{k_{\rm{max}}(n)}(L') - t\wedge S_{n,\veps}.$
\end{itemize}
\end{lem}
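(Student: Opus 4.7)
The plan is to leverage the coupling bounds $(\ref{bigcoupling})$ and $(\ref{ik})$ together with two key features of $L'$: its waiting times are, conditionally on the current type, exponentials with rate in the bounded range $[\min_i|q_{i,i}|,\max_i|q_{i,i}|]$, and its increments are close in conditional $L_1$ expectation to those of $K_n^{(\beta)}$ as long as the position is at least $n\veps$. The essential structural point is that $k_{\rm{max}}(n)$ is a stopping time for the filtration $(\mathcal{A}_n(k))_k$ (because the event $\{k_{\rm{max}}(n)\leq k+1\}$ is measurable with respect to data that is recorded in $\mathcal{A}_n(k)$), and the very definition of $k_{\rm{max}}(n)$ forces $X_n^c(n^\beta\tau_n^{(\beta)}(\sigma_{k_{\rm{max}}(n)-1}(K_n^{(\beta)})))\geq n\veps$, which is exactly what is needed to apply the coupling bounds at the random index $k_{\rm{max}}(n)-1$.

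For (i), conditioning on $\mathcal{A}_n(k_{\rm{max}}(n)-1)$ and applying $(\ref{bigcoupling})$ yields $\E[W_{k_{\rm{max}}(n)}(K_n^{(\beta)})]\leq \E[W_{k_{\rm{max}}(n)}(L')]+\rho_{\lfloor n\veps\rfloor}$; the right--hand side is $O(1)$ because $W_{k_{\rm{max}}(n)}(L')$ is conditionally exponential with rate bounded below, which is in fact stronger than the claimed $O(n^\delta)$. For (ii), the bound $\sigma_{k_{\rm{max}}(n)-1}(K_n^{(\beta)})\leq n^{\gamma-\beta}t$ combined with a Wald--type identity gives
\[
n^{\gamma-\beta}t \geq \E\Big[\sum_{k=1}^{k_{\rm{max}}(n)-1} W_k(K_n^{(\beta)})\Big] = \sum_{k\geq1}\E\big[\mathbbm{1}_{\{k\leq k_{\rm{max}}(n)-1\}}\,\E[W_k(K_n^{(\beta)})\mid \mathcal{A}_n(k-1)]\big].
\]
On the $\mathcal{A}_n(k-1)$--measurable event $\{k\leq k_{\rm{max}}(n)-1\}$ the position at $\sigma_{k-1}$ exceeds $n\veps$, so the coupling yields $\E[W_k(K_n^{(\beta)})\mid\mathcal{A}_n(k-1)]\geq(\max_i|q_{i,i}|)^{-1}-\rho_{\lfloor n\veps\rfloor}\geq c>0$ for $n$ large enough, and the resulting inequality $c\,\E[k_{\rm{max}}(n)-1]\leq n^{\gamma-\beta}t$ delivers (ii).

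For (iii), I would first establish the sandwich
\[
\sigma_{k_{\rm{max}}(n)-1}(K_n^{(\beta)})\leq n^{\gamma-\beta}(t\wedge S_{n,\veps})\leq \sigma_{k_{\rm{max}}(n)}(K_n^{(\beta)}),
\]
obtained by splitting according to which of the two criteria defining $k_{\rm{max}}(n)$ is triggered, and noting that the other criterion was not yet met at step $k_{\rm{max}}(n)-1$ (the inequality $Z_n^{(\beta)}(\sigma_{k_{\rm{max}}(n)-1})\geq\veps$ forces $\sigma_{k_{\rm{max}}(n)-1}<n^{\gamma-\beta}S_{n,\veps}$, while $\sigma_{k_{\rm{max}}(n)-1}\leq n^{\gamma-\beta}t$ is part of the definition). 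It follows that $|n^{\beta-\gamma}\sigma_{k_{\rm{max}}(n)}(K_n^{(\beta)})-(t\wedge S_{n,\veps})|\leq n^{\beta-\gamma}W_{k_{\rm{max}}(n)}(K_n^{(\beta)})$, which vanishes in $L_1$ by (i). The companion statement for $L'$ is obtained by adding the cumulative coupling error $\sum_{k=1}^{k_{\rm{max}}(n)}|W_k(L')-W_k(K_n^{(\beta)})|$; its expectation is at most $\rho_{\lfloor n\veps\rfloor}\,\E[k_{\rm{max}}(n)]\leq C\rho_{\lfloor n\veps\rfloor}n^{\gamma-\beta}$ by $(\ref{bigcoupling})$ and (ii), so after scaling by $n^{\beta-\gamma}$ it tends to zero.

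The main potential difficulty is not in any single estimate but in the sandwich inequality in (iii): since $k_{\rm{max}}(n)$ can be triggered by either of two criteria and the chain $Z_n^{(\beta)}$ is merely c\`adl\`ag, one must verify with some care in each case that $n^{\gamma-\beta}(t\wedge S_{n,\veps})$ actually lies between $\sigma_{k_{\rm{max}}(n)-1}$ and $\sigma_{k_{\rm{max}}(n)}$. Once this is settled, (i) absorbs the residual last waiting--time contribution and (ii) absorbs the accumulated coupling discrepancy for $L'$.
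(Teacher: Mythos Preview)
Your arguments for (ii) and (iii) are essentially the paper's, and they are fine once (i) is in hand. The gap is in (i).

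\medskip

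The stopping time $k_{\rm{max}}(n)$ is \emph{not} predictable: the event $\{k_{\rm{max}}(n)\leq k+1\}$ is \emph{not} $\mathcal{A}_n(k)$--measurable (only $\{k_{\rm{max}}(n)\leq k\}$ is), because the triggering criterion $\sigma_{k+1}(K_n^{(\beta)})>n^{\gamma-\beta}t$ depends on $W_{k+1}(K_n^{(\beta)})$ itself. Consequently $k_{\rm{max}}(n)-1$ is not a stopping time, and you cannot ``condition on $\mathcal{A}_n(k_{\rm{max}}(n)-1)$ and apply $(\ref{bigcoupling})$'' as if the next waiting time were a fresh one: the very event $\{k_{\rm{max}}(n)=k+1\}$ is positively correlated with large values of $W_{k+1}(K_n^{(\beta)})$ (an overshoot/inspection--paradox effect). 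The same objection kills the claim that $\E[W_{k_{\rm{max}}(n)}(L')]=O(1)$: in the coupling, $W_{k+1}(L')$ and $W_{k+1}(K_n^{(\beta)})$ are built to be close, hence dependent, so the size--bias on $W_{k_{\rm{max}}(n)}(K_n^{(\beta)})$ is inherited by $W_{k_{\rm{max}}(n)}(L')$; the conditional--exponential structure of $L'$ does not survive evaluation at the random, non--predictable index $k_{\rm{max}}(n)$.

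\medskip

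What does work (and what the paper does) is to replace the indicator $\mathbbm{1}_{\{k_{\rm{max}}(n)=k\}}$ by the coarser $\mathbbm{1}_{\{k\leq k_{\rm{max}}(n)\}}$, which \emph{is} $\mathcal{A}_n(k-1)$--measurable. Using the uniform moment bounds of Lemma~\ref{lem:closetoL} (valid conditionally on $\{k\leq k_{\rm{max}}(n)\}$ since the position at $\sigma_{k-1}$ is then $\geq n\veps$), one gets
\[
\E\big[(W_{k_{\rm{max}}(n)}(K_n^{(\beta)}))^b\big]\ \leq\ \sum_{k\geq 1}\E\big[(W_k)^b\mathbbm{1}_{\{k\leq k_{\rm{max}}(n)\}}\big]\ \leq\ c_b\,\E[k_{\rm{max}}(n)].
\]
Combining this with your own Wald inequality (which gives $a\,\E[k_{\rm{max}}(n)]\leq tn^{\gamma-\beta}+\E[W_{k_{\rm{max}}(n)}(K_n^{(\beta)})]$, a ``weak'' form of (ii)) and a Markov--type bootstrap
\[
\E[W_{k_{\rm{max}}(n)}]\leq n^{\delta}+n^{-(b-1)\delta}\E\big[(W_{k_{\rm{max}}(n)})^b\big]
\]
closes the loop and yields (i). The remainder of your proof then goes through.
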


\begin{proof}
Before proving (i), we first establish a weaker version of (ii). Note that, by (\ref{bigcoupling}), there exists $a>0$ such that, for $n\in\N$ large enough and all $k\in\N$
\[\E\Big[W_k(K^{(\beta)}_n)\mid \mathcal{A}_n(k-1),k\leq k_{\rm{max}}(n)\Big]\geq a.\]
As such, by Wald's formula (Lemma \ref{wald}), we have
\begin{align*}
a\E[k_{\rm{max}}(n)]&\leq \E\Bigg[\sum_{k=1}^{k_{\rm{max}}(n)}W_k(K_n^{(\beta)})\Bigg]
= \E[\sigma_{k_{\rm{max}}(n)}(K_n^{(\beta)})]
=\E[\sigma_{k_{\rm{max}}(n)-1}(K_n^{(\beta)})]+\E[W_{k_{\rm{max}}(n)}(K_n^{(\beta)})]\\
&\leq tn^{\gamma-\beta}+ \E[W_{k_{\rm{max}}(n)}(K_n^{(\beta)})]. \numberthis \label{eq:debutmajorationkmax}
\end{align*}
This will be needed in the proof of (i).

\noindent $\bullet$ Point (i) takes more work. As a first step, let us first show that, for all $b>0$, there exists $c_b>0$ such that
\begin{equation}\label{eq:kmaxjumpbthpower}
\E\left[\big(W_{k_{\rm{max}}(n)}(K_n^{(\beta)})\big)^b\right]\leq c_b \Big(n^{\gamma-\beta}+\E\big[W_{k_{\rm{max}}(n)}(K_n^{(\beta)})\big]\Big).
\end{equation}
Write
\begin{align*}
\E\left[\big(W_{k_{\rm{max}}(n)}(K_n^{(\beta)})\big)^b\right] &= \sum_{k=1}^\infty \E\Big[\Big(W_k(K_n^{(\beta)}\big)^b\mathbbm{1}_{\{k=k_{\rm{max}}(n)\}}\Big] \\
&\leq \sum_{k=1}^\infty \E\Big[\Big(W_k(K_n^{(\beta)}\big)^b\mathbbm{1}_{\{k\leq k_{\rm{max}}(n)\}}\Big] \\
&\leq \sum_{k=1}^\infty \E\Big[\Big(W_k(K_n^{(\beta)})\Big)^b\mid k\leq k_{\rm{max}}(n)\Big]\pr\big(k\leq k_{\rm{max}}(n)\big).
\end{align*}
As before, we can apply the Markov property at time $k-1$ and Lemma \ref{lem:closetoL} to get a constant $c$ not depending on $n$ or $k$ such that
\[\E\Big[\big(W_k(K_n^{(\beta)})\big)^b\mid k\leq k_{\rm{max}}(n)\Big]\leq c.\]
We can now write
\begin{align*}
\E\left[\big(W_{k_{\rm{max}}(n)}(K_n^{(\beta)})\big)^b\right]&\leq c\sum_{k=1}^{\infty}\pr(k\leq k_{\max}(n)) \\
             &\leq c\E[k_{\rm{max}}(n)] \\
             &\leq ca^{-1}\big(n^{\gamma-\beta}t+\E[W_{k_{\rm{max}}(n)}(K_n^{(\beta)})]\big),          
\end{align*}
where the last line comes from (\ref{eq:debutmajorationkmax}). This gives (\ref{eq:kmaxjumpbthpower}). Now to prove (i), let $\delta>0$, and $b>1$, and write
\begin{align*}
\E\Big[W_{k_{\rm{max}}(n)}(K_n^{(\beta)})\Big]&\leq n^{\delta}+\E\Big[W_{k_{\rm{max}}(n)}(K_n^{(\beta)})\mathbbm{1}_{\{W_{k_{\rm{max}}(n)}(K_n^{(\beta)})>n^{\delta}\}}\Big]\\
           &\leq n^{\delta}+n^{-(b-1)\delta}\E\big[\big(W_{k_{\rm{max}}(n)}(K_n^{(\beta)})\big)^b\big] \\
           &\leq n^{\delta}+n^{-(b-1)\delta}c_b\Big(n^{\gamma-\beta}+\E\big[W_{k_{\rm{max}}(n)}(K_n^{(\beta)})\big]\Big).
\end{align*}
Notice that $\E\big[W_{k_{\rm{max}}(n)}(K_n^{(\beta)})\big]$ is finite for $n>2\veps^{-1}$ because, with the changes made in Section \ref{sec:mix:changements}, there is at least one change of type after $\sigma_{k_{\rm{max}}(n)-1}(K_n^{(\beta)})$, implying $W_{k_{\rm{max}}(n)}(K_n^{(\beta)})\leq T_0(Z_n^{(\beta)})$ which has finite expectation by (\ref{eq:momentsfinis}). We can then write
\[\E\Big[W_{k_{\rm{max}}(n)}(K_n^{(\beta)})\Big](1-n^{-(b-1)\delta}c_b)\leq n^{\delta}+n^{\gamma-\beta-(b-1)\delta}c_b,\]
which yields (i) if $b>\max(1,(\gamma-\beta)/\delta)$.

\noindent $\bullet$ Point (ii) is obtained by combining (\ref{eq:debutmajorationkmax}) with point (i).

\noindent $\bullet$ For the first part of point (iii), notice that $\sigma_{k_{\rm{max}}(n)-1}(K_n^{(\beta)})\leq n^{\gamma-\beta}(t\wedge S_{n,\veps})\leq \sigma_{k_{\rm{max}}(n)}(K_n^{(\beta)})$ and thus $\E\big[|n^{\beta-\gamma}\sigma_{k_{\rm{max}}(n)}(K_n^{(\beta)}) - t\wedge S_{n,\veps}|\big]\leq n^{\beta-\gamma}\E\big[\sigma_{k_{\rm{max}}(n)}(K_n^{(\beta)})-\sigma_{k_{\rm{max}}(n)-1}(K_n^{(\beta)})\big]$. By (i) and the hypothesis $\gamma>\beta$, we get that $\E\big[|n^{\beta-\gamma}\sigma_{k_{\rm{max}}(n)}(K_n^{(\beta)}) - t\wedge S_{n,\veps}|\big]$ tends to $0$.
The second part of (iii) is then reduced to showing that $n^{\beta-\gamma}(\sigma_{k_{\rm{max}}(n)}(L')-\sigma_{k_{\rm{max}}(n)}(K_n^{(\beta)}))$ tends to $0$ in $L_1$. Rewriting this as the sum of the differences of the waiting times and then using Wald's formula again we obtain
\begin{eqnarray*}
\E\Big[\left|\sigma_{k_{\rm{max}}(n)}(L')-\sigma_{k_{\rm{max}}(n)}(K_n^{(\beta)})\right|\Big]&\leq&
\E\left[\sum_{k=1}^{k_{\rm{max}}(n)}\left|W_k(K_n^{(\beta)})-W_k(L')\right|\right] \\ 
&\underset{\text{Lemma \ref{wald}} + (\ref{bigcoupling})}\leq& \rho_{\lfloor n\veps\rfloor} \E[k_{\rm{max}}(n)] \\ 
&\underset{\text{(ii)}} \leq& Cn^{\gamma-\beta}\rho_{\lfloor n\veps\rfloor},
\end{eqnarray*}
thus ending the proof, since $\rho_{\lfloor n\veps\rfloor}$ has limit $0$.
\end{proof}

\subsubsection{Proof of (\ref{simplemelange})}
\label{sec:endmixing}

Let 
$$I_L(n)=\int_{0}^{t\wedge S_{n,\veps}} \delta_{L(n^{\gamma-\beta}s)}\mathrm d s, \quad I_{L'}(n)=\int_{0}^{t\wedge S_{n,\veps}} \delta_{L'(n^{\gamma-\beta}s)}\mathrm d s \quad \text{and} \quad I_{K}(n)=\int_{0}^{t\wedge S_{n,\veps}} \delta_{K_n^{(\beta)}(n^{\gamma-\beta}s)}\mathrm d s.$$ We will argue that both $I_{L'}(n)-I_K(n)$ and $I_L(n)-I_{L'}(n)$ converge in $L_1$ to the zero measure as $n$ goes to infinity, which, combined with (\ref{eq:limiteL}), will give (\ref{simplemelange}). Since the considered measures are in a finite--dimensional vector space, we use the simple norm $|.|$ given, for a measure $\nu$ on $\{1,\ldots,\kappa\}$, by
\[|\nu|=\sum_{i=1}^{\kappa} |\nu(i)|.\]

\noindent $\bullet$ Notice first that, knowing that $L'$ and $K_n^{(\beta)}$ have the same jumps, but simply jump at different times, we can bound $|I_{L'}(n)-I_K(n)|$ by 
\begin{align*}
\big|I_{L'}(n)-I_{K}(n)\big|\leq \frac{1}{n^{\gamma-\beta}} \Bigg( &\sum_{k=1}^{k_{\mathrm{max}}(n)}\big|W_k(K^{(\beta)}_n)-W_k(L')\big| +\\ &\big|n^{\gamma-\beta}(t\wedge S_{n,\veps}) - \sigma_{k_{\rm{max}}(n)}(K_n^{(\beta)})\big| + \big|n^{\gamma-\beta}(t\wedge S_{n,\veps}) - \sigma_{k_{\rm{max}}(n)}(L')\big|\Bigg).
\end{align*}
By Lemma \ref{lem:kmax}, the second and third terms in the brackets tend to $0$ in  $L_1$ when divided by $n^{\gamma-\beta}$. The first one has already been shown to converge in $L_1$ to $0$ at the end of the proof of Lemma \ref{lem:kmax}.

\noindent $\bullet$  Comparing $I_L(n)$ and $I_{L'}(n)$ requires more work. We will, in order, prove that all the following random variables converge to $0$ in $L_1$:
\begin{itemize}
\item[$\mathrm{(i)}$] $n^{\beta-\gamma}\left(\sigma_{i_{k_{\mathrm{max}}(n)}}(L)-\sigma_{k_{\mathrm{max}}(n)}(L')\right),$
\item[$\mathrm{(ii)}$] $\Big|I_{L'}(n)-n^{\beta-\gamma}\sum_{k=1}^{k_{\mathrm{max}}(n)}W_k(L')\delta_{L'(\sigma_{k-1}(L'))}\Big|,$
\item[$\mathrm{(iii)}$] $\Big|I_L(n)-n^{\beta-\gamma}\sum_{k=1}^{i_{k_{\mathrm{max}}(n)}}W_k(L)\delta_{L(\sigma_{k-1}(L))}\Big|,$
\item[$\mathrm{(iv)}$] $\big|I_L(n)-I_{L'}(n)\big|.$
\end{itemize}

\noindent $\circ$ The proof of (i) relies on two basic observations: for every $k$, $i_{k+1}-i_k$ is, conditionally on $\mathcal{A}_n(k)$ and $k+1\leq k_{\rm{max}}(n)$, equal to $1$ with probability at least $1-\rho_{\lfloor n\veps\rfloor}$ (see (\ref{ik})) and, conditionally on it not being equal to $1$ (an event with probability less than $\rho_{\lfloor n\veps\rfloor}$), it is $1$ plus some hitting time of the discrete, finite state space Markov chain embedded in $L$, and thus bounded in expectation by irreducibility. We can write \[
\mathbb{E}\big[i_{k+1}-i_k\mid \mathcal{A}_n(k),k+1\leq k_{\rm{max}}(n)\big]\leq 1 + D\rho_{\lfloor n\veps\rfloor}\] for some constant $D>0$. Moreover, if we additionally condition on the value of $i_{k+1}-i_k$, then $\sigma_{i_{k+1}}(L)-\sigma_{i_k}(L)-(\sigma_{k+1}(L')-\sigma_k(L'))$ is just the time it takes for $L$ to go from $L\big(\sigma_{{i_k}+1}(L)\big)$ to $K_n^{(\beta)}\big(\sigma_{k+1}(K_n^{(\beta)}\big)$, knowing it needs $i_{k+1}-i_k-1$ independent jumps to do so. Thus
\[\E\left[\sigma_{i_{k+1}}(L)-\sigma_{i_k}(L)-(\sigma_{k+1}(L')-\sigma_k(L'))\mid  \mathcal{A}_n(k),k+1\leq k_{\rm{max}}(n),i_{k+1}-i_k \right]\leq D'(i_{k+1}-i_k-1)\]
where $D'=\displaystyle\underset{i\in\{1,\ldots,\kappa\}}\sup 1/|q_{i,i}|$. So finally,
\begin{equation}
\label{majoDD'}
\mathbb E \left[ \sigma_{i_{k+1}}(L)-\sigma_{i_k}(L)-(\sigma_{k+1}(L')-\sigma_k(L'))\mid k+1\leq k_{\rm{max}}(n) \right] \leq D'D\rho_{\lfloor n\veps\rfloor}.
\end{equation}
Write then
\begin{eqnarray*}
\E\left[\sigma_{i_{k_{\rm{max}}(n)}}(L)-\sigma_{k_{\rm{max}}(n)}(L')\right]&=&\E\left[\sum_{k=0}^{k_{\rm{max}}(n)-1}\sigma_{i_{k+1}}(L)-\sigma_{i_k}(L)-\big(\sigma_{k+1}(L')-\sigma_k(L')\big)\right] \\
&\underset{(\ref{majoDD'})+\text{Lemma \ref{wald}}} \leq&  D'D\rho_{\lfloor n\veps\rfloor}\E\big[k_{\rm{max}}(n)\big]
\end{eqnarray*}
and by Lemma \ref{lem:kmax} (ii), this tends to $0$ when multiplied by $n^{\beta-\gamma}$.

\noindent $\circ$ Item (ii) is proved by noting that 
\[\bigg|I_{L'}(n)-n^{\beta-\gamma}\sum_{k=1}^{k_{\mathrm{max}}(n)}W_k(L')\delta_{L'(\sigma_{k-1}(L'))}\bigg|=\bigg| n^{\beta-\gamma}\sigma_{k_{\mathrm{max}}(n)}(L')-t\wedge S_{n,\veps}\bigg|,\]
and so its limit is $0$ by Lemma \ref{lem:kmax} (iii).

\noindent $\circ$ For (iii), notice similarly that \[\bigg|I_{L}(n)-n^{\beta-\gamma}\sum_{k=1}^{i_{k_{\mathrm{max}}(n)}}W_k(L)\delta_{L(\sigma_{k-1}(L))}\bigg|=\bigg| n^{\beta-\gamma}\sigma_{i_{k_{\mathrm{max}}(n)}}(L)-t\wedge S_{n,\veps}\bigg|,\]
and so its limit is $0$ by (i) and Lemma \ref{lem:kmax} (iii).

\noindent $\circ$  Finally for (iv), notice first that, by (ii) and (iii),
\[\underset{n\to\infty}\lim \big|I_L(n)-I_{L'}(n)\big|=\underset{n\to\infty}\lim n^{\beta-\gamma}\Bigg|\sum_{k=1}^{i_{k_{\mathrm{max}}(n)}}W_k(L)\delta_{L(\sigma_{k-1}(L))} - \sum_{k=1}^{k_{\mathrm{max}}(n)}W_k(L')\delta_{L'(\sigma_{k-1}(L')}\Bigg|.\] Note that, by construction, for all $k\leq k_{\mathrm{max}}(n)$, the $k$-th term of the second sum is equal to the $(i_{k-1}+1)$-th term in the first one, and so we can write
\[\bigg|\sum_{i=i_{k-1}+1}^{i_k}W_i(L)\delta_{L(\sigma_{i-1}(L))}-W_k(L')\delta_{L'(\sigma_{k-1}(L'))}\bigg| \leq \sum_{i=i_{k-1}+1}^{i_k}W_i(L)-W_k(L').\]
Summing over $k$, we have
\begin{align*}
\bigg|\sum_{k=1}^{i_{k_{\mathrm{max}}(n)}}W_k(L)\delta_{L(\sigma_{k-1}(L))} - \sum_{k=1}^{k_{\mathrm{max}}(n)}W_k(L')\delta_{L'(\sigma_{k-1}(L'))}\bigg|&\leq \sum_{k=1}^{k_{\rm{max}}(n)}\left(\sigma_{i_k}(L)-\sigma_{i_{k-1}}(L)-(\sigma_k(L')-\sigma_{k-1}(L'))\right) \\
  &\leq \sigma_{i_{k_{\mathrm{max}}(n)}}(L)-\sigma_{k_{\mathrm{max}}(n)}(L'),
\end{align*}
and (i) ends the proof of (iv). \qed


\section{Solo regime}
\label{SecSolo}


We now focus on cases where the rate of type change is much smaller than the rate of macroscopic jumps. The chain will therefore not change type in the scaling limit, with a dynamic that only depends on its initial type, which brings us back to the standard monotype setting.

\begin{mybox}
\noindent \textbf{Hypothesis $(\mathsf H_{\mathrm{sol}})$.} We fix a type $i$ and assume that there exists $\gamma>0$ such that:
\begin{enumerate}
\item[(i)] There exists a non--trivial, finite measure $\mu^{(i)}$ on $[0,1]$ such that for all continuous functions $f:[0,1]\rightarrow \mathbb R$,
$$
n^{\gamma} \sum_{m=0}^n f\left(\frac{m}{n}\right) \left(1-\frac{m}{n}\right)p_{n,i}(m,i) \underset{n\rightarrow \infty}\longrightarrow \int_{[0,1]} f(x)\mu^{(i)}(\mathrm dx). 
$$
\item[(ii)] Moreover,  
$$
\sum_{j\in \{1,\ldots,\kappa\} \backslash\{i\}}P_n(i,j)=o(n^{-\gamma}).
$$
\end{enumerate}
\end{mybox}

\smallskip

As before, we let $Z_n^{(i)}$ denote the Lamperti transform of $X_n^{(i)}$ defined by (\ref{Zn}) via the time--change (\ref{timechange}). 

\smallskip

\begin{thm} 
\label{ThSolo}  Fix a type $i$ and assume $(\mathsf H_{\mathrm{sol}})$ for $i$. 

\noindent \emph{(i)} Then,
$$
\left(\frac{X^{(i)}_n(\lfloor n^{\gamma} \cdot \rfloor)}{n}, Z_n^{(i)}(\lfloor n^{\gamma} \cdot \rfloor)\right) \ \overset{\mathrm{(d)}}{\underset{n \rightarrow \infty} \longrightarrow} \ \big(X^{(i)}, Z^{(i)} \big),
$$
where $-\log(Z^{(i)})$ is a subordinator with Laplace transform 
$$
\psi_{(i)}(q)=\mu^{(i)}(\{0\})+\mu^{(i)}(\{1\})q+\int_{(0,1)}\left(\frac{1-x^q}{1-x}\right)\mu^{(i)}(\mathrm dx),
$$
and $X^{(i)}$ is the $\gamma$--Lamperti transform of $Z^{(i)}$.

\noindent \emph{(ii)} Assume moreover that $\liminf_{n \rightarrow \infty} n^{-\gamma}\sum_{k=0}^{\lfloor rn \rfloor} \sum_{l \in \{1,\ldots,\kappa\}} p_{n,j}(k,\ell)>0$ for some $r<1$ and all types $j$.
Then, jointly with the previous convergence
$$
\frac{A_n^{(i)}}{n^{\gamma}} \overset{\mathrm{(d)}}{\underset{n \rightarrow \infty} \longrightarrow} I^{(i)},
$$
with $I^{(i)}$ the extinction time of $X^{(i)}$. Additionally, there is convergence of all positive moments of $A_n^{(i)}/n^{\gamma}$ to those of $I^{(i)}$, which are all finite.
\end{thm}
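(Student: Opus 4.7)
Write $Y_n(t):=X^{(i)}_n(\lfloor n^{\gamma}t\rfloor)/n$. The strategy is to reduce to the monotype setting of \cite{HM11} by coupling with an auxiliary single-type chain. Define a monotype $\mathbb Z_+$-valued chain $\tilde X_n$ with transitions
\[\tilde p_m(k):=p_{m,i}(k,i)+r_m\mathbbm{1}_{\{k=0\}},\qquad r_m:=\sum_{j\neq i}P_m(i,j),\]
so that $\tilde X_n$ follows the position dynamics while the type stays at $i$ and is sent to $0$ whenever a type change would have occurred. A direct check using $(\mathsf H_{\mathrm{sol}})$ shows that $(\tilde p_n)$ satisfies the monotype assumption \eqref{HypMonotype} with the same limiting measure $\mu^{(i)}$, since the extra contribution is $n^{\gamma}f(0)r_n\to 0$ by $(\mathsf H_{\mathrm{sol}})$(ii). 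Couple $(X^{(i)}_n,J^{(i)}_n)$ with $\tilde X_n$ so that they coincide on $[0,T_n)$, where $T_n$ is the first type-change time. The main results of \cite{HM11} then yield the Skorokhod convergence of $\tilde Y_n(t):=\tilde X_n(\lfloor n^{\gamma}t\rfloor)/n$ to $X^{(i)}$, the positive self-similar Markov process with Lamperti exponent $\psi_{(i)}$, together with convergence of the absorption time and all its positive moments.

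The heart of part (i) is to show that $Y_n$ and $\tilde Y_n$ are close in Skorokhod on any compact interval. Fix $T,\epsilon>0$ and let $\tau_n^{\epsilon}:=\inf\{k:X^{(i)}_n(k)\leq \epsilon n\}$. Since $r_m=o(m^{-\gamma})$, we have $\sup_{m\geq\epsilon n}m^{\gamma}r_m\to 0$, and the expected number of type changes before $Tn^{\gamma}\wedge\tau_n^{\epsilon}$ is bounded by $Tn^{\gamma}\cdot\sup_{m\geq\epsilon n}r_m\to 0$. Hence $\mathbb P(T_n\leq Tn^{\gamma}\wedge\tau_n^{\epsilon})\to 0$, and on the complementary event either $T_n>Tn^{\gamma}$ (so $Y_n=\tilde Y_n$ on $[0,T]$), or $T_n>\tau_n^{\epsilon}$ (so $Y_n$ and $\tilde Y_n$ agree on $[0,\tau_n^{\epsilon}/n^{\gamma}]$ and are both bounded by $\epsilon$ thereafter by monotonicity). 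In either case the Skorokhod distance between $Y_n|_{[0,T]}$ and $\tilde Y_n|_{[0,T]}$ does not exceed $2\epsilon$, so $Y_n\overset{(d)}{\to}X^{(i)}$; joint convergence with $Z_n^{(i)}$ follows from Lemma \ref{lampsko}.

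For part (ii), the additional hypothesis plays the role it does in Lemma \ref{lem:tightness}: one constructs a monotype chain $U_n$ that stochastically dominates $X^{(i)}_n$ uniformly in the starting type and whose transitions satisfy the hypotheses of \cite[Theorems 1 and 2]{HM11}, providing the uniform bound $\sup_n\mathbb E[(n^{-\gamma}A^{(i)}_n)^a]<\infty$ for every $a\geq 0$. Using the identity $n^{-\gamma}A^{(i)}_n=\int_0^{\infty}(Z_n^{(i)}(r))^{\gamma}\mathrm dr$, Fatou's lemma gives $\liminf n^{-\gamma}A^{(i)}_n\geq I^{(i)}$; for the reverse inequality, split $A^{(i)}_n$ at $\tau_n^{\epsilon}$ and use the uniform moment bound to control the residual time after $\tau_n^{\epsilon}$ by $O(\epsilon^{\gamma})$, arguing exactly as in the proof of Theorem \ref{ThCriticalJoint}. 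Letting $\epsilon\to 0$ yields $\limsup \mathbb E[n^{-\gamma}A^{(i)}_n]\leq \mathbb E[I^{(i)}]$, identifying the limit. Uniform integrability then upgrades this to convergence of all positive moments. The main obstacle is the coupling estimate of the second paragraph: one must leverage the monotonicity of $Y_n$ to convert a level truncation at $\epsilon n$ into Skorokhod-level control, handling the fact that $r_m$ is uncontrolled for small $m$ via the observation that both chains have already dropped below $\epsilon$ by that point.
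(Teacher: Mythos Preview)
Your proposal is correct and follows a route that differs from the paper in instructive ways.

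For part (i), the paper introduces \emph{two} auxiliary monotype chains: one, $X_n^{(\mathrm r)}$, identical to your $\tilde X_n$ (type change sends the position to $0$), and another, $X_n^{(\mathrm q)}$, which simply ignores the type label. Both satisfy the monotype hypothesis with measure $\mu^{(i)}$; since $T_n^{\mathrm{type}}=A_n^{(\mathrm r)}$ and $A_n^{(\mathrm q)}=T_n^{\mathrm{type}}+\tilde A^{(\mathrm q)}_{X_n^{(\mathrm q)}(T_n^{\mathrm{type}})}$, the fact that both $n^{-\gamma}A_n^{(\mathrm r)}$ and $n^{-\gamma}A_n^{(\mathrm q)}$ converge to the same limit forces $n^{-\gamma}\tilde A^{(\mathrm q)}_{X_n^{(\mathrm q)}(T_n^{\mathrm{type}})}\to 0$, hence $X_n^{(i)}(T_n^{\mathrm{type}})/n\to 0$ in probability, and the uniform bound $\|Y_n-\tilde Y_n\|_\infty\le X_n^{(i)}(T_n^{\mathrm{type}})/n$ finishes the job. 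Your argument bypasses this cancellation trick entirely: you estimate directly $\mathbb P(T_n\le Tn^\gamma\wedge \tau_n^\epsilon)\le Tn^\gamma\sup_{m\ge \epsilon n}r_m\to 0$ from $(\mathsf H_{\mathrm{sol}})$(ii), and then use monotonicity below level~$\epsilon$. This is more elementary and avoids invoking the absorption-time convergence from \cite{HM11} twice.

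For part (ii), the paper again splits at $T_n^{\mathrm{type}}$: the first piece already converges to $I^{(i)}$ (with moments) by the $X_n^{(\mathrm r)}$ analysis, and the residual $\tilde A^{(J)}_{X_n^{(i)}(T_n^{\mathrm{type}})}/n^\gamma$ goes to $0$ in every $L^a$ by the uniform moment bound and $X_n^{(i)}(T_n^{\mathrm{type}})/n\to 0$. You instead transplant the Fatou/splitting scheme from Theorem~\ref{ThCriticalJoint}, truncating at the level $\epsilon n$ rather than at the first type change. Both work; the paper's route is slightly shorter here because it reuses $X_n^{(i)}(T_n^{\mathrm{type}})/n\to 0$ from part~(i), whereas your version requires the additional (routine) observation that $n^{-\gamma}\tau_n^\epsilon$ is uniformly integrable and converges to $T_\epsilon(X^{(i)})$.

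One small omission worth noting: your use of $n^{-\gamma}A_n^{(i)}=\int_0^\infty (Z_n^{(i)}(r))^\gamma\,\mathrm dr$ tacitly assumes the only absorbing state is $0$; the standard reduction (as in Section~\ref{sec:foreword}) should be mentioned.
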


\bigskip

\noindent \textbf{Remark.} In words, the additional assumption in Theorem \ref{ThSolo} (ii) says that the probability of doing a jump larger than $n-\lfloor an \rfloor$ is asymptotically larger than $cn^{-\gamma}$ for some $c>0$, whatever the starting type $j$. This assumption is probably too strong to get the conclusion of (ii) -- for example, in the case where $p_{n,i}(n,i)=1-n^{-\gamma}$,  $p_{n,i}(\lfloor n/2 \rfloor,i)=n^{-\gamma}$ and $\sum_{j\in \{1,\ldots,\kappa\} \backslash\{i\}}P_n(i,j)=n^{-\gamma-\varepsilon}$, $\varepsilon>0$, one can check that both (i) and (ii) hold, without any additional assumption. However we are not able to prove that $(\mathsf H_{\mathrm{sol}})$ alone implies the convergence of the absorption times in general. 

\begin{proof}
(i). Let $T_n^{\mathrm{type}}$ be the first time at which $X_n^{(i)}$ either changes its type, or is absorbed. Then, consider, on the one hand, the transition probabilities defined for $n,m \in \mathbb Z_+, m\leq n$, by
$$q_n(m)=p_{n,i}(m,i)+\sum_{j \in \{1,\ldots,\kappa\} \backslash \{i\}} p_{n,i}(m,j), \quad \forall m \leq n.$$
And, on the other hand, the transition probabilities defined for $n,m \in \mathbb Z_+, m\leq n$, by
$$r_n(m)=p_{n,i}(m,i) \quad \forall 1 \leq m \leq n, \quad  r_n(0)=p_{n,i}(0,i)+\sum_{j \in \{1,\ldots,\kappa\} \backslash \{i\}} p_{n,i}(m,j).$$
We can then couple the construction of $(X^{(i)}_n,J_n^{(i)})$ with that of two Markov chains $X^{\mathrm{(q)}}_n, X_n^{(\mathrm r)}$ on $\mathbb Z_+$ starting from $n$, with respective transition probabilities $(q_n(m)), (r_n(m))$ and such that $$X_n^{(\mathrm r)}(k)=X^{(i)}_n(k)\mathbbm 1_{\{k \leq T_n^{\mathrm{type}}-1\}},  \  \forall k \quad  \text{and} \quad X^{(i)}_n(k)=X^{\mathrm{(q)}}_n(k), \  \forall k \leq T_n^{\mathrm{type}}.$$ 
Next, note that  (i) and (ii) of $(\mathsf H_{\mathrm{sol}})$ imply the convergences
$$
n^{\gamma} \sum_{m=0}^n f\left(\frac{m}{n}\right) \left(1-\frac{m}{n}\right)q_{n}(m) \underset{n\rightarrow \infty}\longrightarrow \int_{[0,1]} f(x)\mu^{(i)}(\mathrm dx),
$$
$$
n^{\gamma} \sum_{m=0}^n f\left(\frac{m}{n}\right) \left(1-\frac{m}{n}\right)r_{n}(m) \underset{n\rightarrow \infty}\longrightarrow \int_{[0,1]} f(x)\mu^{(i)}(\mathrm dx),
$$
for all continuous $f:[0,1]\rightarrow \mathbb R$. This, together with Theorem 1 and Theorem 2 of \cite{HM11}, implies in turn that 
\begin{equation}
\label{soloeq1}
\left(\frac{X^{(\mathrm r)}_n(\lfloor n^{\gamma} \cdot \rfloor)}{n}, \frac{A^{(\mathrm r)}_n}{n^{\gamma}}\right) \ \overset{\mathrm{(d)}}{\underset{n \rightarrow \infty} \longrightarrow} \ \left(X^{(i)},  I^{(i)} \right), \quad \left(\frac{X^{(\mathrm q)}_n(\lfloor n^{\gamma} \cdot \rfloor)}{n}, \frac{A^{(\mathrm q)}_n}{n^{\gamma}}\right) \ \overset{\mathrm{(d)}}{\underset{n \rightarrow \infty} \longrightarrow} \ \left(X^{(i)},  I^{(i)} \right),
\end{equation}
with obvious notation, as well as the convergence of all positive moments of $n^{-\gamma}A_n^{(\mathrm r)}$ and $n^{-\gamma}A_n^{(\mathrm q)}$ to those of $I^{(i)}$. Note also that $$T_n^{\mathrm{type}}=A_n^{(\mathrm{r})}\quad \text{and} \quad A_n^{(\mathrm q)}=T_n^{\mathrm{type}}+\tilde A^{(\mathrm q)}_{X_n^{(\mathrm q)}(T_n^{\mathrm{type}})},$$ with $\tilde A^{(\mathrm q)}$ a process distributed as $A^{(\mathrm q)}$, independent of $X_n^{(\mathrm q)}(T_n^{\mathrm{type}})$ (for this we use that $T_n^{\mathrm{type}}$ is a randomized stopping time for $X_n^{(\mathrm q)}$). This, together with (\ref{soloeq1}), implies that $n^{-\gamma}\tilde A^{(\mathrm q)}_{X_n^{(\mathrm q)}(T_n^{\mathrm{type}})}$ converges to 0 in probability, which in turn implies  that 
\begin{equation}
\label{cv0}
\frac{X_n^{(i)}(T_n^{\mathrm{type}})}{n}=\frac{X_n^{(\mathrm q)}(T_n^{\mathrm{type}})}{n} \overset{\mathbb P}{\underset{n \rightarrow \infty} \longrightarrow} 0
\end{equation}
(note that $I^{(i)}>0$ a.s.).
So, finally, we have that
$$
\frac{X^{(\mathrm r)}_n(\lfloor n^{\gamma} \cdot \rfloor)}{n}  \underset{n\rightarrow \infty}{\overset{\mathrm{(d)}}\longrightarrow} X^{(i)}, \quad \text{for the Skorokhod topology}
$$
and 
$$
\left \|\frac{X^{(i)}_n(\lfloor n^{\gamma} \cdot \rfloor)}{n}-\frac{X^{(\mathrm r)}_n(\lfloor n^{\gamma} \cdot \rfloor)}{n}   \right \|_{\infty} \leq \frac{X_n^{(i)}(T_n^{\mathrm{type}})}{n} \underset{n\rightarrow \infty}{\overset{\mathbb P}\longrightarrow} 0
$$
and we conclude with a Slutsky--type argument that $n^{-1}X^{(i)}_n(\lfloor n^{\gamma} \cdot \rfloor)  \underset{n\rightarrow \infty}{\overset{\mathrm{(d)}}\longrightarrow} X^{(i)}.$

\bigskip

\noindent (ii) With our additional assumption, it is easy to prove, in a way very similar to the proof of Lemma \ref{lem:tightness}, that for all $a \geq 0$ and all types $j$,
\begin{equation}
\label{tightnessSolo}
\sup_{n \in \mathbb N}\mathbb E\left[\left(\frac{A_n^{(j)}}{n^{\gamma}}\right)^a\right]<\infty.
\end{equation}
Then, using the Markov property at time $T_n^{\mathrm{type}}$, we write 
$$
A_n^{(i)}=T_n^{\mathrm{type}}+\tilde A^{(J^{(i)}_n(T_n^{\mathrm{type}}))}_{X^{(i)}_n(T_n^{\mathrm{type}})}
$$
with $(\tilde A^{(j)}_k, k \geq 0, j \in \{1,\ldots,\kappa\})$ distributed as $(A^{(j)}_k, k \geq 0, j \in \{1,\ldots,\kappa\})$ and independent of \linebreak $(J^{(i)}_n(T_n^{\mathrm{type}}),X^{(i)}_n(T_n^{\mathrm{type}}))$. By (\ref{tightnessSolo}) and (\ref{cv0}), we have that 
$$\frac{\tilde A^{(J^{(i)}_n(T_n^{\mathrm{type}}))}_{X^{(i)}_n(T_n^{\mathrm{type}})}}{n^{\gamma}}\underset{n\rightarrow \infty}{\overset{\mathrm L^a} \longrightarrow}  0$$ for all $a \geq 0$. Besides,  Theorem 1 and Theorem 2 of \cite{HM11}  imply (\ref{soloeq1}) and the convergence of all positive moments of $n^{-\gamma}A_n^{(\mathrm r)}$ (equivalently $n^{-\gamma}T_n^{\mathrm{type}}$) to those of $I^{(i)}$. All this together implies the convergence in distribution of $n^{-\gamma} A_n^{(i)}$ to $I^{(i)}$ and that $\mathbb E\big[(n^{-\gamma} A_n^{(i)})^a \big]<\infty$ for all $a \geq 0$. Hence the conclusion. 
\end{proof}


\section{Applications}


As mentioned in the Introduction, the description of the scaling limits of non--increasing Markov chains on $\mathbb Z_+$ was an essential tool to describe the scaling limits of several random objects: random walks, coalescence or fragmentation--coalescence processes, trees, maps.

Our initial motivation to extend these results to  Markov chains on $\mathbb Z_+ \times \{1,\ldots, \kappa\}$ was to develop applications to the scaling limits of multi--type Markov branching trees, which is a natural family of trees carrying types, that includes some models of randomly growing trees, and multi--type Galton--Watson trees. These applications require some work and will be developed in the upcoming paper \cite{HS15}.

There are however others interesting, and more direct, applications. We mention here two of them.

\subsection{Collisions in coalescents in varying environment}

The $\Lambda$--coalescents were introduced by \cite{PitCoag99} and \cite{Sag99} and studied by several authors since then. These models allow multiple collisions (i.e more than 2 particles may coalesce at once) and the coalescing mechanism is driven by a finite measure on $[0,1]$, usually denoted by $\Lambda$. Roughly, such a process takes its values in the set of partitions of $\mathbb N$, is Markovian, exchangeable, and such that the rate at which $n$ particles (blocks) coalesce into $k$ particles (blocks), for $1\leq k \leq n-1$, is
\[
r_n(k)=\binom{n}{k-1}\int_{[0,1]}x^{n-k-1}(1-x)^{k-1}\Lambda(\mathrm dx), \quad 1 \leq k \leq n-1.
\]
The case where $\Lambda=\delta_0$ corresponds to Kingman's coalescent.
We refer to \cite{Berest09} for a review on that topic. 

We consider here a variation of this model where the environment may influence the coalescing mechanism, which is therefore allowed to vary from generation to generation. A generalization in the same spirit was already considered in \cite{Mohle02}.

\bigskip

\noindent \textbf{Coalescing mechanism.}  We assume that there are $\kappa$ possible environments.
Let $\Lambda^{(i)}, 1 \leq i \leq \kappa$ be $\kappa$ finite, non--trivial measures on $[0,1]$ such that $\Lambda^{(i)}(\{0\})=0$ and
\begin{equation}
\label{Hyp1Coal}
\int_{[u,1]} x^{-2}\Lambda^{(i)}(\mathrm dx) \underset{u \rightarrow 0}\sim c^{(i)} u^{-\gamma}
\end{equation}
for some $\gamma \in (0,1)$ and some strictly positive constants $c^{(i)}, 1 \leq i \leq \kappa$. To each of these measures, we associate the following Laplace exponent
\begin{equation}
\label{CoalLapl}
\psi_{(i)}(q)=\frac{1}{\Gamma(2-\gamma)c^{(i)}} \int_{[0,1]}\left(1-(1-x)^{q}\right)x^{-2}\Lambda^{(i)}(\mathrm dx).
\end{equation}
Besides, we let $P_n,n\geq 1$ be $\kappa \times \kappa$ stochastic matrices such that 
\begin{equation}
\label{Hyp2Coal}
n^{\beta}\left(P_n-I\right) \underset{n \rightarrow \infty} \rightarrow Q
\end{equation}
for some $\beta \geq 0$ and some irreducible $\mathsf Q$--matrix $Q$, and hence a unique stationary distribution denoted by $\pi=(\pi(i),1 \leq i \leq \kappa)$. 

The  coalescing mechanism then evolves as follows. In environment $i$, the particles coalesce according to the mechanism $\Lambda^{(i)}$, i.e., the probability that $n$ particles coalesce into $k$ particles is  
$$
p_n^{(i)}(k)=\frac{1}{Z^{(i)}_n}\binom{n}{k-1}\int_{[0,1]}x^{n-k-1}(1-x)^{k-1}\Lambda^{(i)}(\mathrm dx), \quad 1 \leq k \leq n-1
$$
where $Z^{(i)}_n$ is a normalizing constant.
Moreover, the probability that the environment changes from $i$ to $j$ when $n$ particles coalesce is $P_n(i,j)$,
so that the transition probabilities of our chain on $\mathbb Z_+ \times \{1,\ldots,\kappa\}$ is 
$$p_{(n,i)}(k,j)=P_n(i,j) p_n^{(i)}(k).$$ 
When the matrix $P_n$ is constant, independent of $n$, this corresponds to situations where the change of environments does not depend on the number of present particles.

\bigskip

\noindent \textbf{Number of collisions.} Starting from $n$ large, the quantity we are interested in is the total number of collisions (that is, the number of steps) until all the $n$ initial particles have coalesce in a unique particle. We let $K_n^{(i)}$ denote this random variable when the $n$ initial articles are in environment $i$. When there is a unique environment ($\kappa=1$), this question has been treated by several authors \cite{GY07,GIM08,IM08,IMM09,HM11,GIM11}. In a varying environment, we obtain as a direct consequence of our results:

\begin{thm}
Assuming (\ref{Hyp1Coal}) and (\ref{Hyp2Coal}), we have for all $i_0 \in \{1,\ldots, \kappa\}$, 
$$
\frac{K_n^{(i_0)}}{n^{\gamma}} \underset{n \rightarrow \infty}{\overset{\mathrm{(d)}}\longrightarrow} \int_0^{\infty} \exp(-\gamma \xi_r)\mathrm dr
$$
where,
\begin{enumerate}
\item[\emph{(i)}] If $\beta=\gamma$: $\xi$ is the first marginal of a \emph{MAP} $(\xi,J)$ on  $\mathbb R_+ \times \{1,\ldots,\kappa\}$, starting from $(\xi(0),J(0))=(0,i_0)$ and with characteristics: 
\begin{enumerate}
\item[$\circ$] $\lambda_{i,j}=Q(i,j)$ and $B_{i,j}=\delta_0$ for all $i \neq j$ 
\item[$\circ$]  $(1+Q(i,i) \mathbbm 1_{\{\beta=0\}})\psi^{(i)}$, with $\psi^{(i)}$ as defined in (\ref{CoalLapl}), for all types $i$.
\end{enumerate}
\item[\emph{(ii)}] If $0\leq \beta<\gamma$: $\xi$ is a subordinator with Laplace exponent $\sum_{i=1}^{\kappa} \pi(i) \psi^{(i)}.$
\item[\emph{(iii)}] If $\beta>\gamma>0$:  $\xi$ is a subordinator with Laplace exponent $\psi^{(i_0)}$.
\end{enumerate}
There is also convergence of all positive moments of $K_n^{(i_0)}/n^{\gamma}$ to those of $\int_0^{\infty} \exp(-\gamma \xi_r)\mathrm dr$. Moreover: if we denote by $X^{(i_0)}_n(k), k \geq 0$ the number of particles after $k$ collision steps, starting from $n$ particles in environment $i_0$,
$$
\frac{X_n^{(i_0)}(\lfloor n^{\gamma} \cdot\rfloor)}{n} \underset{n \rightarrow \infty}{\overset{\mathrm{(d)}}\longrightarrow} \exp(-\xi_{\rho}),
$$
where $\rho$ is the usual time--change $\rho(t)=\inf \{u:\int_0^u \exp(-\gamma \xi_r) \mathrm d r>t\}$. 
\end{thm}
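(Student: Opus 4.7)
The plan is to apply the results of Sections~\ref{SecCritique}--\ref{SecSolo} to the bivariate Markov chain $(X_n^{(i_0)},J_n^{(i_0)})$ whose transitions factor as $p_{n,i}(m,j)=P_n(i,j)\,p_n^{(i)}(m)$, verifying for each of the three regimes the relevant hypothesis and applying Theorems~\ref{ThCritical}--\ref{ThCriticalJoint} (critical), Theorems~\ref{ThMixing}--\ref{ThMixingjoint} (mixing), or Theorem~\ref{ThSolo} (solo). The convergence of $K_n^{(i_0)}=A_n^{(i_0)}$ then follows from the joint convergence statements in these theorems, and the identification of the limit $\int_0^\infty\mathrm e^{-\gamma\xi_r}\mathrm dr$ as well as that of $X^{(i_0)}_n(\lfloor n^{\gamma}\cdot\rfloor)/n\to\mathrm e^{-\xi_{\rho}}$ is simply the Lamperti representation~(\ref{LMAP}).

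The key technical input, common to all three regimes, is the monotype convergence
\[
n^{\gamma}\sum_{m=1}^{n-1}f(m/n)(1-m/n)\,p_n^{(i)}(m)\underset{n\to\infty}\longrightarrow\int_{(0,1]}f(x)\,\mu^{(i)}(\mathrm dx)
\]
for every continuous $f:[0,1]\to\mathbb R$ and every type $i$, where $\mu^{(i)}$ is the pushforward of $\Lambda^{(i)}(\mathrm dy)/(\Gamma(2-\gamma)\,c^{(i)}\,y)$ by $y\mapsto 1-y$, a finite measure on $[0,1)$ (finiteness is ensured by $\gamma<1$) whose associated Laplace exponent in the sense of~(\ref{HypMonotype}) is exactly the $\psi^{(i)}$ of~(\ref{CoalLapl}). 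I would prove this via the substitution $j=n-m$, using the identity $(j/n)\binom{n}{j+1}=(j/(j+1))\binom{n-1}{j}$ to make a $\mathrm{Binomial}(n-1,x)$ p.m.f.\ appear inside the integral, and then combining the law of large numbers for fixed $x\in(0,1)$ with a Karamata-type regular variation argument based on~(\ref{Hyp1Coal}) to handle the contribution of $x=\mathrm O(1/n)$ (which also yields $Z_n^{(i)}\sim c^{(i)}n^{\gamma}/\Gamma(2-\gamma)$ for the total coalescence rate). This step is the standard $\Lambda$--coalescent computation from the monotype literature, see \cite{HM11,GIM11}; no new idea is required but it is by far the most technical part of the argument, and I expect it to be the main obstacle.

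Once the display is proven, the mixing and solo regimes follow immediately. For $(\mathsf H_{\mathrm{mix}})$, condition~(i) is the display (since $p_n^{(i)}(\cdot)=\sum_j p_{n,i}(\cdot,j)$) and condition~(ii) is precisely~(\ref{Hyp2Coal}); Theorems~\ref{ThMixing}--\ref{ThMixingjoint} then produce a subordinator with Laplace exponent $\sum_i\pi_i\psi^{(i)}$. For $(\mathsf H_{\mathrm{sol}})$ applied at $i_0$, condition~(i) is again the display, condition~(ii) follows from $\sum_{j\neq i_0}P_n(i_0,j)=\mathrm O(n^{-\beta})=\mathrm o(n^{-\gamma})$, and the extra positivity assumption of Theorem~\ref{ThSolo}(ii) reduces to the existence of some $r<1$ with $\mu^{(j)}([0,r))>0$ for every type $j$, which holds since each $\mu^{(j)}$ is a finite non--trivial measure on $[0,1]$ supported in $[0,1)$ (the atom at $\{1\}$ vanishing because $\Lambda^{(j)}(\{0\})=0$).

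In the critical case $\beta=\gamma$, the product structure of the transitions renders the identification of the limiting \emph{MAP} characteristics very direct. For $j\neq i$, the weight $(1-(m/n)\mathbbm 1_{\{j=i\}})$ is just $1$, and the combination of $n^{\gamma}P_n(i,j)\to Q(i,j)$ with the concentration of $p_n^{(i)}$ on $m/n\to 1$ (macroscopic jumps being of probability $\mathrm O(n^{-\gamma})$) yields $\mu^{(i,j)}=Q(i,j)\delta_1$, so that $\lambda_{i,j}=Q(i,j)$ and $B_{i,j}=\delta_0$ as claimed; for $j=i$, $P_n(i,i)\to 1$ and the monotype display gives $\mu^{(i,i)}=\mu^{(i)}$, hence Laplace exponent $\psi^{(i)}$ (the indicator $\mathbbm 1_{\{\beta=0\}}$ being inactive since $\beta=\gamma>0$). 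Condition~(ii) of $(\mathsf H_{\mathrm{cr}})$ reduces to the irreducibility of $Q$. Theorems~\ref{ThCritical}--\ref{ThCriticalJoint} then conclude the proof in all three cases.
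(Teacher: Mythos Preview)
Your approach is essentially identical to the paper's, which simply cites Lemmas~8 and~9 of~\cite{HM11} for the monotype convergence $n^{\gamma}\sum_k(1-(k/n)^q)p_n^{(i)}(k)\to\psi_{(i)}(q)$ and then applies Theorems~\ref{ThCritical}--\ref{ThSolo} without spelling out the hypothesis checks that you provide. One minor correction: condition~(ii) of~$(\mathsf H_{\mathrm{cr}})$ does not need the irreducibility of~$Q$ but holds directly via clause~(a), since $\mu^{(i,i)}((0,1))=\mu^{(i)}((0,1))>0$ for every~$i$ (assumption~(\ref{Hyp1Coal}) with $\gamma>0$ forces $\Lambda^{(i)}((0,1))>0$).
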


\begin{proof}
By Lemma 8 and Lemma 9 of \cite{HM11}, we know that under (\ref{Hyp1Coal}),
$$
n^{\gamma} \sum_{k=0}^n \left(1-\left( \frac{k}{n}\right)^q\right) p^{(i)}_{n}(k)\underset{n \rightarrow \infty}{\longrightarrow} \psi_{(i)}(q), \quad  \text{for all } q \geq 0.
$$
 which, together with Hypothesis (\ref{Hyp2Coal}) and Theorems \ref{ThCritical}, \ref{ThCriticalJoint}, \ref{ThMixing}, \ref{ThMixingjoint} and \ref{ThSolo} readily yields the result.
\end{proof}

\subsection{Markov random walks with a barrier}

Random walks with barriers are variants of the usual random walks with i.i.d. increments, conditioned on not going over or below some fixed real numbers. In \cite{HM11} and \cite{IM08}, where they were also linked to some coalescent processes, some results on their scaling limits are established, in particular when the increments are heavy--tailed. We generalize these in a simple multi--type setting.

\bigskip

We consider a Markov random walk $\big((S_k,J_k),k\geq 0)$ on $\mathbb{Z}\times\{1,\ldots,\kappa\}$.  This process is the discrete analogue of a MAP and  a natural generalization of a random walk with i.i.d. increments. It is a process such that, conditionally on $\mathcal{F}_k$, where $\F_k$ is the sigma-field generated by $(S_l,J_l)$ for $l\leq k$, the distribution of $(S_{k+1}-S_k,J_{k+1})$ only depends on $J_k$. Otherwise said, $(J_k)_{k\geq 0}$ is a Markov chain on $\{1,\ldots,\kappa\}$ (often called the \emph{driving chain}) and, if $J$ jumps from $i$ to $j$, then the corresponding jump of $S$ has a distribution $(q_m^{(i,j)},m\in \Z)$ independently of the past, where the $(q_m^{(i,j)},m\in\Z)$ are probability distributions on $\Z$. We focus exclusively on the case where $(S_k,k\geq 0)$ is \emph{nondecreasing} i.e. the jump distributions $(q_m^{(i,j)},m\in\Z)$ are supported on $\Z_+$. In this case, the process is also sometimes referred to as a \emph{Markov renewal process}. For background on these processes, we refer to the work of Alsmeyer \cite{Alsmeyer2} and the references therein.

We will consider a variant of the Markov random walk which has a barrier at an integer $n\in\N$. Informally, this is a version of $\big((S_k,J_k),k\geq 0)$ such that each jump of $(S,J)$ is conditioned on not taking the $S$ component higher than level $n$. To be specific, let  $n\in\N$, $P=(P(i,j))_{1\leq i,j\leq \kappa}$ be a stochastic matrix, and  for $i,j\in\{1,\ldots,\kappa\}$ and $k\in\Z_+$, set $$\overline{q_{k}}^{(i,j)}=\sum_{l\geq k+1} q^{(i,j)}_l \quad \text{and} \quad \overline{q_{k}}^{(i)}=\sum_{j'\in \{1,\ldots,\kappa\}}P(i,j')\overline{q_{k}}^{(i,j')}.$$  We define a Markov chain $\big((S_n(k),J_n(k)),k\geq 0\big)$ on $\{0,\ldots,n\}\times\{1,\ldots,\kappa\}$ with the explicit jump rates $q^{\{n\}}_{(s,i)}(t,j)$ given by, for $(i,j,s,t)\in\{1,\ldots,\kappa\}^2\times\{0,\ldots,n\}^2$, $s \leq t$:
\[q^{\{n\}}_{(s,i)}(t,j)=
\begin{cases}
         \frac{P(i,j)q^{(i,j)}_{t-s}}{1-\overline{q_{n-s}}^{(i)}}\qquad&\text{if }\overline{q_{n-s}}^{(i)}<1,\\
         \mathbbm{1}_{\{{t=s}\}}P(i,j)\qquad\:&\text{if }\overline{q_{n-s}}^{(i)}=1.
         \end{cases}
\]
Moreover, we always start with $S_0(n)=0$, while $J_0(n)$ is deterministic. Under this setting, it is clear that, letting $X_n(k)=n-S_n(k),$ the process \[\big((X_n(k),J_n(k)),k\geq 0\big)\]
is a Markov chain on $\{0,\ldots,n\}\times\{1,\ldots,\kappa\}$, and its transition probabilities are given by
\[p_{(s,i)}(t,j)=\begin{cases}
         \frac{P(i,j)q^{(i,j)}_{s-t}}{1-\overline{q_s}^{(i)}}\qquad&\text{if }\overline{q_{s}}^{(i)}<1,\\
         \mathbbm{1}_{\{{t=s}\}}P(i,j)\qquad\:&\text{if }\overline{q_{s}}^{(i)}=1.
         \end{cases}
\]
These do not depend on $n$, and as such fall under the framework of the paper. Hence we use again the notation $(X^{(i)}_n(k),J_n^{(i)}(k))$ to signify that the starting point is $(n,i)$.  

\bigskip

We can then give under a few conditions the scaling limit of $X_n$ and its absorption time $A_n$.
To this end, notice first that for $n$ large enough and fixed $i,j$,
\[\sum_{k=0}^n p_{(n,i)}(k,j)=\frac{1-\overline{q_n}^{(i,j)}}{1-\overline{q_n}^{(i)}}P(i,j) \underset{n \rightarrow \infty}\longrightarrow P(i,j).\]
In other words, when $n$ is large, the types behave as a random walk with transition matrix $P$, which means that we could only end up in the mixing regime, with $\beta=0$ and $Q=P-I$. 
\\

\begin{thm} We assume that the matrix $P$ is irreducible, and call $\pi$ its invariant measure.

\emph{(i)} Let $\gamma\in(0,1)$ and assume that, for all $i,j\in\{1,\ldots,\kappa\}$, there exists $a_i>0$ such that $n^{\gamma}\overline{q_n}^{(i)}$ converges to $a_i$. Then
\[
\left(\bigg(\frac{X_n^{(i)}(\lfloor n^{\gamma}t \rfloor)}{n}, t\geq 0\bigg), \frac{A_n^{(i)}}{n^{\gamma}}\right)  \overset{\mathrm{(d)}}{\underset{n \rightarrow \infty} \longrightarrow} \Big(\left(Z_{\rho(t)},t\geq 0\right),\int_0^{\infty}(Z_t)^{\gamma}\mathrm d t \Big),
\]
where $-\log Z$ is a subordinator with Laplace exponent $\psi$ defined for $\lambda\geq 0$ by
\[\psi(\lambda)=\sum_{i=1}^{\kappa} \pi_i a_i\int_0^{\infty}(1-\e^{-\gamma x})\frac{\gamma\e^{-x}\mathrm d x}{(1-\e^{-x})^{\gamma+1}}\]
and $\rho(t)=\inf \left\{u:\int_0^u Z(r)^{\gamma} \mathrm dr>t\right\}.$ We also have convergence of all positive moments for the second coordinate.

\emph{(ii)} Assume that, for all $i\in\{1,\ldots,\kappa\}$, $m_i:=\sum_{j=1}^{\kappa}\sum_{k=1}^{\infty} k P(i,j)q_k^{(i,j)}$ is finite. Then, letting $m=\sum_{i=1}^{\kappa} m_i$, we have
\[
\left(\bigg(\frac{X_n^{(i)}(\lfloor nt \rfloor)}{n}, t\geq 0\bigg), \frac{A_n^{(i)}}{n}\right ) \ \overset{\mathrm{(d)}}{\underset{n \rightarrow \infty} \longrightarrow} \Big(\big((1-mt)\vee 0,t\geq 0\big),\frac{1}{m} \Big).
\]
We also have convergence of all positive moments for the second coordinate.
\end{thm}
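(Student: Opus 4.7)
The plan is to reduce everything to the mixing regime of Section \ref{SecMixing} and apply Theorems \ref{ThMixing} and \ref{ThMixingjoint}. The transition probabilities are
\[p_{(n,i)}(m,j) = \frac{P(i,j)\, q_{n-m}^{(i,j)}}{1-\overline{q_n}^{(i)}}\]
(once $n$ is large enough that $\overline{q_n}^{(i)} < 1$), and summing in $m$ gives the type-marginal transition $P_n(i,j) = P(i,j)(1-\overline{q_n}^{(i,j)})/(1-\overline{q_n}^{(i)}) \to P(i,j)$. Hence $P_n - I \to P - I =: Q$ and, as already noted in the preamble of the theorem, we will only ever be in the mixing regime with $\beta = 0$; the $\mathsf Q$-matrix $Q$ is irreducible by assumption. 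Only hypothesis (i) of $(\mathsf H_{\mathrm{mix}})$ remains to check.

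For part (i), the change of variable $k = n - m$ rewrites $n^{\gamma}\sum_{m=0}^n f(m/n)(1-m/n)p_n^{(i)}(m)$ as
\[\frac{n^{\gamma-1}}{1 - \overline{q_n}^{(i)}} \sum_{k=0}^n f(1-k/n)\, k\, q_k^{(i)},\]
where $q_k^{(i)} := \sum_j P(i,j) q_k^{(i,j)}$ has tail $\overline{q_k}^{(i)}$ satisfying $n^{\gamma}\overline{q_n}^{(i)} \to a_i$. The prefactor tends to $1$, and a summation-by-parts / Karamata-type argument -- writing $q_k^{(i)} = \overline{q_{k-1}}^{(i)} - \overline{q_k}^{(i)}$ and using only the tail assumption, with no extra regularity on $q_k^{(i)}$ itself -- shows that the above expression converges to $\gamma a_i \int_0^1 f(1-u)\, u^{-\gamma}\,\mathrm{d}u$. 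This identifies $\mu^{(i)}(\mathrm{d}x) = \gamma a_i (1-x)^{-\gamma}\mathbbm{1}_{(0,1)}(x)\,\mathrm{d}x$, which is finite (since $\gamma < 1$) and nontrivial (since $a_i > 0$). Then
\[\psi_i(\lambda) = \int_{(0,1)} (1-x^{\lambda})\frac{\mu^{(i)}(\mathrm{d}x)}{1-x} = \gamma a_i \int_0^1(1-x^{\lambda})(1-x)^{-\gamma-1}\,\mathrm{d}x,\]
and the substitution $x = \e^{-u}$ transforms this into the expression announced in the theorem, namely $a_i \int_0^{\infty}(1-\e^{-\lambda u})\gamma\e^{-u}(1-\e^{-u})^{-\gamma-1}\,\mathrm{d}u$. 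Theorems \ref{ThMixing} and \ref{ThMixingjoint} then conclude, with $\psi = \sum_i \pi_i \psi_i$.

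For part (ii), the finite mean assumption $m_i < \infty$ lets us apply dominated convergence with $\gamma = 1$: the integrand $f(1 - k/n)\, k\, q_k^{(i)}$ is dominated uniformly in $n$ by $\|f\|_{\infty}\, k\, q_k^{(i)}$, which is summable. Hence
\[n\sum_{m=0}^n f(m/n)(1-m/n)p_n^{(i)}(m) = \frac{1}{1 - \overline{q_n}^{(i)}}\sum_{k=0}^n f(1-k/n)\, k\, q_k^{(i)} \longrightarrow f(1)\,m_i,\]
so $\mu^{(i)} = m_i \delta_1$, $\psi_i(\lambda) = \lambda m_i$, and $\psi(\lambda) = \lambda \sum_i \pi_i m_i$ is that of a pure drift; explicitly $Z(t) = \exp(-t \sum_i \pi_i m_i)$ and an elementary Lamperti-transform computation gives $X(t) = (1 - t\sum_i \pi_i m_i) \vee 0$, absorbed at $1/\sum_i \pi_i m_i$. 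Theorems \ref{ThMixing} and \ref{ThMixingjoint} then finish the proof.

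The only delicate step is the regular-variation/Karamata computation in part (i): one must avoid assuming any more regularity on the $q_k^{(i)}$ than is given and work with the tail $\overline{q_k}^{(i)}$ directly, e.g., by summing by parts and exploiting uniform upper bounds derived from the monotone regularly-varying tail. All remaining steps reduce to checking hypotheses and invoking the previously proved results of Section \ref{SecMixing}.
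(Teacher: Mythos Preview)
Your proof is correct and follows essentially the same route as the paper: both reduce to the mixing regime with $\beta=0$ and $Q=P-I$, verify hypothesis $(\mathsf H_{\mathrm{mix}})$(i) via a tail/Abel--Karamata computation for part (i) and a dominated-convergence argument for part (ii), and then invoke Theorems \ref{ThMixing} and \ref{ThMixingjoint}. The paper simply defers the two limit computations to the analogous ones in \cite{HM11} (Theorem~3 and Proposition~3) rather than sketching them as you do; note incidentally that your computation in (ii) correctly yields the drift $\sum_i \pi_i m_i$, which suggests the statement's $m=\sum_i m_i$ is a misprint for $m=\sum_i \pi_i m_i$.
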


\begin{proof} Having in mind the remark above the theorem, we now just need to properly apply Theorem \ref{ThMixingjoint} to both cases.
For point (i), we have to prove that, for $i\in\{1,\ldots,\kappa\}$, and $f$ a continuous function on $[0,1]$,
\[\frac{1}{n^{\gamma}}\sum_{k=0}^n\sum_{j=1}^{\kappa}\frac{P(i,j)q^{(i,j)}_{n-k}}{1-\overline{q_n}^{(i)}}\Big(1-\frac{k}{n}\Big)f\Big(\frac{k}{n}\Big) \underset{n \rightarrow \infty}\longrightarrow  a_i\gamma\int_0^1 f(x)(1-x)^{-\gamma}\mathrm d x,\]
We can restrict ourselves to the case where $f$ is continuously differentiable, and we end up with the same computation as in the proof of Theorem 3 in \cite{HM11}, part (i), we do not repeat it here.
Similarly, for point (ii), noticing that $(1-mt\vee0,t\geq0)$ is the Lamperti transform of the subordinator $(mt,t\geq0)$, we have to prove that
\[n\sum_{k=0}^n\sum_{j=1}^{\kappa}\frac{P(i,j)q^{(i,j)}_{n-k}}{1-\overline{q_n}^{(i)}}\Big(1-\frac{k}{n}\Big)f\Big(\frac{k}{n}\Big) \underset{n \rightarrow \infty}\longrightarrow  m_if(1).\]
By Proposition 3 of \cite{HM11}, we can restrict ourselves to $f(x)=\frac{1-x^{\lambda}}{1-x}$ for $\lambda>0$ (extended by $f(1)=\lambda$), in which case the proof, once again, bears no difference to that of part (ii) of Theorem 3 of \cite{HM11}.
\end{proof}

\bigskip

\noindent \textbf{Remark.} One could imagine various other models of Markov random walks with a barrier. For example, instead of conditioning the walk on not taking the $S$ component higher than $n$, we could have killed the walk the first time that $S$ exceeds $n$. Or we could have imagine a model where the types still form a Markov chain with transition matrix $P$ and we only condition the position component to not jump over $n$. The results one gets for these model have mostly the same flavor, and thus we do not present them here.


\section{Appendix} 
\label{Appendix}


\subsection{A few results on the Skorokhod topology}\label{sec:appsko}

We start with the proof of Lemma \ref{lampsko} and then settle a few lemmas useful for the proof of Lemma \ref{LemCoupe} and Lemma \ref{lem:mixchangescale}

\bigskip

\noindent\textbf{Proof of Lemma \ref{lampsko}.} For (i), notice first that, by standard arguments, since the $\tau_n$ are all increasing, we only need to show pointwise convergence. For $t<T_0(g)$ this is simple, since, given $f(\tau(t))>0$, the equation $\int_0^{\tau_n(t)} f_n(r)^{\alpha} \mathrm{d}r = t$ shows that $\tau_n(t)$ can not have any subsequential limit which is larger or smaller than $\tau(t)$. For $t\geq T_0(g)$ (such that $f(\tau(t))=0$), we have by definition $\tau(t)=T_0(f),$ and must then show that $\tau_n(t)\to T_0(f).$ It is a direct consequence of the Skorokhod convergence of $f_n$ to $f$ that $\liminf \tau_n(t)\geq T_0(f).$ For the limsup, let $a>T_0(f)$, assume that a subsequence of $\tau_n(t)$ is greater than $a$. Along this subsequence, we then have $\tau_n(t)-a\leq f_n(a)^{-\alpha}(t-\tau_n^{(-1)}(a))$, which implies $t\geq f_n(a)^{\alpha}(\tau_n(t)-a)$. However, since $a>T_0(f)$, $f_n(a)$ tends to $0$ and thus this implies that $\tau_n(t)$ converges to $a$, a contradiction since we could replace $a$ by $(a+T_0(f))/2$.

Point (ii) is then an easy consequence of point (i). Recall that there exists for $n\in\N$ a time--change $\lambda_n$ which converges uniformly on compact sets to the identity function, such that $f_n\circ\lambda_n$ converges uniformly to $f$ on compact sets. Then, letting $\mu_n=\tau_n^{(-1)}\circ\lambda_n\circ\tau$, $\mu_n$ also converges uniformly on compact sets to the identity, and $g_n\circ\mu_n$ converges uniformly on compact sets to $g$.
\qed

\bigskip

\smallskip

Now, for all pairs of functions $f,g$ in $\mathcal D\left([0,\infty),[0,\infty)\right)$ and all $t>0$, we define a new function $\verb"glue"^{[t]}(f,g)\in \mathcal D\left([0,\infty),[0,\infty)\right)$ as follows:
\begin{equation}
\label{glue}
\verb"glue"^{[t]}(f,g)(s)=f(s), \quad \forall s<t, \quad \verb"glue"^{[t]}(f,g)(s)=g(s-t), \quad \forall s \geq t.
\end{equation}

\begin{lem}
\label{lemSko1}
Assume that:
\begin{enumerate}
\item[$\bullet$] $f_n \rightarrow f$ in $\mathcal D\left([0,\infty),[0,\infty)\right)$, with $f_n$ non--increasing
\item[$\bullet$] $g_n \rightarrow g$ in $\mathcal D\left([0,\infty),[0,\infty)\right)$
\item[$\bullet$] $t_n \in \mathbb R_+\rightarrow t \in \mathbb R_+$
\item[$\bullet$] $f_n(t_n-) \rightarrow f(t-)$
\end{enumerate}
Then,L
$$
\verb"glue"^{[t_n]}(f_n,g_n) \longrightarrow \verb"glue"^{[t]}(f,g) \quad \text{in } \mathcal D\left([0,\infty),[0,\infty)\right).
$$
\end{lem}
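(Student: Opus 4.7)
The plan is to exhibit, for each $T>t$, a continuous increasing bijection $\mu_n:[0,T]\to[0,T]$ with $\|\mu_n-\mathrm{id}\|_\infty\to 0$ such that
\[
\sup_{s\in[0,T]}\,\Big|\,\mathrm{glue}^{[t_n]}(f_n,g_n)\circ\mu_n(s)-\mathrm{glue}^{[t]}(f,g)(s)\,\Big|\xrightarrow[n\to\infty]{}0.
\]
The key idea is to force $\mu_n(t)=t_n$. With this normalisation, the left piece of the glued function is transported onto the left piece, the right piece onto the right piece, and the potential jump at the gluing point is automatically matched, so uniform convergence (and hence Skorokhod convergence) is within reach.

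By the two given Skorokhod convergences, pick continuous increasing bijections $\lambda_n^f,\lambda_n^g:[0,T]\to[0,T]$, both converging to the identity in sup norm, with $f_n\circ\lambda_n^f\to f$ and $g_n\circ\lambda_n^g\to g$ uniformly on $[0,T]$. Fix $\delta>0$ with $t-\delta>0$ and $f$ continuous at $t-\delta$. For $n$ large enough that $\lambda_n^f(t-\delta)<t_n$, define $\mu_n$ to equal $\lambda_n^f$ on $[0,t-\delta]$, to be affine on $[t-\delta,t]$ sending $t-\delta$ to $\lambda_n^f(t-\delta)$ and $t$ to $t_n$, and to be an affine rescaling of $s\mapsto t_n+\lambda_n^g(s-t)$ mapping $[t,T]$ bijectively onto $[t_n,T]$. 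Because $\lambda_n^f(t-\delta)\to t-\delta$, $t_n\to t$ and $(T-t_n)/(T-t)\to 1$, the map $\mu_n$ is a continuous increasing bijection of $[0,T]$ with $\|\mu_n-\mathrm{id}\|_\infty\to 0$.

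On $[0,t-\delta]$ one has $\mathrm{glue}^{[t_n]}(f_n,g_n)\circ\mu_n=f_n\circ\lambda_n^f$, which tends uniformly to $f=\mathrm{glue}^{[t]}(f,g)$; on $[t,T]$ the composition equals $g_n$ evaluated at a reparametrisation of $\lambda_n^g(\cdot-t)$ close to the identity, tending uniformly to $g(\cdot-t)$. The delicate region, and the main obstacle I anticipate, is $[t-\delta,t)$: there $\mu_n(s)\in[\lambda_n^f(t-\delta),t_n)$, and since $f_n$ is non-increasing,
\[
f_n(t_n-)\ \leq\ f_n(\mu_n(s))\ \leq\ f_n(\lambda_n^f(t-\delta)).
\]
The hypothesis $f_n(t_n-)\to f(t-)$ handles the lower bound; the continuity of $f$ at $t-\delta$ together with the uniform convergence $f_n\circ\lambda_n^f\to f$ gives $f_n(\lambda_n^f(t-\delta))\to f(t-\delta)$ for the upper bound. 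The limit $f$ is itself non-increasing, being a Skorokhod limit of non-increasing functions, so $f(s)\in[f(t-),f(t-\delta)]$ on the same interval. Combining these,
\[
\sup_{s\in[t-\delta,t)}\,|f_n(\mu_n(s))-f(s)|\ \leq\ f(t-\delta)-f(t-)+o(1).
\]
Since $f$ is càdlàg at $t$, $f(t-\delta)-f(t-)\to 0$ as $\delta\to 0$; a standard diagonal extraction $\delta=\delta_n\to 0$ slowly enough then promotes this into uniform convergence on all of $[0,T]$. The hypothesis $f_n(t_n-)\to f(t-)$ is precisely what makes the sandwich above effective: without it, the lower bound on $f_n(\mu_n(s))$ near $t$ could fail to settle near $f(t-)$ and the argument would break down.
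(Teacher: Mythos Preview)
Your core idea---forcing $\mu_n(t)=t_n$ and sandwiching $f_n(\mu_n(s))$ on $[t-\delta,t)$ between $f_n(t_n-)$ and $f_n(\lambda_n^f(t-\delta))$ via the monotonicity of $f_n$---is correct and is precisely where the hypothesis $f_n(t_n-)\to f(t-)$ enters. There is, however, a gap on the right piece. Your affine rescaling on $[t,T]$ produces $\mu_n(s)-t_n=c_n\lambda_n^g(s-t)$ (or a similar perturbation) with $c_n\to 1$, and you assert that $g_n(c_n\lambda_n^g(s-t))\to g(s-t)$ uniformly. But Skorokhod convergence only guarantees uniform closeness of $g_n\circ\lambda_n^g$ to $g$ for \emph{that specific} $\lambda_n^g$; a further multiplicative perturbation can misalign jumps. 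For instance, with $g_n=g=\mathbf 1_{[1,\infty)}$, $\lambda_n^g=\mathrm{id}$ and $c_n=1+1/n$, one has $\|g_n(c_n\cdot)-g\|_\infty=1$ for all $n$. The fix is easy: take $\lambda_n^g$ to be an increasing bijection of $[0,\infty)$ (rather than of $[0,T]$) with $g_n\circ\lambda_n^g\to g$ and $\lambda_n^g\to\mathrm{id}$ locally uniformly, set $\mu_n(s)=t_n+\lambda_n^g(s-t)$ on $[t,\infty)$ with no rescaling, and drop the constraint $\mu_n(T)=T$. Then $g_n(\mu_n(s)-t_n)=g_n(\lambda_n^g(s-t))$ converges locally uniformly to $g(s-t)$, and your diagonal argument in $\delta$ goes through.

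For comparison, the paper avoids building time-changes altogether and instead invokes the sequential characterisation of Skorokhod convergence in Proposition~6.5, Chapter~3 of \cite{EK}: one checks that for each $s$ and each $s_n\to s$, the values $h_n(s_n)$ cluster only at $h(s)$ or $h(s-)$, with the appropriate consistency conditions. The only nontrivial point is $s=t$, where the paper splits into $s_n\ge t_n$ (giving $g_n(s_n-t_n)\to g(0)=h(t)$) and $s_n<t_n$ (using exactly your monotonicity sandwich to get $f_n(s_n)\to f(t-)=h(t-)$). That route is shorter once one knows the Ethier--Kurtz criterion, while yours is more self-contained.
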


\begin{proof}
We use for this Proposition 6.5, chapter 3 of \cite{EK}. Since $f_n$ and $g_n$ converge in the Skorokhod sense, it is easy to see that conditions (a),(b),(c) of this proposition are satisfied for every time $s \neq t$. For $s=t$, let $s_n \rightarrow t$. If $s_n \geq t_n$ for all $n$ large enough, then $h_n(s_n)=g_n(s_n-t_n) \rightarrow g(0)=h(t)$. If $s_n < t_n$ for all $n$ large enough, then $h_n(s_n)=f_n(s_n)$. Let $\varepsilon>0$ such that 
$
f_n(t-\varepsilon) \rightarrow f(t-\varepsilon)
$
(recall that this holds for every $\varepsilon>0$ such that $t-\varepsilon$ is not a jump time of $f$).
Since $f_n$ is non--increasing, we have $f_n(t_n-)\leq f_n(s_n) \leq f_n(t-\varepsilon)$ for $n$ large enough, hence
$$
f(t-) \leq \liminf_n f_n(s_n) \leq \limsup_n f_n(s_n)\leq f(t-\varepsilon).
$$
We conclude, by letting $\varepsilon \rightarrow 0$ along an appropriate subsequence, that $f_n(s_n) \rightarrow f(t-)=h(t-)$. Hence assertions (a),(b) and (c) of Proposition 6.5 are satisfied for $h_n,h$ and the result follows.
\end{proof}

\bigskip

\begin{lem}
\label{lemSko2}
Let $f_n,f$ be non--increasing non--negative c\`adl\`ag functions such that
$$
f_n \underset{n\rightarrow \infty}\longrightarrow f \text{ on } \mathcal D([0,\infty),[0,\infty)).
$$
Assume that $t_n=\inf \{s:f_n(s)=0\} \rightarrow t=\inf\{s:f(s)=0\}<\infty$. 
\begin{enumerate}
\item[$\mathrm{(i)}$] If moreover $f(t-)>0$ and $\liminf f_n(t_n-) >0$, then $f_n(t_n-) \rightarrow f(t-)$.
\item[$\mathrm{(ii)}$] If $f(t-)=0$, then $f_n(t_n-) \rightarrow 0=f(t-)$.
\end{enumerate}
\end{lem}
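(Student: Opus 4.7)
I would split the proof along the two cases, treating (ii) first as a warmup. Since $f$ is non-increasing with $f(t-)=f(t)=0$, $f$ is continuous at $t$ with value $0$, so it suffices to upper bound $f_n(t_n-)$ using a well-chosen point to the left of $t$. Concretely, given $\varepsilon>0$, I would pick a continuity point $s^{\ast}<t$ of $f$ close enough that $f(s^{\ast})<\varepsilon$; Skorokhod convergence at continuity points then gives $f_n(s^{\ast})<2\varepsilon$ for $n$ large, and since $t_n\to t>s^{\ast}$, non-increasingness of $f_n$ yields $f_n(t_n-)\leq f_n(s^{\ast})<2\varepsilon$. Sending $\varepsilon \to 0$ gives (ii).

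For (i), the plan is to use the time-change characterization of Skorokhod convergence. I would fix some $T>t$ and let $\lambda_n\colon[0,T]\to[0,T]$ be continuous strictly increasing bijections with $\sup_{s\leq T}|\lambda_n(s)-s|\to 0$ and $\sup_{s\leq T}|f_n(\lambda_n(s))-f(s)|\to 0$. The first step is to establish
\[
f_n(\lambda_n(t))\to f(t)=0
\quad \text{and}\quad
f_n(\lambda_n(t)-)\to f(t-)>0.
\]
The first is immediate from the uniform bound at $s=t$; for the second, I would use that $\lambda_n$ is continuous to write $\lim_{u\uparrow t}\lambda_n(u)=\lambda_n(t)$, and take the left limit $u\uparrow t$ inside $|f_n(\lambda_n(u))-f(u)|\leq \varepsilon_n$ to get $\lim_{u\uparrow t}f_n(\lambda_n(u))=f_n(\lambda_n(t)-)\to f(t-)$.

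The central step is then to compare $\lambda_n(t)$ with $t_n$ and rule out any discrepancy. If $\lambda_n(t)>t_n$, then $f_n\equiv 0$ on $[t_n,\infty)$ by non-increasingness, non-negativity and right-continuity, giving $f_n(\lambda_n(t)-)=0$; this contradicts $f_n(\lambda_n(t)-)\to f(t-)>0$ once $n$ is large enough. If instead $\lambda_n(t)<t_n$, non-increasingness forces $f_n(t_n-)\leq f_n(\lambda_n(t))\to 0$, contradicting $\liminf_n f_n(t_n-)>0$. Hence $\lambda_n(t)=t_n$ for all sufficiently large $n$, and the conclusion $f_n(t_n-)\to f(t-)$ follows from the second convergence above.

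The main obstacle is the exclusion of the case $\lambda_n(t)<t_n$, and this is exactly where the hypothesis $\liminf_n f_n(t_n-)>0$ is used: without it, $f_n$ could decrease gradually to $0$ on some interval after the large jump corresponding to the jump of $f$ at $t$, in which case $t_n$ would sit strictly to the right of $\lambda_n(t)$ and $f_n(t_n-)$ would tend to $0$ rather than to $f(t-)$.
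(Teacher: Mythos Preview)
Your proof is correct and follows essentially the same route as the paper's. Both arguments for (ii) pick a continuity point of $f$ just to the left of $t$ and use pointwise convergence there together with monotonicity. For (i), the paper invokes the standard fact that Skorokhod convergence yields times $s_n\to t$ with $f_n(s_n-)\to f(t-)$ and $f_n(s_n)\to f(t)$, whereas you derive this explicitly by setting $s_n=\lambda_n(t)$ via the time-change characterization; the subsequent trichotomy $s_n<t_n$, $s_n>t_n$, $s_n=t_n$ and the way each bad case is excluded are identical in both proofs.
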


However it is easy to build examples where $f_n(t_n-)\rightarrow 0$ whereas $f(t-)>0$ (e.g. $f(s)=\mathbbm 1_{\{s<1\}}$, $f_n(s)=\mathbbm 1_{\{s<1\}}+n^{-1}\mathbbm 1_{\{1\leq s<1+n^{-1}\}}$).

\begin{proof}
Case (i). By definition of the Skorokhod topology, we know that there is a sequence of times $s_n \rightarrow t$ such that $f_n(s_n-) \rightarrow f(t-)$ and $f_n(s_n)\rightarrow f(t)=0$. Note that since the functions $f_n$ are non--increasing and since  $\liminf f_n(t_n-) >0$, we necessarily have that $s_n \geq t_n$ for all $n$ large enough. On the other hand, if $s_n>t_n$, then $f_n(s_n-)=0$, and this is not possible for $n$ large enough since $f_n(s_n-) \rightarrow f(t-)>0$. So finally $s_n=t_n$ for all $n$ large enough and $f_n(t_n-) \rightarrow f(t-)$.

Case (ii). For all $\delta>0$, let $\varepsilon>0$ such that $f(t-\varepsilon) \leq \delta$ and $f_n(t-\varepsilon)\rightarrow f(t-\varepsilon)$ (such an $\varepsilon$ exists since $f_n(s)\rightarrow f(s)$ for a.e. $s$). Since the $f_n$ are non--increasing, this leads to $\limsup_n f_n(t_n-)\leq \delta$ for all $\delta>0$.
\end{proof}

\begin{lem}\label{lem:skotoL}
Suppose that $f_n$ and $f$ are c\`adl\`ag functions on $[0,1]$ such that $f_n$ converges to $f$ in the Skorokhod topology. Then $f_n$ also converges to $f$ in $L_1([0,1])$.
\end{lem}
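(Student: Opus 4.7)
The plan is to reduce $L_1$ convergence to an application of dominated convergence, exploiting two standard features of Skorokhod convergence on a compact interval.

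First I would recall the classical fact that if $f_n \to f$ in the $J_1$-Skorokhod topology on $[0,1]$, then $f_n(t) \to f(t)$ at every point $t$ where $f$ is continuous. Since $f$ is càdlàg, its set of discontinuities $D_f$ is at most countable, hence of Lebesgue measure zero; consequently $f_n \to f$ almost everywhere on $[0,1]$.

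Next I would check that the sequence is uniformly bounded in sup norm. By definition of Skorokhod convergence there exist homeomorphisms $\lambda_n$ of $[0,1]$ onto itself such that $\sup_{t\in[0,1]}|f_n(\lambda_n(t))-f(t)| \to 0$. In particular, for $n$ large,
\[
\sup_{s\in[0,1]} |f_n(s)| = \sup_{t\in[0,1]} |f_n(\lambda_n(t))| \le \sup_{t\in[0,1]}|f(t)| + 1,
\]
and $\sup_{t\in[0,1]}|f(t)|$ is finite because $f$ is càdlàg on a compact interval. Thus $M := \sup_n \|f_n\|_\infty + \|f\|_\infty$ is finite.

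Finally, since $|f_n - f| \le 2M$ on $[0,1]$, which has finite Lebesgue measure, and $f_n \to f$ almost everywhere, the dominated convergence theorem yields $\int_0^1 |f_n - f|\,\mathrm dt \to 0$, which is exactly $L_1([0,1])$ convergence. There is essentially no obstacle here; the only subtlety is the uniform sup-norm bound, which is immediate once one writes down the time-change characterisation of Skorokhod convergence.
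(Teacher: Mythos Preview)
Your argument is correct. The two ingredients you invoke—pointwise convergence of $f_n$ to $f$ at continuity points of $f$, and the eventual uniform sup-norm bound coming from the time-change characterisation—are both standard, and dominated convergence then finishes the job cleanly.

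The paper proceeds differently: rather than citing the a.e.\ pointwise convergence, it sandwiches $f_n$ directly between the $\varepsilon$-envelopes $\underline{f}(\cdot,\varepsilon)-\varepsilon$ and $\overline{f}(\cdot,\varepsilon)+\varepsilon$ (where $\overline{f}(x,\varepsilon)=\sup_{|y-x|\le\varepsilon}f(y)$ and similarly for $\underline{f}$), and then lets the envelopes collapse to $f$ in $L_1$ via monotone convergence as $\varepsilon\to 0$. Your route is arguably more transparent, since it isolates the two facts (a.e.\ convergence, uniform bound) and applies DCT in one line; the paper's sandwich argument is more self-contained in that it does not appeal to the ``Skorokhod convergence implies convergence at continuity points'' lemma, but at the cost of a slightly more hands-on computation. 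Both ultimately rest on the same underlying observation that a c\`adl\`ag limit has only countably many discontinuities.
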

\begin{proof}
Let $\veps>0$. We know that, for $n$ large enough, there exists a continuous and increasing time--change $\tau_n$ such that $|\tau_n(x)-x|\leq \veps$ and $|f_n(x)-f(\tau_n(x))|\leq \veps$ for all $x\in[0,1].$ Letting $\overline{f}(x,\veps)=\underset{|y-x|\leq \veps} \sup f(y)$ and $\underline{f}(x,\veps)=\underset{|y-x|\leq \veps} \inf f(y),$
we then have
\[\underline{f}(\cdot,\veps)-\veps \leq f_n\leq \overline{f}(\cdot,\veps)+\varepsilon.\]
The proof is ended by noting that both $\underline{f}(\cdot,\veps)$ and $\overline{f}(\cdot,\veps)$ converge in $L_1$ to $f$ by the monotone convergence theorem, since $f$ is bounded and has countably many discontinuities.
\end{proof}
\subsection{Weak convergence in probability of measures}\label{sec:cvprobamesures}

The notion of weak convergence of finite measures on $[0,\infty)$ can be metrized by the Prokhorov metric, defined by 
\[d (\mu, \nu) := \inf \left\{ \varepsilon > 0 ~|~ \mu(A) \leq \nu (A^{\varepsilon}) + \varepsilon \ \text{and} \ \nu (A) \leq \mu (A^{\varepsilon}) + \varepsilon \ \text{for all} \ A \in \mathcal{B}(M) \right\},\]
where $\mu$ and $\nu$ are two finite measures on $[0,\infty)$ and $A^{\veps}$ denotes the $\veps$--enlargement of $A$.

With this metric then comes a notion of convergence in probability for random measures. We list here a few elementary properties which are of use. In all three upcoming lemmas, $(\mu_n,n\in\N)$ and $\mu$ are some random finite measures on  $[0,\infty).$

\begin{lem}\label{lem:extraction} As $n$ tends to infinity, $\mu_n$ converges in probability to $\mu$ if and only if, for any subsequence of $(\mu_n,n\in\N)$, one can extract another subsequence which converges a.s. to $\mu$.
\end{lem}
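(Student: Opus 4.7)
The plan is to apply the standard subsequence principle for convergence in probability, which holds in any metric space. Since the space of finite measures on $[0,\infty)$ is metrized by the Prokhorov metric $d$, the quantity $d(\mu_n,\mu)$ is a nonnegative real random variable, and ``$\mu_n \to \mu$ in probability'' is by definition ``$d(\mu_n,\mu)\to 0$ in probability''. So everything reduces to the classical statement about real random variables.

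For the forward direction, I would assume $\mu_n \to \mu$ in probability and let $(\mu_{\phi(n)})$ be an arbitrary subsequence. Then $d(\mu_{\phi(n)},\mu) \to 0$ in probability as well, so I can extract a further subsequence $(\mu_{\phi(\psi(n))})$ such that $\pr(d(\mu_{\phi(\psi(n))},\mu) > 2^{-n}) \leq 2^{-n}$. By the Borel--Cantelli lemma, almost surely $d(\mu_{\phi(\psi(n))},\mu) \leq 2^{-n}$ for all $n$ large enough, which gives a.s. convergence of $\mu_{\phi(\psi(n))}$ to $\mu$ in the Prokhorov metric, i.e. weak convergence.

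For the reverse direction, I would argue by contrapositive. Assume $\mu_n$ does not converge to $\mu$ in probability. Then there exist $\varepsilon>0$ and a subsequence $(\mu_{\phi(n)})$ such that $\pr(d(\mu_{\phi(n)},\mu) > \varepsilon) > \varepsilon$ for every $n$. Any further subsequence $(\mu_{\phi(\psi(n))})$ then also satisfies this property, hence cannot converge in probability to $\mu$, and in particular cannot converge a.s. to $\mu$ (since a.s.\ convergence implies convergence in probability for the metric $d$). This contradicts the hypothesis and finishes the proof.

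There is no real obstacle here: the only thing worth emphasising is that, because the Prokhorov metric is a genuine metric on the space of finite measures and the convergence in probability in the statement is defined via this metric, the lemma is a formal consequence of the well-known subsequence principle in metric spaces and requires no additional argument specific to measures. I would therefore write the proof compactly in this form.
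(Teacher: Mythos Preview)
Your proof is correct and is exactly the standard argument the paper has in mind: the paper does not write out a proof at all but simply remarks that ``this is classical, and in fact true for random variables in any metric space, not just random measures.'' Your argument via the Borel--Cantelli lemma in one direction and contrapositive in the other is the canonical justification of this fact.
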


This is classical, and in fact true for random variables in any metric space, not just random measures. 
The next lemma is just a consequence of the fact that continuous maps preserve convergence in distribution.
\begin{lem}\label{lem:cvprobaintegration} Assume that $\mu_n$ converges in probability to $\mu$ and let $f$ be a continuous and bounded function on $[0,\infty)$. Then $\int_{[0,\infty)} f(x) \mathrm d \mu_n(x)$ converges in probability to $\int_{0,\infty} f(x) \mathrm d \mu(x)$
\end{lem}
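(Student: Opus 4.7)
The plan is to reduce this to the deterministic statement that weak convergence of finite measures is preserved by integration against bounded continuous functions, and then to transfer this from almost sure to in--probability convergence using the preceding subsequence characterization (Lemma \ref{lem:extraction}).

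More precisely, I would first recall the basic fact that the Prokhorov distance $d$ metrizes weak convergence of finite measures on $[0,\infty)$: for deterministic finite measures $\nu_n,\nu$, $d(\nu_n,\nu)\to 0$ is equivalent to $\int g\,\mathrm d \nu_n\to \int g\,\mathrm d \nu$ for every bounded continuous $g:[0,\infty)\to\mathbb R$. In particular, the map $\Phi_f:\nu \mapsto \int f\,\mathrm d \nu$ is continuous from $(\mathcal M_f([0,\infty)),d)$ to $\mathbb R$, for our fixed bounded continuous $f$.

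To deduce convergence in probability of $\Phi_f(\mu_n)$ to $\Phi_f(\mu)$, I would use Lemma \ref{lem:extraction}. Given any subsequence $(\mu_{n_k})$, Lemma \ref{lem:extraction} yields a further subsequence $(\mu_{n_{k_j}})$ converging almost surely (in Prokhorov distance) to $\mu$. By the continuity of $\Phi_f$ recalled above, $\Phi_f(\mu_{n_{k_j}}) \to \Phi_f(\mu)$ almost surely. Applying Lemma \ref{lem:extraction} in the reverse direction, this time to the real--valued random variables $\Phi_f(\mu_n)$ (with the usual metric on $\mathbb R$), we conclude that $\Phi_f(\mu_n)\to\Phi_f(\mu)$ in probability, which is the desired statement.

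There is no real obstacle here: the only thing to invoke is that $d$ metrizes weak convergence, which is standard, together with the double--use of the subsequence criterion of Lemma \ref{lem:extraction}. Alternatively, one could appeal directly to the continuous mapping theorem for convergence in probability in separable metric spaces, but routing through Lemma \ref{lem:extraction} is more in line with the style already adopted in this appendix and keeps the argument fully self--contained.
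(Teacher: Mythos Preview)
Your proposal is correct and is essentially the same approach as the paper's: the paper simply remarks that the lemma ``is just a consequence of the fact that continuous maps preserve convergence in distribution'' (i.e.\ the continuous mapping theorem, here applied to convergence in probability via the map $\nu\mapsto\int f\,\mathrm d\nu$), and you have spelled this out explicitly using the subsequence criterion of Lemma~\ref{lem:extraction}. Your version is more detailed but not genuinely different.
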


We end with a partial variant of the Portmanteau theorem in probability.
\begin{lem}\label{lem:interversion}
Assume that, for all $t\geq 0$, $\mu_n([0,t])$ converges in probability to $\mu([0,t])$ as $n$ tends to infinity. Then $\mu_n$ converges in probability to $\mu$.
\end{lem}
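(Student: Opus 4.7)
My approach is to reduce the claim to a purely deterministic Portmanteau-type statement via the subsequence criterion for convergence in probability, i.e.\ Lemma \ref{lem:extraction}. Concretely, it is enough to show that from any subsequence of $(\mu_n)_{n\in\N}$ we can extract a further sub--subsequence that converges almost surely, in the Prokhorov distance, to $\mu$. Fixing such a subsequence and relabeling, I only need to produce an a.s.\ convergent extraction.

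The heart of the argument is a diagonal extraction. Let $D$ be a countable dense subset of $[0,\infty)$, for instance $D=\mathbb{Q}_+\cup\{0\}$. By the hypothesis, for each $t\in D$ the sequence $\mu_n([0,t])$ converges in probability to $\mu([0,t])$, so by successively extracting subsequences and taking a diagonal, one obtains a subsequence $(\mu_{n_k})$ along which, on a full--measure event $\Omega_0$,
\[
F_k(t):=\mu_{n_k}([0,t])\ \xrightarrow[k\to\infty]{}\ F(t):=\mu([0,t])\qquad\text{for every }t\in D.
\]
On this event, both $F_k$ and $F$ are non--decreasing and right--continuous, and $F$ has at most countably many discontinuities. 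A standard squeezing argument (for any continuity point $t$ of $F$ and any $\veps>0$, choose $t_-,t_+\in D$ with $t_-<t<t_+$ such that $F(t_+)-F(t_-)<\veps$, and use monotonicity $F_k(t_-)\leq F_k(t)\leq F_k(t_+)$) upgrades the convergence to $F_k(t)\to F(t)$ at every continuity point of $F$.

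At this point the classical (deterministic) Portmanteau theorem for finite measures on $[0,\infty)$ yields that $\mu_{n_k}$ converges weakly to $\mu$ on $\Omega_0$, hence in the Prokhorov metric, which is exactly what is needed to conclude via Lemma \ref{lem:extraction}. The one delicate point, and what I expect to be the main obstacle, is the control of the total mass: Prokhorov convergence of finite measures requires $\mu_{n_k}([0,\infty))\to\mu([0,\infty))$, and pointwise convergence of the distribution functions on $D$ only directly gives $\liminf_k \mu_{n_k}([0,\infty))\ge F(t)$ for every $t\in D$, hence $\ge\mu([0,\infty))$ after letting $t\uparrow\infty$ along $D$. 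The reverse inequality has to be argued from the hypothesis: applying it with $t$ so large that $F(t)$ is within $\veps$ of the total mass of $\mu$, and using that (in the applications, and implicitly in the statement) the measures $\mu_n$ are supported in a common compact interval so that $\mu_n([0,t])=\mu_n([0,\infty))$ eventually, closes the loop. In the concrete use made in Section~\ref{sec:endmixing}, where the relevant measures live on $[0,t\wedge S_{n,\veps}]$ with $S_{n,\veps}\to S_\veps$ a.s., this tightness is automatic and no extra work is needed.
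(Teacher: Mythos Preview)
Your approach is essentially identical to the paper's: reduce via Lemma~\ref{lem:extraction} to a deterministic statement, perform a diagonal extraction over the rationals to obtain a.s.\ convergence of $\mu_{n_k}([0,t])\to\mu([0,t])$ there, extend by monotonicity to all continuity points of $t\mapsto\mu([0,t])$, and conclude by Portmanteau.

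You go further than the paper in flagging the total-mass issue: Prokhorov convergence of finite (not necessarily probability) measures requires $\mu_{n_k}([0,\infty))\to\mu([0,\infty))$, and distribution-function convergence alone does not give this (take $\mu_n=\delta_n$, $\mu=0$). The paper's proof simply invokes Portmanteau without addressing this, so as literally stated the lemma needs a tightness assumption. Your observation that in the sole application (proving~\eqref{eq:melangebeta}) the measures are supported in $[0,S_{n,\veps}]$ with $S_{n,\veps}\to S_\veps$ a.s., so that tightness is automatic, is correct and closes the gap.
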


\begin{proof}
We use Lemma \ref{lem:extraction}: let $(\nu_n,n\in\N)$ be an extracted subsequence, and we will extract from it a subsequence which converges a.s. to $\mu$. Let $(t_k,k\in\N)$ be an enumeration of the nonnegative rational numbers. We then let $\sigma_1$ be an extraction such that $\nu_{\sigma_1(n)}([0,t_1])$ converges a.s. to $\mu([0,t_1])$, then extract $\sigma_2$ from $\sigma_1$ such that $\nu_{\sigma_2(n)}([0,t_2])$ converges a.s. to $\mu([0,t_2]),$ and so on: for all $k$, $\sigma_{k}$ is an extraction such that $\nu_{\sigma_k(n)}([0,t_i])$ converges a.s. to $\mu([0,t_i])$ for all $i\leq k$. We then do a diagonal extraction and let $\sigma(n)=\sigma_n(n)$, and we then get that, for all rational $t$, $\nu_{\sigma(n)}([0,t])$ converges a.s. to $\mu([0,t])$. Now for irrational $t$, we get by monotonicity arguments
\[\mu([0,t))\leq\liminf \nu_{\sigma(n)}([0,t]) \leq \limsup \nu_{\sigma(n)}([0,t])\leq \mu([0,t]). \]
Thus, if $t$ is a continuity point of $\mu([0,\cdot])$, $\nu_{\sigma(n)}([0,t])$ converges to $\mu([0,\cdot])$, and by the Portmanteau theorem, $ \nu_{\sigma(n)}$ converges a.s. to $\mu$. This ends the proof.
\end{proof}

\subsection{Wald's formula}

We use the following variant of Wald's formula:
\begin{lem}\label{wald}
 Let $X_n,n\geq 1$ be real--valued random variables, $N$ a random integer, and assume that there exists $a\geq 0$ such that, for all $n$, $\mathbb{E}[X_n \mid N\geq n]\geq a$, then
\[\mathbb{E}\left[\sum_{i=1}^N X_i\right]\geq a\E[N].\]
This stays true if we swap $\leq$ for $\geq$.
\end{lem}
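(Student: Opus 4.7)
The plan is to prove this by the standard decomposition $\sum_{i=1}^N X_i = \sum_{i=1}^\infty X_i \mathbbm{1}_{\{N\geq i\}}$, interchange sum and expectation, and then apply the conditional lower bound term by term. The key identity I would first write is
\[
\mathbb{E}\Big[\sum_{i=1}^N X_i\Big]=\mathbb{E}\Big[\sum_{i=1}^\infty X_i\mathbbm{1}_{\{N\geq i\}}\Big]=\sum_{i=1}^\infty \mathbb{E}\big[X_i\mathbbm{1}_{\{N\geq i\}}\big],
\]
where the interchange is valid either by Tonelli (when the $X_i$ are nonnegative, which is the case in our applications since we apply the lemma to waiting times and to absolute differences of waiting times) or, in general, under whatever mild integrability is needed to make both sides well defined.

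Next, I would use the hypothesis $\mathbb{E}[X_n\mid N\geq n]\geq a$. Since $\{N\geq i\}$ is exactly the conditioning event appearing in the hypothesis, each term satisfies
\[
\mathbb{E}[X_i\mathbbm{1}_{\{N\geq i\}}]=\mathbb{E}[X_i\mid N\geq i]\,\mathbb{P}(N\geq i)\geq a\,\mathbb{P}(N\geq i).
\]
Summing over $i\geq 1$ and recalling the classical identity $\mathbb{E}[N]=\sum_{i\geq 1}\mathbb{P}(N\geq i)$ for nonnegative integer-valued $N$ then yields $\mathbb{E}[\sum_{i=1}^N X_i]\geq a\mathbb{E}[N]$, which is exactly the wanted inequality. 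The reverse inequality (with $\leq$ replacing $\geq$) follows by the exact same argument applied to $-X_n$, or simply by reading the chain of equalities and inequalities in the opposite direction.

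The only subtlety, and thus the main obstacle, is justifying the interchange of the infinite sum and the expectation. In the uses made of the lemma in the paper the $X_i$ are nonnegative (waiting times, or absolute values) so Tonelli applies immediately. If one wants a general statement for real-valued $X_i$, one can split $X_i=X_i^+ - X_i^-$ and apply the nonnegative case to each part, assuming enough integrability (e.g.\ $\mathbb{E}[\sum_{i=1}^N |X_i|]<\infty$, or the analogous lower bound $\mathbb{E}[|X_n|\mid N\geq n]\leq c$) so that the two sums obtained are not of the form $\infty-\infty$. In short, this is a minor technical point rather than a real obstacle, and the proof is essentially one line once the interchange is granted.
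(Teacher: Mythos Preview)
Your proof is correct and follows exactly the same approach as the paper: decompose $\sum_{i=1}^N X_i=\sum_{i\geq 1}X_i\mathbbm{1}_{\{N\geq i\}}$, use $\mathbb{E}[X_i\mathbbm{1}_{\{N\geq i\}}]\geq a\,\mathbb{P}(N\geq i)$, and sum to $a\,\mathbb{E}[N]$. The paper's proof is just the one-line version of what you wrote, without the discussion of the interchange.
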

\begin{proof} Just notice that
$\mathbb{E}\big[\sum_{i=1}^N X_i\big]=\sum_{i=1}^{\infty} \mathbb{E}\left[X_i\mathbbm{1}_{\{N\geq i\}}\right] \geq\sum_{i=1}^{\infty} a\pr[N\geq i] \geq a\E[N].$
\end{proof}
\bibliographystyle{siam}
\addcontentsline{toc}{section}{References}
\bibliography{frag}

\end{document}